\newcommand{\mathsym}[1]{{}}
\newtheorem{thm}{Theorem}[section]
\newtheorem{cor}[thm]{Corollary}
\newtheorem{lem}[thm]{Lemma}
\newtheorem{prop}[thm]{Proposition}
\theoremstyle{definition}
\newtheorem{defn}{Definition}[section]
\numberwithin{equation}{section}
\theoremstyle{remark}
\theoremstyle{example}
\newtheorem{exa}{Example}[section]
\newcommand{\md }{\mathrm{d}}
\newcommand{\be}{\begin{equation}}
\newcommand{\ee}{\end{equation}}
\newcommand{\bag}{\begin{eqnarray}}
\newcommand{\eag}{\end{eqnarray}}
\newcommand{\ban}{\begin{eqnarray*}}
\newcommand{\ean}{\end{eqnarray*}}
\newcommand{\ba}{\begin{aligned}}
\newcommand{\ea}{\end{aligned}}
\newcommand{\bpf}{\begin{proof} }
\newcommand{\epf}{\end{proof} }
\newcommand{\ric}{\mathrm{Ric}}
\newcommand{\tr}{\mathrm{tr}}
\newcommand{\dbar}{\overline{\partial}}
\begin{document}

\title{The Dirichlet Problem of Fully Nonlinear Equations on Hermitian Manifolds}
\author{Ke Feng}
\address{School of Mathematical Sciences, Peking University, Beijing 100871, China}
\email{kefeng@math.pku.edu.cn}
\author{Huabin Ge$^*$}
\address{School of Mathematics, Renmin University of China, Beijing, 100872, P.R. China}
\email{hbge@bjtu.edu.cn}
\author{Tao Zheng}
\address{School of Mathematics and Statistics, Beijing Institute of Technology, Beijing 100081, China}
\email{zhengtao08@amss.ac.cn}
\subjclass[2010]{53C55, 35J25, 35J60, 32W20, 58J05, 58J32}
\thanks {$^*$Supported by National Natural Science Foundation of China (NSFC) grant No. 11871094}
\keywords{Dirichlet problem, Hessian quotient equation, $(m-1,m-1)$-Hessian quotient equation,  (strongly) Gauduchon metric, $(m-1)$-plurisubharmonic function}
\begin{abstract}
We study the Dirichlet problem of a class of fully nonlinear elliptic equations on Hermitian manifolds and derive a priori $C^2$ estimates which depend on the initial data on manifolds, the admissible subsolutions and the upper bound of the gradients of the solutions. In some special cases, we  obtain the gradient estimates, and hence we can solve the corresponding Dirichlet problem with admissible subsolutions. We also study the Hessian quotient equations and $(m-1,m-1)$-Hessian quotient equations on compact Hermitian manifolds without boundary.
\end{abstract}
\maketitle

\section{Introduction}\label{secintro}
Let $(M,J,g)$ be a compact Hermitian manifold with smooth boundary, $\dim_{\mathbb{C}}M=m$, and the canonical complex structure $J$, where $g$ is the Hermitian metric, i.e., $g(JX,JY)=g(X,Y)$ for any vector fields $X,\,Y\in \mathfrak{X}(M)$. Then we can define a real $(1,1)$ form $\omega$  by
\begin{equation*}
\omega(X, Y):=g(JX, Y),\quad \forall\;X,\,Y\in \mathfrak{X}(M).
\end{equation*}
This form $\omega$ is determined uniquely by $g$ and vice versa, and hence we will not distinguish the terms in the following.

Fix a real $(1,1)$ form $\chi$ which is not necessarily positive definite.  Let $W (\mathrm{d}u)$ be a real $(1,1)$ form which depends linearly on $\mathrm{d}u$. Then for any $u\in C^2(M,\mathbb{R}),$ we
define a new real $(1,1)$ form $\vartheta$ by
\begin{equation}
 \label{defnvartheta}
 \vartheta := \chi  +\sqrt{-1}\partial\overline{\partial} u  + W(\mathrm{d}u).
 \end{equation}
Note that we do not assume that $\vartheta$ is positive definite, and the form $\vartheta$ defines an endomorphism $\vartheta^{\flat}$ of $T^{1,0}M$ which is Hermitian with respect to $\omega,$ i.e.,
\begin{equation}
\label{flat}
\omega\left(X,  \overline{\vartheta^{\flat}(Y)}\right)=\omega(\vartheta^{\flat}(X), \bar Y)=\vartheta(X,\bar Y), \quad\forall\;X,\,Y\in \Gamma(T^{1,0}M),
\end{equation}
where $\Gamma(\bullet)$ denotes the set of smooth sections of vector bundle $\bullet.$

In the following, we denote by $\lambda(\vartheta^\flat)$ the $m$-tuple of eigenvalues of $\vartheta^\flat$ (i.e., the eigenvalues of $\vartheta$ with respect to the Hermitian metric $\omega$), and use \eqref{flat} as the definition of the operator ${}^{\flat}$. Note that for any real symmetric section $B$ of $T^*M\otimes T^*M$, one infers that
\begin{equation}
\label{realflat}
g(B^\flat(X),Y):=B(X,Y),\quad \forall\;X,\,Y\in\mathfrak{X}(M).
\end{equation}
For convenience, we use the notation
\begin{equation}
 \label{varthetav}
\vartheta_v:=\chi +\sqrt{-1}\partial\bar\partial v+W (\mathrm{d} v),\quad \forall\; v\in C^2(M,\mathbb{R}).
\end{equation}

Given $h\in C^\infty(M,\mathbb{R})$ and $\varphi\in C^\infty(\partial M,\mathbb{R}),$ we study the Dirichlet problem for $\vartheta_u$ given by
\begin{align}
\label{cma}
F (\vartheta_u^{\flat})
=f (\lambda (\vartheta_u^{\flat} ) )=&h,  \quad\text{on}\;\; M, \\
\label{cmabv}
u=&\varphi,  \quad \text{on}\;\; \partial M,
\end{align}
where $W(\mathrm{d}u)$ has a special structure for later applications, and $f$ is a smooth symmetric function of the eigenvalues of $\vartheta_u^{\flat}.$

The Dirichlet problem has been extensively studied since the work of Ivochkina \cite{ivochkina1980} and Caffarelli, Nirenberg  \&  Spruck \cite{cnsacta}; see for example \cite{chouwang2001,guan1998,guan2014,trudinger1995,wang1994,guanli2013advance,guanarxiv2014,lisongying2004,collinspicardarxiv1909,yuanarxiv2020}. We refer to \cite{phongsongsturm2012} for recent progress and further references on this  subject.

We suppose that $f$ is defined on an open symmetric cone $\Gamma\subsetneqq\mathbb{R}^m$, with vertex at the origin $\mathbf{0}$, and that the cone $\Gamma$ satisfies that $\Gamma\supset\Gamma_m:=\left\{(\lambda_1,\dots,\lambda_m)\in\mathbb{R}^m:\;\lambda_j>0,\;1\leq j\leq m\right\}.$
 For instance, we take (see \cite{spruck}) $\Gamma$ as the standard $k$-positive cone $\Gamma_k\subset\mathbb{R}^m$ given by
\begin{equation*}
\Gamma_k:=\{\lambda\in\mathbb{R}^m:\;\sigma_j(\lambda)>0,\;j=1,\cdots,k\},\quad 1\leq k\leq m,
\end{equation*}
where $\sigma_j$ is the $j^{\mathrm{th}}$ elementary symmetric polynomial defined on $\mathbb{R}^m$  given by
\begin{equation*}
\sigma_j(\lambda)=\sum_{1\leq i_1<\cdots<i_j\leq m} \lambda_{i_1}\cdots \lambda_{i_j},\quad
\forall\;\lambda=(\lambda_1,\dots,\lambda_m)\in\mathbb{R}^m,\quad 1\leq j\leq m.
\end{equation*}
The assumptions on the cone $\Gamma$ also yield that (see \cite{cnsacta})
\begin{equation}
\label{cone}
\Gamma\subset\Gamma_1=\left\{\lambda=(\lambda_1,\cdots,\lambda_m)\in\mathbb{R}^m:\,\sum_{j=1}^m\lambda_j>0\right\}.
\end{equation}
In addition, $f$ satisfies
\begin{enumerate}
\item \label{assum1}$f$ is a concave function and $f_j:=\partial f/\partial \lambda_j>0$ for any $j=1,\cdots,m;$
\item \label{assum2}there holds $\sup_{\partial\Gamma}f<\inf_M h,$ where
\begin{equation*}
\sup_{\partial\Gamma}f:=\sup_{\lambda'\in\partial\Gamma}\limsup_{\Gamma\ni\lambda\to\lambda'}f(\lambda);
    \end{equation*}
\item \label{assum3}for any $\sigma$ with $\sigma<\sup_\Gamma f$ and $\lambda\in\Gamma$, we have
\begin{equation*}
    \lim_{t\to+\infty}f(t\lambda)>\sigma.
\end{equation*}
\end{enumerate}
Given Assumption \eqref{assum3}, the concavity of $f$ yields that (see for example \cite{cnsacta})
\begin{equation}
 \label{filambdai}
\sum_{i=1}^mf_i\lambda_i\geq 0.
\end{equation}
 The complex setup is very different from the real analogy (see for example \cite{guanjiao2014}) because of two different types of Hessian of functions. For general $W(\mathrm{d}u),$ it seems hard to control $|\nabla_p\nabla_qu|_g^2$ (see Section \ref{secpreliminary} for the notations) for the second order estimate in the complex setup. To overcome this difficulty, we deal with some special $W(\mathrm{d}u).$ For this aim, we define a bijection
$$
T:\,\mathbb{R}^m\to\mathbb{R}^m,\quad \lambda=(\lambda_1,\cdots,\lambda_m)\mapsto
\mu=(\mu_1,\cdots,\mu_m),
$$
where $\mu_i=T_i(\lambda)=\frac{1}{m-1}\sum_{k\neq i}\lambda_k,\; 1\leq i\leq m.$ If we can rewrite $f$ as
\begin{equation}
\label{stwacta2017equ2.8}
f(\lambda)=\tilde f(T(\lambda)),
\end{equation}
where $\tilde f$ defined on an open symmetric convex cone $\tilde \Gamma  \subsetneq\mathbb{R}^m$ containing $\Gamma_m$ satisfies
\begin{enumerate}
\item  $\tilde f$ is a concave function and $\tilde f_j:=\partial \tilde f/\partial \mu_j>0$ for each $j=1,\cdots,m;$
\item  there holds $\sup_{\partial\tilde\Gamma}\tilde f<\inf_M h,$ where
\begin{equation*}
\sup_{\partial\Gamma}\tilde f:=\sup_{\mu'\in\partial\tilde\Gamma}\limsup_{\tilde\Gamma\ni\mu\to\mu'}\tilde f(\mu);
    \end{equation*}
\item  for any $\sigma$ with $\sigma<\sup_{\tilde\Gamma} \tilde f$ and $\mu\in\tilde\Gamma$, we have
\begin{equation*}
    \lim_{t\to+\infty}\tilde f(t\mu)>\sigma,
\end{equation*}
\end{enumerate}
then it follows from \cite{stw1503} that  $f$ given by \eqref{stwacta2017equ2.8} satisfies Assumption \eqref{assum1}, \eqref{assum2} and \eqref{assum3}.

In this case, we define an operator depending only on the fixed metric $\omega$ by
\begin{equation}
 \label{defnpomega} \tilde{\vartheta}_u := P_\omega(\vartheta_u) = \frac{1}{m-1}\Big( (\mathrm{tr}_\omega\vartheta_u)
  \omega - \vartheta_u\Big).
  \end{equation}
If $\omega$ is the Euclidean metric on $\mathbb{C}^m,$ then the condition $P_{\omega}(\sqrt{-1}\partial\overline{\partial}u) \geq 0$ is equivalent to saying that $u$ is $(m-1)$-plurisubharmonic, in the sense of Harvey \& Lawson \cite{harveylawson2011}.

For any function $u\in C^2(M,\mathbb{R}),$ we define
 \begin{equation*}
 \Delta u:=\frac{m\sqrt{-1}\partial\bar\partial u\wedge\omega^{m-1}}{\omega^m}.
 \end{equation*}
 Then one infers that
\begin{equation}
 \label{eq:u6}\tilde{\vartheta}_u= \tilde{\chi}  + \frac{1}{m-1}\Big(
  (\Delta u)\omega -  \sqrt{-1}\partial\bar\partial u\Big) + Z(\mathrm{d}u),
  \end{equation}
where
\begin{align}
\tilde{\chi}  =& P_\omega(\chi),\nonumber\\
 \label{eq:c1} Z(\mathrm{d}u):=& P_\omega(W(\mathrm{d}u))= \frac{1}{m-1}\Big( (\mathrm{tr}_\omega
  W(\mathrm{d}u))\omega - W(\mathrm{d}u)\Big).
 \end{align}
Note that we can also write $W(\mathrm{d}u)$ explicitly in terms of $Z(\mathrm{d}u)$
 \begin{equation}
 \label{eq:cz} W(\mathrm{d}u) = \big(\mathrm{tr}_{\omega} Z(\mathrm{d}u)\big) \omega - (m-1) Z(\mathrm{d}u).
 \end{equation}
If $f$ can be rewritten as \eqref{stwacta2017equ2.8}, then we assume that  the form $Z(\mathrm{d}u)$ satisfies
 \begin{enumerate}
 \item\label{zu1}
 in the local holomorphic coordinate system $(U;z_1,\dots,z_m)$ near any point,  one has
 \begin{equation}\label{ZZZ}
Z_{i\bar{j}}=Z_{i\bar{j}}^p u_p +\overline{Z_{j\bar{i}}^p}u_{\bar p},
\end{equation}
for some tensor $Z_{i\bar{j}}^p$, independent of $u;$
 \item\label{zu2} in orthonormal holomorphic coordinate system for $\omega$ at any given point, the component
$Z_{i\bar j}$ is independent of $u_{\bar i}$ and $u_j$ (i.e., $Z_{i\bar{j}}^j=0$ for all $i,j$), and
$\nabla_{i}Z_{i\bar{i}}$ is independent of $u_{\bar{i}}$ (i.e.,  $\nabla_{\bar{i}} Z_{i\bar{i}}^i=0$ for all $i$).
 \end{enumerate}
The structure of the gradient term in \eqref{eq:c1} first appears in  Sz\'ekelyhidi, Tosatti \& Weinkove \cite{stw1503} to solve the Gauduchon conjecture on compact Hermitian manifolds without boundary in which the structure plays  a key role in the estimate of the complex Hessian of the solutions to \eqref{cma}.

If $f$ cannot be rewritten as \eqref{stwacta2017equ2.8}, then we consider
another kind of  $W(\mathrm{d}u)$ given by
$$
W_{i\bar j}(\mathrm{d}u)=a_i\nabla_{\bar j}u+a_{\bar j}\nabla_iu,
$$
where $a_{\bar i}=\overline{a_i}.$
This kind of  $W(\mathrm{d}u)$ is considered by Yuan \cite{yuancjm18} for $\dim_{\mathbb{C}}M=2$ and by Tosatti \& Weinkove \cite{twarxiv1906} for general dimension of $M$.
In this case, we have
\begin{equation}
\label{secondwdu}
W_{i\bar j}^p=a_{\bar j}\delta_{ip},\quad Z_{i\bar j}^p=\frac{1}{m-1}\left(a_{\bar p}\delta_{ij}-a_{\bar j}\delta_{ip}\right),
\end{equation}
satisfying Condition \eqref{zu1} and Condition \eqref{zu2} of $Z(u)$. If $\chi$ satisfies $\partial\bar\partial \chi=0,$ then we can choose $a$ to be a holomorphic $1$ form such that $\vartheta_u$ lies in the same Aeppli cohomology class of $\chi $ (see \cite{twarxiv1906} for more details).

\begin{defn}
  \label{csubsol}
Let $(M,J,g)$ be a compact Hermitian manifold with smooth boundary, $\dim_{\mathbb{C}}M=m$, and the canonical complex structure $J$, where $g$ is the Hermitian metric. Then a function $\underline{u}\in C^2(M,\mathbb{R})$ is called a $\mathcal{C}$-subsolution of \eqref{cma} if at each point $\mathbf{p}$, the set
\begin{equation*}
\left(\lambda (\vartheta_{\underline{u}}^\flat)+\Gamma_m\right)\cap \partial \Gamma^{h(\mathbf{p})}
\end{equation*}
is bounded. Here and hereafter,  $\Gamma^\sigma$ is a convex set given by $$\Gamma^\sigma:=\{\lambda\in\Gamma:\;f(\lambda)>\sigma\}.$$
A  function $\underline{u}\in C^2(M,\mathbb{R})$ is called  admissible if $$\lambda (\vartheta_{\underline{u}}^\flat )\in\Gamma,\quad\text{for any}\;x\in M.$$

A  function $\underline{u}\in C^2(M,\mathbb{R})$ is called  an admissible subsolution to the Dirichlet problem \eqref{cma}-\eqref{cmabv} 
if
\begin{align*}
          F(\vartheta_{\underline{u}}^\flat)
           =f(\lambda(\vartheta_{\underline{u}}^\flat))\geq& h,  \;\;\text{on}\;\; M, \\
         \underline{u}=&\varphi,  \;\;\text{on}\; \;\partial M.
\end{align*}
A solution $($resp. admissible subsolution$)$ $u$ $($resp. $\underline{u}$$)$ is called geometric
solution $($resp. geometric admissible subsolution$)$ if $\lambda(\vartheta_{u}^{\flat})\in \overline{\Gamma_m}$
$($resp. $\lambda(\vartheta_{\underline{u}}^\flat)\in \overline{\Gamma_m}$$).$
\end{defn}
Note that any admissible subsolution is a $\mathcal{C}$-subsolution. For the notion of geometric solution see for example Zhang \cite{zhangimrn2010}.
\begin{thm}
 \label{mainthm}
Let $(M,J,g)$ be a compact Hermitian manifold with smooth boundary and $\dim_{\mathbb{C}}M=m,$ where $g$ is the Hermitian metric with respect to the complex structure $J,$ and let $\underline{u}\in C^4(M,\mathbb{R})$ be an admissible  subsolution of the Dirichlet problem \eqref{cma}-\eqref{cmabv} with $f$ satisfying Assumption \eqref{assum1}, \eqref{assum2} and \eqref{assum3}. Then  there holds a priori estimate for the solution $u\in C^4(M,\mathbb{R})$
\begin{equation}
\label{equuc2alpha}
\|u\|_{C^{2}(M,\mathbb{R})}\leq C,
\end{equation}
where $C>0$ is a constant depending on the initial data of $M$, $\underline{u}$ and the upper bound of $|\partial u|_g$.
\end{thm}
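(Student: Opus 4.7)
The plan is to split the proof into a $C^{0}$ bound and a bound on the complex Hessian of $u$; the gradient bound $|\partial u|_{g}\le C$ is used freely throughout, as allowed by the statement. The $C^{0}$ estimate follows the standard pattern. The lower bound $u\ge\underline{u}$ on $M$ comes from the comparison principle: at an interior minimum of $u-\underline{u}$ one has $\vartheta_{u}^{\flat}\le\vartheta_{\underline{u}}^{\flat}$ in the Hermitian sense, so ellipticity and concavity of $f$ would force $h=f(\lambda(\vartheta_{u}^{\flat}))\le f(\lambda(\vartheta_{\underline{u}}^{\flat}))$ with strict inequality unless things are frozen; since $\underline{u}$ is an admissible subsolution and $u-\underline{u}=0$ on $\partial M$, this traps the minimum on the boundary. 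For the upper bound I would invoke $\Gamma\subset\Gamma_{1}$ (recorded in \eqref{cone}), which forces $\Delta u\ge -C_{0}$ for a constant $C_{0}$ depending on $\chi$, $W$, and the given gradient bound; then the Laplacian maximum principle against the boundary data $\varphi$ yields $u\le C$.

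For the complex Hessian I would use a global-to-boundary argument. Test the function
\[
Q=\log\lambda_{1}(\vartheta_{u}^{\flat})+\phi(|\partial u|_{g}^{2})+\psi(u-\underline{u}),
\]
where $\lambda_{1}$ is the largest eigenvalue and $\phi,\psi$ are convex increasing functions to be chosen. If $Q$ attains its maximum at an interior point $x_{0}$, pick holomorphic coordinates in which $\omega$ and $\vartheta_{u}^{\flat}$ are simultaneously diagonal at $x_{0}$ and apply the linearized operator $F^{i\bar{j}}\nabla_{i}\nabla_{\bar{j}}$ to $Q$. The critical point is the third-order term coming from differentiating $\vartheta_{u}$: it contains potentially uncontrolled contributions of type $|\nabla\nabla u|^{2}$ coming from $\nabla W(\mathrm{d}u)$. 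Conditions \eqref{zu1} and \eqref{zu2} on $Z$, equivalently on $W$ via \eqref{eq:cz}, are designed exactly to cancel these terms, as in \cite{stw1503}. Coupling this with the Sz\'{e}kelyhidi dichotomy for $\mathcal{C}$-subsolutions (available because any admissible subsolution is a $\mathcal{C}$-subsolution) provides a uniform bound $\lambda_{1}(x_{0})\le C$, and hence $Q\le C$ on $M$ once $Q$ is controlled on $\partial M$.

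The boundary $C^{2}$ estimate is the classical hard part. Fix $x_{0}\in\partial M$ and work in holomorphic normal coordinates at $x_{0}$ with the real inward normal as the last real axis. The tangential-tangential derivatives of $u$ at $x_{0}$ are immediately controlled by differentiating the boundary identity $u-\underline{u}=0$ twice along tangential directions. The mixed tangential-normal derivatives are obtained from a Guan-type barrier
\[
v=A(u-\underline{u})+B\rho-N\rho^{2}
\]
on a small tubular neighborhood of $\partial M$, where $\rho$ is the distance to $\partial M$; taking $A$ large first and then $B,N$ large so that $F^{i\bar{j}}\nabla_{i}\nabla_{\bar{j}} v\le -\varepsilon(1+\sum_{i}F^{i\bar{i}})$, one applies the maximum principle to $v\pm\partial_{\tau}(u-\underline{u})$ for tangential $\tau$. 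Finally, the double-normal second derivative at $x_{0}$ is recovered by reading off the remaining eigenvalue of $\vartheta_{u}^{\flat}$ from the equation $f(\lambda(\vartheta_{u}^{\flat}))=h$; the $\mathcal{C}$-subsolution hypothesis, in the form of boundedness of $\bigl(\lambda(\vartheta_{\underline{u}}^{\flat})+\Gamma_{m}\bigr)\cap\partial\Gamma^{h(x_{0})}$, is exactly the quantitative tool that prevents the missing eigenvalue from blowing up, in the spirit of \cite{cnsacta,guan2014}.

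I expect the main obstacle to be the double-normal boundary estimate: in the Hermitian (non-K\"ahler) complex setting the real and complex Hessians do not commute and the extra term $W(\mathrm{d}u)$ must be carried along, so the $\mathcal{C}$-subsolution condition has to be used quantitatively, not merely qualitatively. By contrast, the conceptual difficulty of the global step, namely taming $|\nabla\nabla u|^{2}$, is resolved for us by \eqref{zu1}-\eqref{zu2}, so there the remaining work is mostly bookkeeping inside Sz\'{e}kelyhidi's maximum principle in the presence of a boundary. Combining the interior and boundary estimates yields \eqref{equuc2alpha}.
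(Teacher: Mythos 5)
Your overall strategy matches the paper's: comparison principle for the $C^{0}$ bound, a Guan-type barrier $L(v)\le-\varepsilon(1+\mathcal{F})$ and the $\mathcal{C}$-subsolution dichotomy for the boundary second-order estimates, and a $\log\lambda_{1}+\varsigma(|\partial u|_{g}^{2})+\psi$ test function in the spirit of Sz\'ekelyhidi and Sz\'ekelyhidi--Tosatti--Weinkove for the interior bound. Those are exactly the ingredients the paper assembles, so at the structural level you are on the right track.

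There are, however, two places where the sketch as written would not close. First, in the tangent-normal step you propose applying the maximum principle to $v\pm\partial_{\tau}(u-\underline{u})$, but $L\bigl(\partial_{\tau}(u-\underline{u})\bigr)$ produces error terms of the order $\sum_{i}f_{i}|\lambda_{i}|$ (from differentiating $g^{\bar q j}$, the torsion terms, and $W$), which cannot be absorbed by $L(v)\le-\varepsilon(1+\mathcal{F})$ alone. The paper's test function also carries $-K^{-1}\sum_{\beta\neq 2m-1}\bigl((u-\underline{u})_{x_{\beta}}\bigr)^{2}$; its linearization produces a positive $\sum_{k\neq r}f_{k}\lambda_{k}^{2}$ term (via the Hermitian version of Guan's Proposition~2.19) that, combined with the pointwise bound $\sum_{k}f_{k}|\lambda_{k}|\le\epsilon\sum_{k\neq r}f_{k}\lambda_{k}^{2}+C\epsilon^{-1}\mathcal{F}+P(r)$, is what actually kills the bad term. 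Second, your description of the double-normal estimate (``read off the remaining eigenvalue from the equation; $\mathcal{C}$-subsolution boundedness prevents blow-up'') only covers the case $\lim_{\lambda_{m}\to\infty}f=+\infty$. When $f_{\infty}<\infty$ (e.g.\ Hessian quotients), the equation does not determine the missing eigenvalue, and one must instead show a uniform lower bound for $\tilde F(\vartheta_{u}')-h$ on $\partial M$; this requires introducing $f_{\infty}$, the quantity $c_{\infty}=\inf_{\partial M}\bigl(f_{\infty}(\lambda'(\vartheta_{\underline{u}}'))-F(\vartheta_{\underline{u}})\bigr)>0$, and running a second boundary maximum-principle argument with the auxiliary functions $\Phi$ and $\Psi$. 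Both of these points are in the references you cite, but they are precisely where the proof has content beyond bookkeeping and should not be left implicit.
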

In the following, by saying a uniform constant $C$, we mean that the constant $C$ depends only on the background data and the fixed adapted data (see Section \ref{secboundary}) which will change from line to line.

Let us give some examples of the equations in Theorem \ref{mainthm}.
\begin{exa}[generalized Hessian equations]
 If $f=\log\sigma_k$ with $\Gamma=\Gamma_k$ and $1\leq k\leq m,$ then this is the generalized Hessian equation  $($since there exist terms of first order derivative of $u$$)$ defined by
\begin{equation}
\label{hessiancma}
\vartheta_u^k\wedge\omega^{m-k}=e^{h}\omega^m.
\end{equation}
\end{exa}
If $\chi=\omega,\, W(\mathrm{d}u)\equiv0,\,k=m,$ and $M$ is a K\"ahler manifold, then \eqref{hessiancma} is the complex Monge-Amp\`ere equation and the result belongs to  Boucksom \cite{boucksombd} which contains as special cases Caffarelli,  Kohn,  Nirenberg \& Spruck \cite{ckns1985}, and Chen \cite{chen2000} (see also \cite{yau1978,guan1998,donaldson2012,blocki09,blocki2012,ps09}).

If $\partial M=\emptyset,$ and $\chi=\omega,\, W(\mathrm{d}u)\equiv0$ and $M$ is K\"ahler, then the solutions to \eqref{hessiancma}  belong to Yau \cite{yau1978} with $k=m$ to solve the Calabi conjecture, to Dinew \&  Ko{\l}odziej \cite{dkajm} (cf. \cite{houmawu}) for $1<k<m.$

If $\partial M=\emptyset,$ and $\chi=\omega,\, W(\mathrm{d}u)\equiv0$ and $M$ is Hermitian, then the solutions to \eqref{hessiancma} belong to Cherrier \cite{ch1} with $k=m=2$ and  Tosatti \& Weinkove \cite{twjams10} with $k=m$ for general $m$ (cf. \cite{ha,twasian,gl}), and to Sz\'ekelyhidi \cite{gaborjdg} and Zhang \cite{zhangpjm} for $1<k<m$ independently.
\begin{exa}[generalized Hessian quotient equations]
 If $f=\left(\sigma_k/\sigma_\ell\right)^{1/(k-\ell)}$ with $\Gamma=\Gamma_k$ and $1\leq \ell<k\leq m,$ then this is the generalized Hessian quotient equation $($since there exist terms of first order derivative of $u$$)$ defined by
 \begin{equation}
 \label{hessianquotient}
 \vartheta_u^\ell\wedge\omega^{m-\ell}=h \vartheta_u^k\wedge\omega^{m-k},\quad 0\leq \ell<k\leq m,\quad 0<h\in C^\infty(M,\mathbb{R}).
 \end{equation}
\end{exa}
If $\partial M=\emptyset, \,W(\mathrm{d}u)\equiv0$ and $M$ is K\"ahler,  then $h=\frac{\int_M\chi^\ell\wedge\omega^{m-\ell}}{\int_M\chi^k\wedge\omega^{m-k}}$ is a constant and the solution to  \eqref{hessianquotient} is obtained by Song \& Weinkove \cite{songweinkove2008} for $\ell=m-1,\,k=m$ and this solution is the critical point of the $J$-flow introduced by Donaldson \cite{donald1999asian} from the point of view of moment amps, as well as Chen \cite{chen2000imrn,chen2004cag} in his study of the Mabuchi energy, by Fang, Lai \& Ma \cite{fanglaima2011} for $1\leq \ell<m,\,k=m$, and by Sz\'ekelyhidi \cite{gaborjdg} for $1\leq \ell<k\leq m$.

If $M$ is a compact Hermitian manifold and $W(\mathrm{d}u)\equiv0$, then the solution to \eqref{hessianquotient} with $k=m,\,1\leq \ell<m$  belongs to Sun \cite{sun2017} with $\partial M=\emptyset$ and to Guan \& Sun \cite{guansun2015} with $\partial M\not=\emptyset$ (i.e., the Dirichlet problem).
\begin{exa}
The equations with
\begin{equation}
\label{tklcma}
f(\lambda):=\log\frac{\sigma_k(T(\lambda))}{\sigma_\ell(T(\lambda))},\quad \lambda\in T^{-1}(\Gamma_k), \quad 0\leq \ell<k\leq m.
\end{equation}
\end{exa}
This equation is also called $(m-1,m-1)$-Hessian equation (cf.\cite{collinsarxiv1903}).
Let us give some examples for \eqref{tklcma}. Fu, Wang \& Wu \cite{fuwangwu1,fuwangwu2} study the form type Monge-Amp\`ere equations to find the Calabi-Yau type theorem for the balanced metric (i.e., Hermitian metric $\omega$ with $\mathrm{d}(\omega^{m-1})=0$) in the Bott-Chern cohomology group $H_{\mathrm{BC}}^{m-1,m-1}(M,\mathbb{R})$ on compact Hermitian manifolds without boundary. That is, given any representative $\Phi\in c_1(M)\in H^{1,1}_{\mathrm{BC}}(M,\mathbb{R})$ and any balanced metric $\omega$ with $\omega^{m-1}\in H_{\mathrm{BC}}^{m-1,m-1}(M,\mathbb{R}),$ we hope to find a new balanced metric $\omega_u$ such that $\ric(\omega_u)=\Phi$, where
\begin{equation*}
\omega_u^{m-1}=\omega^{m-1}+\sqrt{-1}\partial\bar\partial\left(u\omega^{m-2}\right)>0,\quad u\in C^{\infty}(M,\mathbb{R}).
\end{equation*}
Fu, Wang \& Wu \cite{fuwangwu2} solve  this question on compact K\"ahler manifolds without boundary which admits nonnegative orthogonal
bisectional curvature without boundary, i.e., they try to solve the equation
\begin{equation}
\label{n1pshcma}
\det\left(\omega^{m-1}+\sqrt{-1}\partial\bar\partial u\wedge\eta^{m-2} \right)=e^{F+b}\det\omega^{m-1},\quad \sup_M u=0,\quad (u,b)\in C^\infty(M,\mathbb{R})\times \mathbb{R},
\end{equation}
where $\omega$ is a balanced metric and $\eta$ is a K\"ahler metric with nonnegative orthogonal bisectional curvature.

Tosatti \& Weinkove \cite{twjams} observe  that \eqref{n1pshcma} is related to the $(m-1)$-plurisubharmonic ($(m-1)$-psh for short) function which is introduced by Harvey \& Lawson \cite{harveylawson2011}, and solve a kind of Monge-Amp\`ere type equations. As a corollary, they give an affirmative answer to \eqref{n1pshcma} on compact K\"ahler manifolds without boundary and later they generalize their result on compact Hermitian manifolds without boundary \cite{twcrelle}.
 Note that \eqref{n1pshcma} is in the form of \eqref{cma} with
 $$
 f(\lambda)=\log\sigma_m(T(\lambda)),\quad \lambda\in T^{-1}(\Gamma_m),\quad W(\mathrm{d}u)\equiv0.
 $$
The equation \eqref{tklcma} is proposed by Tosatti, Wang, Weinkove \& Yang \cite{twwycvpde} with $W(\mathrm{d}u)\equiv0$ which is solved by Sz\'ekelyhidi \cite{gaborjdg} with $\ell=0,1\leq k\leq m $ on compact Hermitian manifolds without boundary.

Another kind of equation in the form of \eqref{tklcma} is related to the Gauduchon conjecture \cite{gauduchon2} on compact Hermitian manifolds without boundary. A Hermitian metric $\omega$ is called  \emph{Gauduchon}  (see \cite{gauduchon1}) if
$
\partial\dbar(\alpha^{m-1})=0,
$
and \emph{strongly Gauduchon} (see \cite{popovici}) if
$
\bar\partial(\alpha^{m-1})\;\text{is}\;\partial\textrm-\text{exact}.
$
This conjecture can be deduced from the equation (see \cite{popovici,twcrelle,stw1503})
\begin{equation}
\label{gaucma}
\det \omega_u^{m-1}=e^{F+b}\det\omega^{m-1},\quad (u,b)\in C^\infty(M,\mathbb{R})\times \mathbb{R},
\end{equation}
where
\begin{equation*}
\omega_u^{m-1}=\omega_0^{m-1}+\sqrt{-1}\partial\bar\partial u\wedge\omega^{m-2}+\Re\left(\sqrt{-1}\partial u\wedge \bar\partial(\omega^{m-2})\right)>0,
\end{equation*}
where $\omega$ is the Gauduchon metric and $\omega_0$ is another Hermitian metric.

Sz\'ekelyhidi, Tosatti \& Weinkove \cite{stw1503} solve \eqref{gaucma} on compact Hermitian manifolds without boundary and hence give an affirmative answer to the Gauduchon conjecture.  Note that \eqref{gaucma} is in the form of \eqref{cma} with
$f=\log\sigma_m(T(\lambda))$ on $\Gamma =T^{-1}(\Gamma_m)$ which is exactly solved by Sz\'ekelyhidi, Tosatti \& Weinkove \cite{stw1503}  under  Assumptions \eqref{zu1} and Assumption \eqref{zu2} of  $Z(\mathrm{d}u)$ on compact Hermitian manifolds without boundary (cf.\cite{zhengimrn}). Actually, their method works for $f=\log\sigma_k(T(\lambda))$ on $\Gamma =T^{-1}(\Gamma_k)$ with $1\leq k\leq m.$

In order to solve the Dirichlet problem \eqref{cma}-\eqref{cmabv}, one need deduce a priori $C^2$ estimates up to the boundary. One of the most difficult steps is possibly the second order estimates on the boundary (see \cite{guansun2015}). Other ingredients are the Evans-Krylov theorem (see \cite{twwycvpde}), the Schauder estimates and the continuity method arguments (see for example \cite{gt1998}), which are all well understood and we will omit them.

Given Theorem \ref{mainthm}, it remains to derive an upper bound for the gradients of the solutions.  Compared with the Riemannian setup \cite{guanarxiv2014}, it seems not easy to get the upper bound for the gradients of the solutions under our general setup in complex manifolds. Indeed, deducing the first order estimates for fully nonlinear equations in complex manifolds is a rather challenging and mostly open question (see \cite{guansun2015}). To our knowledge, the existing estimates for the gradients of the solutions to the Dirichlet problem \eqref{cma}-\eqref{cmabv} seem to need that $\Gamma$ is equal to $\Gamma_m$ (see \cite{blocki09,guansun2015,yuancjm2018})  or some special domain which comes from the analytic aspect (see \cite{guanqiuyuan2019}), or that there exists strict subsolution (see \cite{li1994}) or geometric solutions (see \cite{zhangimrn2010}).

In this paper, we 
prove the uniform upper bound of the gradients of the solutions to
the Dirichlet problem \eqref{hessianquotient}-\eqref{cmabv} with $1\leq \ell \leq m-1,\,k=m.$
 \begin{thm}
 \label{thmhessianquotient}
 Let $(M,J,g)$ be a compact Hermitian manifold with smooth boundary and $\dim_{\mathbb{C}}M=m,$ where $g$ is the Hermitian metric with respect to the complex structure $J,$ and $ f=\left(\sigma_m/\sigma_\ell\right)^{1/(m-\ell)}$ with $\Gamma=\Gamma_m$ and $1\leq \ell<m,$
and let  $\underline{u}\in C^\infty(M,\mathbb{R})$ be an admissible  subsolution of the Dirichlet problem \eqref{cma}-\eqref{cmabv}. That is, $\underline{u}$ satisfies
 $$
 h\vartheta_{\underline{u}}^m\geq \vartheta_{\underline{u}}^{m-\ell}\wedge\omega^\ell.
 $$
 Then there exists a unique smooth solution to the Dirichlet problem \eqref{cma}-\eqref{cmabv}.
 \end{thm}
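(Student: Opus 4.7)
The proof is by the continuity method, so the whole task reduces to a priori estimates in $C^{2,\alpha}$. Theorem \ref{mainthm} already controls $\|u\|_{C^{2}(M,\mathbb{R})}$ in terms of $|\partial u|_{g}$, so the only genuinely new input is a uniform gradient bound for admissible solutions of \eqref{cma}--\eqref{cmabv} with $f=(\sigma_m/\sigma_\ell)^{1/(m-\ell)}$ and $\Gamma=\Gamma_m$. I would obtain the $C^0$ bound by using the admissible subsolution $\underline u$: the comparison principle for the elliptic concave operator $F$ on $\Gamma_m$ gives $u\ge\underline u$ on $M$, and an upper bound for $u$ follows from the maximum principle together with \eqref{filambdai}. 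For the boundary gradient estimate, since $u=\underline u=\varphi$ on $\partial M$ and $u\ge\underline u$, only an upper bound of $\partial_\nu u$ is needed; I would build a barrier of the form $\underline u+t\rho-A\rho^2$ on a tubular neighborhood of $\partial M$, where $\rho=\mathrm{dist}(\cdot,\partial M)$, and tune $t$ and $A$ to make it an admissible supersolution on a collar, so that the comparison principle bounds $\partial_\nu u$ from above.

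The main obstacle is the interior gradient estimate. What makes this case tractable is precisely the two hypotheses in the theorem: $\Gamma=\Gamma_m$ forces $\vartheta_u$ to be a genuine Hermitian form, and $k=m$ gives the equation a Monge--Amp\`ere-type leading term $\vartheta_u^m$; both contribute definite positivity to the linearized coefficients. I would apply the maximum principle to the test function
\[
Q:=\log|\partial u|_\omega^2+\psi(u-\underline u),
\]
with $\psi$ convex, $\psi'$ large, and $-\psi''$ bounded below by a positive constant on the range of $u-\underline u$. At an interior maximum of $Q$ one differentiates $\log f(\lambda(\vartheta_u^\flat))=\log h$ twice, substitutes the critical point relations $\nabla_i Q=\nabla_{\bar i}Q=0$, and expands $F^{i\bar j}\nabla_i\nabla_{\bar j}Q\le 0$. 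The terms that combine to close the estimate are: the concavity of $\log f$; the positivity $\sum_i F^{i\bar i}\ge c_0>0$ on $\Gamma_m$; a coercive contribution $\psi'\sum_i F^{i\bar i}|\partial u|_\omega^2$; and the subsolution-type inequality $F^{i\bar j}((\vartheta_u)_{i\bar j}-(\vartheta_{\underline u})_{i\bar j})\le 0$ at a maximum of $u-\underline u$. The torsion of $\omega$ and the first-order term $W(\mathrm{d}u)$ contribute at worst $C(1+|\partial u|_\omega)$ to the right-hand side, which is absorbed by taking $\psi'$ large in the Bloc\-ki--Guan--Sun style, yielding $|\partial u|_\omega\le C$.

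With the $C^0$, boundary, and interior gradient bounds in hand, Theorem \ref{mainthm} supplies $\|u\|_{C^2(M,\mathbb{R})}\le C$; the Evans--Krylov theory for complex Hessian-type equations as in \cite{twwycvpde} upgrades this to $C^{2,\alpha}$, and standard Schauder theory gives $C^\infty$. Existence then follows from the continuity method along the family $h_t:=th+(1-t)f(\lambda(\vartheta_{\underline u}^\flat))$: at $t=0$, the subsolution $\underline u$ itself solves the equation; openness follows from the implicit function theorem applied to the strictly elliptic linearization on functions vanishing on $\partial M$; and closedness follows from the uniform $C^{2,\alpha}$ bound. Uniqueness is immediate from the comparison principle for the concave elliptic operator $F$.
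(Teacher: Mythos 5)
Your structural outline (comparison principle for the $C^0$ bound, a barrier at $\partial M$ for the boundary gradient, a maximum--principle argument with a log-gradient test function in the interior, then Evans--Krylov, Schauder, and continuity) matches the paper's plan, and the continuity path $h_t=th+(1-t)f(\lambda(\vartheta_{\underline u}^\flat))$ is fine since $\underline u$ remains an admissible subsolution for every $t$. But the interior gradient estimate --- the genuinely new ingredient the theorem supplies --- is where your sketch has a real gap, and the gap is precisely that you never use the hypothesis $f=(\sigma_m/\sigma_\ell)^{1/(m-\ell)}$.

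At an interior maximum of $Q=\log|\partial u|_\omega^2+\psi(u-\underline u)$, the cross term $2\Re\big(F^{i\bar j}\nabla_i\psi\,\nabla_{\bar j}\log|\partial u|_\omega^2\big)$ produces, in a diagonal unitary frame for $\vartheta_u$, a term of size
\[
\frac{\psi'}{|\partial u|_\omega^2}\sum_i F^{i\bar i}(\vartheta_u)_{i\bar i}\,|\nabla_i u|^2,
\]
and this is \emph{not} $O(1+|\partial u|_\omega)$. One only knows $\sum_i F^{i\bar i}(\vartheta_u)_{i\bar i}=f=h$, not an individual bound on $F^{i\bar i}\lambda_i$, so the weighted sum can blow up with $|\lambda|$. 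Increasing $\psi'$ makes this term larger, so the Bl\l ocki--Guan--Sun device of "taking $\psi'$ large" does not absorb it. (Your parenthetical "$-\psi''$ bounded below by a positive constant" together with "$\psi$ convex" is also internally inconsistent; you presumably mean $\psi''\ge c>0$.) For the pure Monge--Amp\`ere case ($\ell=0$) this term is harmless because $F^{i\bar i}\lambda_i\equiv 1$ after normalization --- that is why Bl\l ocki's argument works there --- but for a genuine quotient $\ell\ge1$ the individual products $F^{i\bar i}\lambda_i$ need a structural bound.

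The paper (Section 6) deals with exactly this term. It uses the auxiliary function $P=e^{\rho(\eta)}|\partial u|_g^2$ with $\eta=-u+\sup u+1$, splits at the maximum point into the regimes $|\lambda|\ge R$ and $|\lambda|\le R$, and within the former into the subcases $\max_i F^{i\bar i}\ge K$ and $\max_i F^{i\bar i}\le K$. The second subcase is where the quotient structure is indispensable: writing $\mu_i=1/\lambda_i$ one has $F^{i\bar i}=\frac{1}{m-\ell}f^{m-\ell+1}\mu_i^2\,\sigma_{m-\ell-1;i}(\mu)$, the Maclaurin-type inequality gives $\mu_1\le C$ and then $\sigma_{m-\ell-1;i}(\mu)\le K'$, and a Cauchy--Schwarz split (equation (6.15) of the paper) yields
\[
-\,2\sum_i F^{i\bar i}(\vartheta_u)_{i\bar i}|\nabla_i u|^2 \;\ge\; -C - |\partial u|_g^2\sum_i F^{i\bar i}|\nabla_i u|^2,
\]
so that after dividing by $|\partial u|_g^2$ the bad piece is absorbed by the coercive term $\tfrac{A}{2}\sum_i F^{i\bar i}|\nabla_i u|^2$ once $A$ is large. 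The remaining positivity in both regimes comes from the $\mathcal{C}$-subsolution inequality $\sum F^{ij}((\vartheta_{\underline u}^\flat)_i{}^j-(\vartheta_u^\flat)_i{}^j)\ge\kappa+\kappa\mathcal{F}$, which is the Guan/Sz\'ekelyhidi dichotomy you do cite; but by itself it does not control the cross term above. Without the explicit computation of $F^{i\bar i}\lambda_i$ for the Hessian quotient, your argument closes only in the Monge--Amp\`ere case, not for $1\le\ell\le m-1$.
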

If $W(\mathrm{d}u)\equiv0,$ then Theorem \ref{thmhessianquotient} is obtained by Guan \& Sun \cite{guansun2015}. Here our methods to prove the first and second order estimates are different from the ones in  \cite{guansun2015}.

It follows from \cite{zhangimrn2010} that there hold similar a priori gradient estimates of the geometric solutions to the Dirichlet problem \eqref{cma}-\eqref{cmabv} of generalized Hessian equations with a geometric admissible subsolution. If $\Gamma=\Gamma_m,$ then (admissible) solutions are geometric (admissible) solutions automatically. This, together with our second order estimates, yields that
 \begin{thm}
 \label{thmcmagradient}
 Let $(M,J,g)$ be a compact Hermitian manifold with smooth boundary and $\dim_{\mathbb{C}}M=m,$ where $g$ is the Hermitian metric with respect to the complex structure $J,$ and $ f=\log\sigma_m$ with $\Gamma=\Gamma_m,$
and let  $\underline{u}\in C^\infty(M,\mathbb{R})$ be an admissible  subsolution of the Dirichlet problem \eqref{cma}-\eqref{cmabv}.
 Then there exists a unique smooth solution to the Dirichlet problem \eqref{cma}-\eqref{cmabv} in this case.
 \end{thm}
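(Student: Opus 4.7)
The plan is to obtain Theorem \ref{thmcmagradient} by assembling three a priori estimates and feeding them into the standard continuity method; the hard analytical work has already been packaged into Theorem \ref{mainthm} and Zhang \cite{zhangimrn2010}. First, the function $f = \log \sigma_m$ on $\Gamma = \Gamma_m$ satisfies Assumptions \eqref{assum1}--\eqref{assum3}: concavity of $\log\sigma_m$ follows from concavity of $\sigma_m^{1/m}$ (Garding) combined with the elementary fact that $\log$ of a concave positive function is concave, positivity of $\partial_j \log\sigma_m$ on $\Gamma_m$ is immediate, $\sup_{\partial\Gamma_m}\log\sigma_m = -\infty < \inf_M h$, and $\log\sigma_m(t\lambda) \to +\infty$ as $t \to +\infty$ for every $\lambda \in \Gamma_m$. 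Moreover, since $\Gamma = \Gamma_m$, every admissible function is automatically geometric in the sense of Definition \ref{csubsol}, so a candidate solution $u$ and the subsolution $\underline{u}$ are both geometric. I therefore invoke the gradient estimate from Zhang \cite{zhangimrn2010} to obtain
\begin{equation*}
\sup_M |\partial u|_g \leq C_1,
\end{equation*}
with $C_1$ depending only on the background data and $\underline{u}$.

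Plugging this gradient bound into Theorem \ref{mainthm} yields the full $C^2$ bound $\|u\|_{C^2(M,\mathbb{R})} \leq C_2$. Using $\sigma_m(\lambda(\vartheta_u^\flat)) = e^h$ together with the $C^2$ bound, $\lambda(\vartheta_u^\flat)$ is confined to a compact subset of $\Gamma_m$ bounded away from $\partial\Gamma_m$, so the equation is uniformly elliptic along $u$. Concavity of $F = \log\sigma_m$ and the Evans-Krylov theorem (including its boundary version, cf.\ \cite{twwycvpde}) then upgrade the estimate to $C^{2,\alpha}$ up to $\partial M$, and standard Schauder estimates bootstrap this to $C^{k,\alpha}$ for every $k$.

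Existence now follows from the continuity method applied to the family $F(\vartheta_{u_t}^\flat) = (1-t)F(\vartheta_{\underline{u}}^\flat) + t\, h$, $u_t|_{\partial M} = \varphi$, $t \in [0,1]$: at $t=0$ the subsolution $\underline{u}$ works, openness is standard because the linearisation is a solvable uniformly elliptic linear Dirichlet problem, and closedness is precisely the $C^{k,\alpha}$ estimates just established. Uniqueness follows from the maximum principle applied to the concave operator $F$. The main obstacle in the whole chain is invisible at the present level because it has been absorbed into Theorem \ref{mainthm}, namely the delicate boundary second order estimate, which for complex Monge-Amp\`ere type operators typically requires a careful barrier construction adapted to the admissible subsolution along $\partial M$; once this and Zhang's geometric gradient estimate are granted, Theorem \ref{thmcmagradient} is a clean corollary.
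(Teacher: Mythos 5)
Your argument is essentially the paper's own: since $\Gamma=\Gamma_m$ forces solutions and the admissible subsolution to be geometric, the gradient bound is borrowed from Zhang \cite{zhangimrn2010}, fed into Theorem \ref{mainthm} for the $C^2$ estimate, and the remaining Evans--Krylov, Schauder, continuity-method and maximum-principle steps are exactly the standard ingredients the paper invokes (and omits). No substantive difference or gap to report.
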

Since the Monge-Amp\`ere equations here contain gradient terms, this result generalizes Boucksom \cite{boucksombd} for the Dirichlet problem of Monge-Amp\`ere equations on K\"ahler  manifolds which contains as special cases Caffarelli,  Kohn,  Nirenberg \& Spruck \cite{ckns1985}, and Chen \cite{chen2000} (cf. \cite{yau1978,guan1998,donaldson2012,blocki09,blocki2012,ps09,chuarxiv1807}).

Yuan \cite{yuanarxiv2020} also consider the Dirichlet problem in Theorem \ref{thmhessianquotient} with some different conditions and Theorem \ref{thmcmagradient} using different methods for the first and second order estimates.

We also study Equation \eqref{cma} on compact Hermitian manifolds without boundary using our second order interior estimates and some other analytic techniques. We prove
\begin{thm}
\label{thmkellhessianquotient}
Let $(M,J,g)$ be a compact Hermitian manifold without boundary and $\dim_{\mathbb{C}}M=m,$ where $g$ is the Hermitian metric with respect to the complex structure $J.$

If there exists a function $\underline{u}\in C^\infty(M,\mathbb{R})$  such that $\chi_{\underline{u}}$ is a a $k$-positive form $($i.e., the $m$-tuple of their eigenvalues with respect to $\omega$ belong to $\Gamma_{k}$$)$ with
\begin{align}
\label{kellconhermitian1}
 &\frac{\vartheta_{\underline{u}}^\ell\wedge\omega^{m-\ell}}{\vartheta_{\underline{u}}^k\wedge\omega^{m-k}}\geq h,\quad\mbox{on}\quad M,\\
 \label{kellconhermitian2}
 &k h  \vartheta_{\underline{u}}^{k-1}\wedge\omega^{m-k}- \ell\vartheta_{\underline{u}}^{\ell-1}\wedge\omega^{m-\ell}>0
\end{align}
for $(m-1,m-1)$ form,
then   the $(k,\ell)$-Hessian quotient equation \eqref{hessianquotient} $1\leq \ell<k\leq m$ has a smooth solution.

If there exists a function $\underline{u}\in C^\infty(M,\mathbb{R})$  such that $P_\omega(\chi_{\underline{u}})$ is a a $k$-positive form with
\begin{align}
\label{kellm-1conhermitian1}
 &\frac{P_\omega(\chi_{\underline{u}})^\ell\wedge\omega^{m-\ell}}
 {P_\omega(\chi_{\underline{u}})^k\wedge\omega^{m-k}}\geq h,\quad\mbox{on}\quad M,\\
 \label{kellm-1conhermitian2}
 &k h  P_\omega(\chi)^{k-1}\wedge\omega^{m-k}- \ell P_\omega(\chi)^{\ell-1}\wedge\omega^{m-\ell}>0
\end{align}
for $(m-1,m-1)$ form,
then the $(k,\ell)$-$(m-1,m-1)$-Hessian quotient equation \eqref{tklcma} with $1\leq \ell<k\leq m$ has a smooth solution.
\end{thm}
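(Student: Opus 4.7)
The plan for both parts is the continuity method combined with the a priori estimates of the paper. I describe the argument for the first part (the $(k,\ell)$-Hessian quotient equation) in detail; the second part follows in parallel after replacing $\vartheta_u$, $f$, $\Gamma_k$ by $\tilde\vartheta_u=P_\omega(\vartheta_u)$, $\tilde f$, $T^{-1}(\Gamma_k)$ via \eqref{defnpomega} and \eqref{stwacta2017equ2.8}, with the $Z(\mathrm{d}u)$ structure of \eqref{eq:cz}--\eqref{secondwdu} ensuring that conditions \eqref{zu1}--\eqref{zu2} hold and Theorem \ref{mainthm} is available.

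Consider the one-parameter family
\begin{equation*}
\vartheta_{u_t}^\ell\wedge\omega^{m-\ell}=e^{b_t}h_t\vartheta_{u_t}^k\wedge\omega^{m-k},\quad \sup_M u_t=0,\quad t\in[0,1],
\end{equation*}
where $h_t$ interpolates between the datum $h_0=(\vartheta_{\underline{u}}^\ell\wedge\omega^{m-\ell})/(\vartheta_{\underline{u}}^k\wedge\omega^{m-k})$ (so that $u_0=\underline{u}$, $b_0=0$ is a solution) and $h_1=h$, and $b_t\in\mathbb{R}$ is a free constant compensating for the invariance of the equation under $u\mapsto u+c$. The cone condition \eqref{kellconhermitian1} keeps $\underline{u}$ a $\mathcal{C}$-subsolution along the path, and \eqref{kellconhermitian2} guarantees that the linearization is uniformly elliptic at admissible $u_t$. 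Openness of the solvability set then follows from the implicit function theorem by Fredholm theory, the free constant $b_t$ filling the one-dimensional cokernel.

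Closedness requires uniform $C^{2,\alpha}$ bounds. The $C^0$ estimate and the boundedness of $b_t$ follow from an ABP-type argument for $\mathcal{C}$-subsolutions as in Sz\'ekelyhidi \cite{gaborjdg} and Sun \cite{sun2017}; the $C^2$ estimate is exactly Theorem \ref{mainthm}, modulo a $C^1$ bound; and Evans--Krylov together with Schauder bootstrap yields $C^\infty$. The verification that $b_1=0$ at the end follows from a maximum-principle comparison between $u_1$ and $\underline{u}$ using the subsolution inequality and the monotonicity of $f=(\sigma_k/\sigma_\ell)^{1/(k-\ell)}$ on $\Gamma_k$.

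The main obstacle is the $C^1$ estimate in the absence of boundary data. My plan is Sz\'ekelyhidi's blow-up strategy: if $\sup_M|\nabla u_t|\to\infty$ along a subsequence, rescale the manifold around a point $p_t$ of maximal gradient by $\lambda_t\sim\|\nabla u_t\|_{L^\infty}^{-1}$. The linearity of $W(\mathrm{d}u)$ in $\mathrm{d}u$ causes the gradient term to pick up a factor $\lambda_t\to 0$ and disappear in the limit, as do $\chi$ and the non-flat part of $\omega$. The limit on $\mathbb{C}^m$ with the flat metric is a bounded admissible plurisubharmonic solution to a homogeneous $(k,\ell)$-Hessian quotient equation, which the Dinew--Ko{\l}odziej Liouville theorem (in the Hessian quotient form extended by Sz\'ekelyhidi \cite{gaborjdg}) forces to be constant, contradicting the normalization $|\nabla u_t|(p_t)\equiv 1$. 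For the second part, the parallel blow-up produces an $(m-1)$-plurisubharmonic limit on $\mathbb{C}^m$ and the $(m-1,m-1)$-Liouville theorem established in Tosatti--Weinkove \cite{twcrelle} and Sz\'ekelyhidi--Tosatti--Weinkove \cite{stw1503} closes the contradiction.
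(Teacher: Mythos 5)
Your architecture is essentially the paper's: the continuity path \eqref{sec7gutt} with a free constant $e^{b_t}$, openness via the one-dimensional kernel of the adjoint of the linearized operator (the Gauduchon/Fredholm argument) with the constant filling the cokernel, the Green-function/ABP-type zero order estimate, the interior second order estimate, and the blow-up plus Liouville-theorem gradient estimate. Two attributions should be corrected, though they do not change the scheme. First, on a closed manifold the Hessian bound you need is Theorem \ref{thm2orderin}, which requires only a $\mathcal{C}$-subsolution, not Theorem \ref{mainthm}: along the path the function $0$ (with $\chi$ replaced by $\chi_{\underline{u}}$) is in general \emph{not} an admissible subsolution of the $t$-equation, so the $\mathcal{C}$-subsolution version is what must be used. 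Second, the roles of the two hypotheses are the reverse of what you write: \eqref{kellconhermitian2} is precisely the $\mathcal{C}$-subsolution condition, while \eqref{kellconhermitian1} enters only through the maximum-principle bound $e^{b_t}\geq (h_0/h)^t(\mathbf{x}_{\mathrm{max}})\geq 1$, which is what makes \eqref{equivalentform} hold for every $t$ (Proposition \ref{propkellhessianquotient}); ellipticity at admissible $u_t$ is automatic from $f_j>0$. Also, the paper closes both blow-up arguments with the single Liouville theorem for $\Gamma$-solutions \cite[Theorem 20]{gaborjdg}, applied with $\Gamma=T^{-1}(\Gamma_k)$ in the second case, rather than a separate $(m-1,m-1)$ statement from \cite{twcrelle,stw1503}; this is cosmetic.

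The genuine gap is your final claim that $b_1=0$ follows from a maximum-principle comparison between $u_1$ and $\underline{u}$. At a maximum of $u_1-\underline{u}$ the gradient terms cancel (since $W$ is linear in $\mathrm{d}u$) and $\vartheta_{u_1}\leq\vartheta_{\underline{u}}$, so monotonicity of $\sigma_\ell/\sigma_k$ gives $he^{b_1}\geq h_0\geq h$ there, i.e. $b_1\geq 0$; at a minimum one only gets $e^{b_1}\leq (h_0/h)$ at that point. Since $h_0\geq h$ everywhere, neither inequality can force $b_1\leq 0$, so the comparison reproduces only the two-sided bound $|b_t|\leq C$ that the paper proves and nothing more. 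In fact $b_1=0$ can genuinely fail: take $M$ K\"ahler, $W\equiv 0$, $(k,\ell)=(m,m-1)$, $\chi_{\underline{u}}=c\omega$ and $h$ a constant with $\frac{m-1}{mc}<h<\frac{1}{c}$; then \eqref{kellconhermitian1} and \eqref{kellconhermitian2} hold, but integrating \eqref{hessianquotient} over $M$ forces $h=1/c$ by Stokes, so the equation with this $h$ has no solution and the continuity method must terminate with $e^{b_1}=1/(ch)>1$. What the continuity argument — yours and the paper's — actually delivers is a smooth admissible $u$ solving the quotient equation with right-hand side $he^{b_1}$ for some constant $b_1$ with $|b_1|\leq C$, exactly as the constant $b$ appears in \eqref{gaucma}; the conclusion has to be stated with this normalizing constant (or $h$ must be assumed compatible a priori), and no pointwise comparison argument will remove it.
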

When $(k,\ell)=(m,\ell)$ with $1\leq \ell<m$ and $W_{i\bar j}^p\equiv0,$ Equation \eqref{hessianquotient} is solved by Sun \cite{sun2017}. For general   with $1\leq \ell<k\leq m$, our result partially generalizes \cite[Corollary 3]{gaborjdg} with $W_{i
\bar j}^p\equiv0$ on K\"ahler manifolds without boundary.

Note that Equation \eqref{tklcma} with $(k,\ell)=(k,0)$ and $1\leq k\leq m$ on the compact Hermitian manifolds without boundary is solved by \cite{stw1503} (see    \cite{gaborjdg} for the case  $1\leq k < m$ and $W_{i\bar j}^p\equiv0$).   Equation \eqref{hessiancma} on the compact Hermitian manifolds without boundary is solved by \cite{twarxiv1906,yuancjm18,yuanarxiv2020} which generalized the results mentioned above. Here we give a slightly different second order estimates of these equations motivated by \cite{twarxiv1906} for the   Monge-Amp\`ere equations with gradient terms.

The paper is organized as follows. In section \ref{secpreliminary}, we collect some preliminaries such as Hermitian manifolds with boundary and $\mathcal{C}$-/admissible subsolutions which will be used in the following.  In Section \ref{seczerofirstbd}, we deduce the zero order estimates of the solutions to the Dirichlet problem \eqref{cma}-\eqref{cmabv} on the whole manifold $M$ and the gradient estimates of the solutions to the Dirichlet problem \eqref{cma}-\eqref{cmabv} on the boundary $\partial M$. In Section \ref{sec2orderbd} and Section \ref{sec2orderin}, we give the second  order estimates  of the solutions to the Dirichlet problem \eqref{cma}-\eqref{cmabv} on the boundary and on the whole manifolds respectively, and complete the proof of Theorem \ref{mainthm}. In Section \ref{sec1st}, we obtain the gradient estimates for some special case and prove  Theorem \ref{thmhessianquotient}. In Section \ref{secwubian}, we study Equation \eqref{cma} on compact Hermitian manifolds without boundary and mainly prove Theorem \ref{thmkellhessianquotient}.

\noindent {\bf Acknowledgements}
The authors thank Professor Weisong Dong, Bo Guan, Xinan Ma, and Zhenan Sui for helpful conversation.
The third-named author is the corresponding author and thanks Professor Jean-Pierre Demailly, Valentino Tosatti and Ben Weinkove for their invaluable directions. This paper was almost completed when the third-named author was a post-doc in Institut Fourier supported by the European Research Council (ERC) grant No. 670846 (ALKAGE), hence he thanks the institution for hospitality.

\section{Preliminaries}\label{secpreliminary}
In this section, we collect some preliminaries which will be used in the following (see for example \cite{boucksombd,demaillybook1,guan2014,gaborjdg,trudinger1995}). Throughout the paper, Greek and Latin indices run from $1$ to $2m$ and $1$ to $m$ respectively, and we use subscripts  $x_{\alpha}$ for the partial derivative $\partial/\partial x_{\alpha},$ unless otherwise indicated.
\subsection{The Levi Form of Boundary}Let $\Omega\subset\mathbb{R}^m$ be a bounded open set with $C^k,\,k\in \mathbb{N}^*\cup\{\infty\},$  boundary, i.e., for any $\mathbf{a}\in \partial\Omega$, there exists a  $C^k$ function $\rho$ defined on an open neighborhood $V$ of $\mathbf{a}$ such that
\begin{equation}
\label{localbddefn}
\rho_{\upharpoonright\Omega\cap V}<0,\quad
\rho_{\upharpoonright(\partial\Omega)\cap V}=0,\quad
(\mathrm{d}\rho)_{\upharpoonright(\partial\Omega)\cap V}\not=0.
\end{equation}
Then for another $C^k$ function $\varrho$ defined on $W\ni \mathbf{a}$ with $\varrho_{\upharpoonright W\cap (\partial\Omega)}\equiv0$, there exists a $C^k$ function $\psi$ defined on $W\cap V$ such that
\begin{equation}
\label{varrhofrho}
\varrho=\psi\rho, \quad\text{on}\quad W\cap V,
\end{equation}
and
\begin{equation}
 \label{fbd}
\psi=\frac{\nu\cdot \varrho}{\nu\cdot \rho}=\frac{|\nabla\varrho|}{|\nabla\rho|},\quad\text{on}\quad (\partial\Omega)\cap W\cap V,
\end{equation}
where $\nu$ is the unit outward normal vector on $(\partial\Omega)\cap V\cap W.$


Let $\Omega\subset\mathbb{C}^m$ be a bounded open set with $C^k$ boundary, $ k\in \mathbb{N}^*\cup\{\infty\}$.
Then the holomorphic tangent space to $\partial\Omega$ is by definition the largest complex subspace which
is contained in the tangent space $T_{\partial\Omega}$ to the boundary:
$$
{}^hT_{\partial\Omega}:=T_{\partial\Omega}\cap JT_{\partial\Omega}.
$$
For a local definition function $\rho$ of boundary near $\mathbf{z}$, we claim that ${}^hT_{\partial\Omega,\mathbf{z}}$ is the complex hyperplane in $\mathbb{C}^m$ given by
\begin{equation}
\label{thpartialomega}
{}^hT_{\partial\Omega,\mathbf{z}}:=\left\{\xi\in\mathbb{C}^m:\;\sum\limits_{i=1}^m\frac{\partial\rho}{\partial z_i}(\mathbf{z})\xi_i=0\right\},
\end{equation}
and \eqref{thpartialomega} is independent of the choice of the definition function $\rho$ of the boundary.

Indeed, from the definition of $T_{\partial\Omega}$, it follows that
\begin{equation}
\label{tpartialomega}
T_{\partial\Omega,\mathbf{z}}=\left\{X\in\mathbb{R}^{2m}:\;\left(\mathrm{d}\rho\right)_{\upharpoonright \mathbf{z}}(X)=0\right\}
\end{equation}
and
\begin{align}
\label{jtpartialomega}
JT_{\partial\Omega,\mathbf{z}}
=&\left\{JX\in\mathbb{R}^{2m}:\;\left(\mathrm{d}\rho\right)_{\upharpoonright \mathbf{z}}(X)=0\right\}\\
=&\left\{Y\in\mathbb{R}^{2m}:\;\left(\mathrm{d}\rho\right)_{\upharpoonright \mathbf{z}}(-JY)=0\right\},\nonumber
\end{align}
where we extend $J$ to the $p$ form $\vartheta$ by
\begin{equation*}
(J\vartheta)(\cdot,\cdots,\cdot)=(-1)^p\vartheta(J\cdot,\cdots,J\cdot).
\end{equation*}
From \eqref{tpartialomega} and \eqref{jtpartialomega}, we get
\begin{equation}
\label{tcapjt}
T_{\partial\Omega,\mathbf{z}}\cap JT_{\partial\Omega,\mathbf{z}}
=\left\{X\in\mathbb{R}^{2m}:\;\left(\mathrm{d}\rho\right)_{\upharpoonright \mathbf{z}}(X-\sqrt{-1}JX)=0\right\}.
\end{equation}
Since $J(X-\sqrt{-1}JX)=\sqrt{-1}(X-\sqrt{-1}JX)$, equality \eqref{thpartialomega} follows from \eqref{tcapjt}.

Now we assume that $\varrho$ is another definition function of $\partial\Omega$ near $\mathbf{z}$.
Since $\mathrm{d}\varrho(\mathbf{z})\not=0$, it follows from \eqref{varrhofrho} and \eqref{fbd} that $\psi(\mathbf{z})\not=0.$
This yields that
\begin{equation*}
{}^hT_{\partial\Omega,\mathbf{z}}=\left\{\xi\in\mathbb{C}^m:\;\left(\mathrm{d}\varrho\right)_{\upharpoonright \mathbf{z}}(\xi)=0\right\}
=\left\{\xi\in\mathbb{C}^m:\:\left(\mathrm{d}\rho\right)_{\upharpoonright \mathbf{z}}(\xi)=0\right\},
\end{equation*}
as desired.

The Levi form on ${}^hT_{\partial\Omega}$ is defined at every point $\mathbf{z}\in\partial\Omega$ by
\begin{align*}
L_{\partial\Omega,\mathbf{z}}(\xi,\eta):=&\left(\frac{\mathrm{d}J\mathrm{d}\rho}{2|\mathrm{d} \rho|}\right)_{\upharpoonright \partial\Omega,\mathbf{z}}(\xi,\bar\eta)\\
=&\frac{1}{|\mathrm{d} \rho|(\mathbf{z})}\sum\limits_{i,j=1}^m
\frac{\partial^2\rho}{\partial z_i\partial\overline{z}_j}(\mathbf{z})\xi_i\overline{\eta}_j,
\quad\xi,\eta\in {}^hT_{\partial\Omega,\mathbf{z}}.\nonumber
\end{align*}
The Levi form does not depend on the particular choice of $\rho$. Indeed, this follows from  \eqref{localbddefn}, \eqref{varrhofrho} and \eqref{fbd} directly.

\begin{lem}\label{lemomegabd}
Let $\Omega\subset\mathbb{C}^m$ be a bounded open set with $C^2$ boundary.
\begin{enumerate}
\item\label{ex1-8.12a}
Let $\mathbf{a}\in\partial\Omega$ be a given point. Let $e_m$ be the outward normal vector to $T_{\partial\Omega,\mathbf{a}},\,(e_1,\dots,e_{m-1})$ an orthonormal basis of ${}^hT_{\partial\Omega,\mathbf{a}}$ in which the Levi form is diagonal and $(z_1,\dots,z_m)$ the associated
linear coordinates centered at $\mathbf{a}.$ Then there is a neighborhood $V$ of $\mathbf{a}$ such that $\partial\Omega\cap V$ is
the graph $\Re z_m=\varphi (z_1, \dots, z_{m-1}, \Im z_m)$ of a function $\varphi$ such that $\varphi(\mathbf{z})=O(|\mathbf{z}|^2)$ and the matrix
$\left(\partial^2\varphi/\partial z_i\partial\bar z_j(\mathbf{a})\right)_{1\leq i,j\leq m-1}=\mathrm{diag}\{\lambda_1,\dots,\lambda_{m-1}\},$ where $\lambda_1,\dots,\lambda_{m-1}$ are the eigenvalues of the Levi form $L_{\partial\Omega,\mathbf{a}}.$
\item\label{ex1-8.12b}
There exists  a local coordinate given by
\begin{align*}
z_m=&w_m+\frac{1}{2}\sum_{1\leq j,\,k\leq m}d_{jk}w_jw_k, \quad \text{with}\quad d_{jk}=d_{kj},\\
z_i=&w_i,\quad 1\leq i\leq m-1,
\end{align*}
on a neighborhood $V'$   of $\mathbf{a} =\mathbf{0}$ such that
 \begin{equation*}
 \Omega\cap V'=V'\cap\left\{-\Re w_m+\sum_{1\leq j\leq m}\lambda_j|w_j|^2+O(|\mathbf{w}|^3)<0\right\},
 \end{equation*}
where $\lambda_1,\cdots,\lambda_{m-1}$ are the eigenvalues of the Levi form $L_{\partial\Omega,\mathbf{0}}$ and $\lambda_m\in\mathbb{R}$ can be assigned to any given value by a suitable choice of the coordinates.
\end{enumerate}
\end{lem}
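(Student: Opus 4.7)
The plan is to handle the two parts in succession, each resting on the implicit function theorem together with a careful normalization of the Taylor expansion of a $C^2$ local defining function $\rho$ of $\partial\Omega$ near $\mathbf{a}$, combined with the freedom to replace $\rho$ by $\psi\rho$ with $\psi>0$ smooth without changing the boundary.

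For \eqref{ex1-8.12a}, I translate so that $\mathbf{a}=\mathbf{0}$. The gradient $\nabla\rho(\mathbf{0})$ determines a real outward unit normal; a unitary change of coordinates aligns this normal with the $\Re z_m$ axis, so $T_{\partial\Omega,\mathbf{0}}=\{\Re z_m=0\}$ and, by \eqref{thpartialomega}, ${}^hT_{\partial\Omega,\mathbf{0}}=\{z_m=0\}$. A further unitary rotation on $(z_1,\dots,z_{m-1})$ diagonalizes the Hermitian Levi form with eigenvalues $\lambda_1,\dots,\lambda_{m-1}$. Since $\partial\rho/\partial(\Re z_m)(\mathbf{0})\neq 0$, the implicit function theorem produces $\varphi=\varphi(z_1,\dots,z_{m-1},\Im z_m)$ of class $C^2$ with $\partial\Omega\cap V$ the graph $\Re z_m=\varphi$. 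Vanishing of $\mathrm{d}\varphi(\mathbf{0})$ is exactly the condition that the tangent plane at $\mathbf{0}$ be $\{\Re z_m=0\}$, giving $\varphi(\mathbf{z})=O(|\mathbf{z}|^2)$, and restricting the Levi form formula to the chosen basis of ${}^hT_{\partial\Omega,\mathbf{0}}$ identifies it with $(\partial^2\varphi/\partial z_i\partial\bar z_j(\mathbf{0}))_{1\leq i,j\leq m-1}$, which is diagonal by construction.

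For \eqref{ex1-8.12b}, I expand a defining function, which after \eqref{ex1-8.12a} and a positive rescaling reads
\[
\rho=-\Re z_m+\Re\Bigl(\sum_{1\leq j,k\leq m}a_{jk}z_jz_k\Bigr)+\sum_{1\leq j,k\leq m}H_{j\bar k}z_j\bar z_k+O(|\mathbf{z}|^3),
\]
with $a_{jk}=a_{kj}$, $H_{k\bar j}=\overline{H_{j\bar k}}$, and the tangential $(m-1)\times(m-1)$ block of $H_{j\bar k}$ already diagonal by \eqref{ex1-8.12a}. I first clear the off-diagonal Hermitian entries $H_{j\bar m}$ for $j<m$ by replacing $\rho$ with $\psi\rho$, where $\psi=1+\sum_{j<m}(\alpha_jz_j+\overline{\alpha_j}\bar z_j)$; the product of the linear part of $\psi$ with $-\Re z_m$ produces a $(1,1)$-contribution proportional to $\alpha_jz_j\bar z_m+\overline{\alpha_j}\bar z_jz_m$, whose suitable choice of $\alpha_j$ cancels $H_{j\bar m}$. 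This preserves $-\Re z_m$ and the tangential eigenvalues but modifies the holomorphic $(2,0)$ coefficients. The biholomorphism $z_m=w_m+\tfrac{1}{2}\sum d_{jk}w_jw_k$, $z_i=w_i$ for $i<m$, then contributes $-\tfrac{1}{2}\Re(\sum d_{jk}w_jw_k)$ to $-\Re z_m$; choosing $d_{jk}$ equal to twice the resulting $a_{jk}$ absorbs the entire holomorphic quadratic part, while the Hermitian part is unchanged modulo cubic corrections since $z_m=w_m+O(|\mathbf{w}|^2)$. The outcome is $\rho=-\Re w_m+\sum_{j=1}^m\lambda_j|w_j|^2+O(|\mathbf{w}|^3)$, with $\lambda_1,\dots,\lambda_{m-1}$ the Levi eigenvalues.

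The coefficient $\lambda_m$ is not intrinsically tied to the Levi geometry of $\partial\Omega$: a final multiplication of $\rho$ by $1+t\,\Re w_m$ with a free real parameter $t$ shifts $\lambda_m$ by $-t$ while preserving the linear term and the tangential $\lambda_j$ modulo $O(|\mathbf{w}|^3)$, so $\lambda_m$ can be prescribed arbitrarily. The main obstacle is the interleaved bookkeeping: each of the three operations (multiplication by a real smooth factor, biholomorphic change, final rescaling) affects both the $(2,0)$ and $(1,1)$ parts of $\rho$ at quadratic order, and the right order is to first clear the off-diagonal Hermitian entries by $\psi$, then kill all holomorphic quadratic terms by the biholomorphism, and only at the end adjust $\lambda_m$ by the real rescaling, so that previously normalized entries are not reintroduced at quadratic order.
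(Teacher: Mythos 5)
The paper itself offers no argument for this lemma (its ``proof'' is a citation to Exercise I-8.12 of \cite{demaillybook1}), and your outline is the standard solution of that exercise: part (1) via the implicit function theorem after aligning the normal with the $\Re z_m$-axis and diagonalizing the Levi form, and, for part (2), multiplying the defining function by a positive factor $1+\sum_{j<m}(\alpha_j z_j+\overline{\alpha_j}\bar z_j)$ to clear the mixed Hermitian entries $H_{j\bar m}$ and then absorbing the pluriharmonic quadratic part by the substitution $z_m=w_m+\frac12\sum d_{jk}w_jw_k$. Those two steps are carried out correctly (the choice $d_{jk}=2a_{jk}$, and the observation that the Hermitian part only changes at third order, are right).

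The gap is in your last step and in the ordering you insist on. Multiplying the already-normalized defining function by $1+t\,\Re w_m$ adds $-t(\Re w_m)^2=-\frac{t}{2}|w_m|^2-\frac{t}{4}\left(w_m^2+\bar w_m^2\right)$: besides shifting $\lambda_m$ (by $-t/2$, not $-t$, but that is immaterial), it reintroduces the quadratic pluriharmonic term $-\frac{t}{4}(w_m^2+\bar w_m^2)$, which is not $O(|\mathbf{w}|^3)$ and is not of the allowed shape $-\Re w_m+\sum_j\lambda_j|w_j|^2+O(|\mathbf{w}|^3)$. Your claim that the rescaling preserves the normal form ``modulo $O(|\mathbf{w}|^3)$'' only accounts for the linear term and the tangential $\lambda_j$, and your stated reason for doing the rescaling last --- that previously normalized entries are not reintroduced at quadratic order --- is exactly what fails for the $(2,0)$ coefficient of $w_m^2$. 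The repair is easy: perform the multiplication by $1+t\,\Re z_m$ \emph{before} the holomorphic change of coordinates, so the induced $(2,0)$ term is absorbed into the choice of $d_{mm}$ (equivalently, follow your final rescaling by the further substitution $w_m\mapsto w_m-\frac{t}{4}w_m^2$, which perturbs the Hermitian part only at third order). With that reordering the argument is complete; the only residual imprecision --- that a $C^2$ boundary gives a remainder $o(|\mathbf{w}|^2)$ rather than $O(|\mathbf{w}|^3)$ --- is already present in the statement being proved, not in your proof.
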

\begin{proof}See, e.g., \cite[Exercise \rm{I}-8.12]{demaillybook1}.
\end{proof}
\begin{lem}
\label{lembou7.12}
Let $r$ be a smooth function defined near $\mathbf{0}\in\mathbb{R}^m$ with coordinates $x_1,\cdots,x_m$ such that $r_{x_m}(\mathbf{0})=-1$ and $r(\mathbf{0})=r_{x_{i}}(\mathbf{0})=0,\,1\leq i\leq m-1$, and let $N$ denote the hypersurface defined by $\{r=0\}$ which is smooth near $0$. Then $(x_{i})_{\upharpoonright N},\,1\leq i\leq m-1,$ are local coordinates of $N$ and for any smooth function $v$ near $\mathbf{0}$ there holds
\begin{align*}
  \partial_{(x_{i})_{\upharpoonright N}}\left(v_{\upharpoonright N}\right)(\mathbf{0})
  =& v_{x_i}(\mathbf{0})+ v_{x_m}(\mathbf{0}) r_{x_i}(\mathbf{0}),\\
 \partial_{(x_{i})_{\upharpoonright N}} \partial_{(x_j)_{\upharpoonright N}}\left(v_{\upharpoonright N}\right)(\mathbf{0})
=& v_{x_ix_j}(\mathbf{0})+ v_{x_m}(\mathbf{0}) r_{x_ix_j}(\mathbf{0}).
\end{align*}
\end{lem}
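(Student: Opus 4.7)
The plan is to apply the implicit function theorem at $\mathbf{0}$ and then just read off the chain rule. Since $r_{x_m}(\mathbf{0})=-1\neq 0$, there exists a smooth function $\phi=\phi(x_1,\dots,x_{m-1})$, defined on a neighborhood of the origin in $\mathbb{R}^{m-1}$ with $\phi(\mathbf{0})=0$, such that the hypersurface $N=\{r=0\}$ is locally the graph
\[
N=\{(x_1,\dots,x_{m-1},\phi(x_1,\dots,x_{m-1}))\}.
\]
Consequently $(x_1,\dots,x_{m-1})$ restrict to local coordinates on $N$ near $\mathbf{0}$, and for any smooth $v$ defined near $\mathbf{0}$ one has the pullback
\[
(v_{\upharpoonright N})(x_1,\dots,x_{m-1})=v(x_1,\dots,x_{m-1},\phi(x_1,\dots,x_{m-1})).
\]

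Next I would compute $\phi_{x_i}(\mathbf{0})$ and $\phi_{x_ix_j}(\mathbf{0})$ by implicit differentiation of the identity $r(x_1,\dots,x_{m-1},\phi(x'))\equiv 0$. Differentiating once gives $r_{x_i}+r_{x_m}\phi_{x_i}=0$, hence $\phi_{x_i}=-r_{x_i}/r_{x_m}$; using $r_{x_m}(\mathbf{0})=-1$ this yields $\phi_{x_i}(\mathbf{0})=r_{x_i}(\mathbf{0})$. Differentiating again, and then evaluating at $\mathbf{0}$ where $r_{x_i}(\mathbf{0})=0$ for $1\leq i\leq m-1$ causes all cross terms to drop out, one finds
\[
r_{x_ix_j}(\mathbf{0})+r_{x_m}(\mathbf{0})\,\phi_{x_ix_j}(\mathbf{0})=0,
\]
so $\phi_{x_ix_j}(\mathbf{0})=r_{x_ix_j}(\mathbf{0})$.

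Finally I would apply the chain rule to $w(x'):=v(x',\phi(x'))$:
\[
w_{x_i}=v_{x_i}+v_{x_m}\phi_{x_i},\qquad w_{x_ix_j}=v_{x_ix_j}+v_{x_ix_m}\phi_{x_j}+v_{x_mx_j}\phi_{x_i}+v_{x_mx_m}\phi_{x_i}\phi_{x_j}+v_{x_m}\phi_{x_ix_j},
\]
and evaluate at $\mathbf{0}$. Since $\phi_{x_i}(\mathbf{0})=r_{x_i}(\mathbf{0})=0$ for $1\leq i\leq m-1$, the mixed $\phi_{x_i}\phi_{x_j}$ and $v_{x_ix_m}\phi_{x_j}$ terms vanish, and substituting $\phi_{x_i}(\mathbf{0})=r_{x_i}(\mathbf{0})$ and $\phi_{x_ix_j}(\mathbf{0})=r_{x_ix_j}(\mathbf{0})$ produces exactly the two formulas claimed in the lemma.

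There is really no obstacle here; the statement is essentially bookkeeping for the chain rule on a graph with the normalization $r_{x_m}(\mathbf{0})=-1$. The only point requiring attention is that the expression for $\phi_{x_ix_j}(\mathbf{0})$ must be read off \emph{after} invoking $r_{x_i}(\mathbf{0})=0$, so that the generically present second derivatives $r_{x_ix_m}$ and $r_{x_mx_m}$ drop out and the identity $\phi_{x_ix_j}(\mathbf{0})=r_{x_ix_j}(\mathbf{0})$ holds in the clean form needed.
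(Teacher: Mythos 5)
Your argument is correct and complete: parametrize $N$ as a graph $x_m=\phi(x')$ via the implicit function theorem, compute $\phi_{x_i}(\mathbf{0})$ and $\phi_{x_ix_j}(\mathbf{0})$ by implicit differentiation using $r_{x_m}(\mathbf{0})=-1$ and $r_{x_i}(\mathbf{0})=0$, then apply the chain rule to $v(x',\phi(x'))$. The paper itself offers no proof beyond the citation ``See, e.g., \cite[Lemma 7.2]{boucksombd}''; the argument there is the same graph-plus-chain-rule computation, so your route coincides with the intended one and there is no gap.
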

\begin{proof}
See, e.g.,  \cite[Lemma 7.2]{boucksombd}.
\end{proof}
\subsection{Complex Manifolds with Boundary}\label{secboundary} In this subsection, we set $B_R(\mathbf{z})\subset \mathbb{C}^m$ $($resp. $\bar B_R(\mathbf{z})\subset \mathbb{C}^m $$)$  the open $($resp. closed$)$ ball centered at the point $\mathbf{z}$ with radius $R.$
A complex manifold with smooth boundary $M$ of $\dim_{\mathbb{C}}M=m$ is a smooth manifold with boundary
equipped with a system of coordinate patches
\begin{equation*}
 \phi_j:\;U_j\to \left\{\mathbf{z}\in B_R(\mathbf{0}):\; \,r_j(\mathbf{z})\leq0\right\},\quad j\in\text{the index set}\;\mathcal{J}
\end{equation*}
such that
$
\phi_j\circ \phi_i^{-1}
$ is a biholomorphic on $\phi_{i}(U_i\cap U_j)\cap \left\{r_i<0\right\},$ where $r_j$'s are the local definition functions, i.e., smooth functions defined on the neighborhood of  $\bar B_R(\mathbf{0})$ with $\mathrm{d}r_j\not=0$ along $\{r_j=0\}.$

The holomorphic tangent bundle ${}^h T_{\partial M}$ of $\partial M$ is defined as the largest complex subbundle of $T_M$ which is contained $T_{\partial M},$ i.e.,
\begin{equation*}
 {}^h T_{\partial M}:=T_{\partial M}\cap J T_{\partial M},
\end{equation*}
where $J:\;T_M\to T_M$ is the complex structure.

Let $(M,J,g)$ be a complex manifold with smooth boundary of $\dim_{\mathbb{C}}M=m,$ and let $\overrightarrow{\nu}$ be the unit outward  normal vector field on $\partial M.$ Then the Levi form $L_{\partial M,\overrightarrow{\nu}}$ of $\partial M$ with respect to $\overrightarrow{\nu}$ is locally given by
\begin{align}
\label{leviformm}
L_{\partial M,\overrightarrow{\nu}}(\xi,\eta):=&\left(\frac{\mathrm{d}J\mathrm{d}(r_j\circ \phi_j)}{2|\mathrm{d} (r_j\circ \phi_j)|_g}\right)_{\upharpoonright \partial M,\mathbf{z}}(\xi,\bar\eta)\\
=&\frac{1}{2\left(\overrightarrow{\nu}(r_j\circ \phi_j)\right)(\mathbf{z})}\left(\mathrm{d}J\mathrm{d}(r_j\circ \phi_j)\right)(\xi,\bar\eta),
\quad\xi,\eta\in {}^hT_{\partial M,\mathbf{z}},\quad\mathbf{z}\in U_j\cap \partial M.\nonumber
\end{align}
It is easy to check that the expression in \eqref{leviformm} is well defined. The boundary $\partial M$ is called
weakly (resp. strictly) pseudoconcave if $L_{\partial M,\overrightarrow{\nu}}\leq 0$ (resp. $<0$), and  weakly (resp. strictly) pseudoconvex if $L_{\partial M,\overrightarrow{\nu}}\geq 0$ (resp. $>0$).

Throughout this paper, we fix a covering of $\partial M$ consisting of finite open sets $\{U_{i}\}_{i\in\mathcal{J}}$ such that
\begin{equation*}
\phi_i:\;U_{i}\to B_{2,i}(\mathbf{0}):=\left\{\mathbf{z}\in B_2(\mathbf{0}):\;r_i(\mathbf{z})\leq 0\right\}
\end{equation*}
is diffeomorphism,
\begin{equation*}
\phi_{j}\circ\phi_{i}^{-1}:\;\phi_i(U_i\cap U_j)\cap\left\{r_i<0\right\}
\to \phi_j(U_i\cap U_j)\cap\left\{r_j<0\right\}
\end{equation*}
is biholomorphic, and that the family of finite open subsets
\begin{equation*}
\left\{V_i:=\phi_{i}^{-1}(B_{1,i}(\mathbf{0})),\,i\in\mathcal{J}\right\}
\end{equation*}
still covers the boundary $\partial M$.

We denote the local coordinates and the definition function of $\partial M$ on $U_i$ by
\begin{equation*}
\mathbf{w}^{(i)}:=\mathbf{w}\circ\phi_{i},\quad \rho_i:=r_i\circ\phi_{i},\quad \forall\;i\in\mathcal{J}.
\end{equation*}
Fix a index $i\in\mathcal{J}$. For any point $\mathbf{p}\in V_i\cap \partial M$, there exists a biholomorphic map (see for example \cite[Formula (1.6)]{chengyau1980})
\begin{equation*}
\psi_i:\;B_{2}(\mathbf{0})\to B_{2}(\mathbf{0}),\quad \mathbf{w}\mapsto \mathbf{z}
\end{equation*}
such that $\psi_i\circ \phi_i(\mathbf{p})=\mathbf{0},$ and that
\begin{equation}
\label{rhocoor}
\rho_i\circ\phi_i^{-1}\circ\psi_i^{-1}(\mathbf{z})=r_i\circ\psi_i^{-1}(\mathbf{z})=
-\Re z_m+ \sum_{1\leq j\leq m}\lambda_j|z_j|^2  +O(|\mathbf{z}|^3),
\end{equation}
where $\lambda_1,\cdots,\lambda_{m-1}$ are the eigenvalues of the Levi form $L_{\partial M,\nu}$ with respect to $g$ and $\lambda_m\in\mathbb{R}$ can be assigned to any given value by a suitable choice of the coordinates.

In the later use, we will not distinguish $r_i,$ $r_i\circ\phi_i$ and $r_i \circ\psi_i^{-1}$ for convenience. We will say that for any point $\mathbf{0}\in \partial M,$ we want to study our questions on the adapted data $(B,r,\mathbf{z})$ where $B=B_{1}(\mathbf{0}),$ $r$ is a definition function of $\partial M,$ and $\mathbf{z}$ is
the coordinate on $B$ centered at $\mathbf{0}$ such that $r$ satisfies \eqref{rhocoor}.

Let $e_1,\cdots,e_m$ be a basis of local frame fields of $T^{1,0}_M$ with dual $\theta^1,\cdots, \theta^m $ which are $(1,0)$ forms. Then we extend the Riemannian metric $g$ to $T_{M}\otimes_{\mathbb{R}}\mathbb{C}$ to obtain
\begin{equation*}
 \omega=\sqrt{-1}\sum_{i,j=1}^mg_{i\bar j}\theta^i\wedge\bar \theta^j,\quad g_{i\bar j}:=g(e_i,\bar e_j),\quad\overline{g_{i\bar j}}=g_{j\bar i}.
\end{equation*}
Let us denote by $\nabla$ the Chern connection of the Hermitian metric $g.$ Then we fix some notations.
\begin{align*}
& \nabla_{i}:=\nabla_{e_i},\quad \nabla_{\bar j}:=\nabla_{\bar e_j},\quad  \nabla_ie_j=:\Gamma_{ij}^ke_k,\\
&\nabla_ie_j-\nabla_je_i-[e_i,e_j]=T(e_i,e_j)=:T_{ij}^ke_k,\\
&\left(\nabla_i\nabla_{\bar j}-\nabla_{\bar j}\nabla_i-\nabla_{[e_i,\bar e_j]}\right)e_k=:R_{i\bar jk}{}^\ell e_\ell,\quad R_{i\bar j k\bar \ell}:=R_{i\bar jk}{}^pg_{p\bar\ell}.
\end{align*}
Denote by $\Delta_g$ the Laplace-Beltrami operator of the Riemannian metric $g$. There holds (see for example \cite[Lemma 3.2]{tosatticag2007})
\begin{align}
\label{2laplace}
\Delta_g \varphi=2\Delta \varphi+\tau(\md \varphi),\quad \forall\;\varphi\in C^2(M,\mathbb{R}),
\end{align}
where
$$
\tau(\md \varphi)=2\Re\left(T_{pj}^jg^{\overline{q}p}\nabla_{\bar q}\varphi\right).
$$
Given a function $u\in C^4(M,\mathbb{R}),$ it follows from the Ricci identity and the first Bianchi identity that (see for example \cite{twcrelle})
\begin{align}
 \label{ricciid0}
 \nabla_i\nabla_{j}u
 =&\nabla_{j}\nabla_iu-T_{ij}^k\nabla_ku,\\
 \label{ricciid1}\nabla_i\nabla_{\bar j}\nabla_k u
=&\nabla_k\nabla_i\nabla_{\bar j}u-T_{ik}^p\nabla_p\nabla_{\bar j}u,\\
\label{ricciid2}
\nabla_i\nabla_{\bar j}\nabla_{\bar k}u
=&\nabla_{\bar k}\nabla_i\nabla_{\bar j}u
-\overline{T_{jk}^q}\nabla_{\bar q}\nabla_iu
+\overline{R_{j\bar ik}{}^q}\nabla_{\bar q}u,\\
\label{ricciid2times}
\nabla_{\bar \ell}\nabla_k\nabla_{\bar j}\nabla_iu
=&\nabla_{\bar j}\nabla_i\nabla_{\bar \ell}\nabla_ku
+R_{k\bar\ell i}{}^p\nabla_{\bar j}\nabla_pu
-R_{i\bar j k}{}^p\nabla_{\bar\ell}\nabla_pu\\
&-T_{ki}^p\nabla_{\bar\ell}\nabla_{\bar j}\nabla_p u
-\overline{T_{\ell j}^q}\nabla_{k}\nabla_{\bar q}\nabla_i u
-T_{ki}^p\overline{T_{\ell j}^q}\nabla_{\bar q}\nabla_p u.\nonumber
\end{align}
\subsection{Subsolutions}\label{secsubsolution}
In this subsection, we recall some preliminaries from \cite{gaborjdg} (cf. \cite{trudinger1995,guan2014}). Given any $\sigma\in\left(\sup_{\partial\Gamma}f,\sup_\Gamma f\right)$, the set $\Gamma^\sigma=\{\lambda\in\Gamma:\;f(\lambda)>\sigma\}$ is open and convex 
and $\partial\Gamma^\sigma=f^{-1}(\sigma)$ is a smooth hypersurface. 
We denote, by $\mathbf{n}(\lambda),$  the inward pointing unit normal vector, i.e.,
\begin{equation*}
\mathbf{n}(\lambda):=\frac{\nabla f}{|\nabla f|}(\lambda),\quad \forall \;\lambda\in\partial\Gamma^\sigma.
\end{equation*}
We set $\mathcal{F}(\lambda):=\sum_{k=1}^mf_k(\lambda).$ The Cauchy-Schwarz inequality yields that $|\nabla f|\leq \mathcal{F}\leq \sqrt{m}|\nabla f|$.

Following \cite{trudinger1995}, we set
\begin{equation*}
\Gamma_\infty:=\left\{(\lambda_1,\cdots,\lambda_{m-1}):\;(\lambda_1,\cdots,\lambda_m)\in\Gamma\;\text{for some}\;\lambda_m\right\}.
\end{equation*}
For any $\mu\in\mathbb{R}^m$, the set $\left(\mu+\Gamma_m\right)\cap \partial\Gamma^{\sigma}$ is bounded, if and only if
\begin{equation*}
\lim_{t\to+\infty}f(\mu+t\mathbf{e}_i)>\sigma,\quad\forall\;1\leq i\leq m,
\end{equation*}
where $\mathbf{e}_i$ denotes the $i^{\mathrm{th}}$ standard basis vector.  This limit is well defined as long as any $(m-1)$ tuple $\mu'$ in $\mu$ satisfies $\mu'\in\Gamma_\infty$, i.e., on the set $\tilde \Gamma$ defined by
\begin{equation*}
\tilde\Gamma:=\left\{\mu\in\mathbb{R}^m:\;\exists\;t>0\;\text{such that}\;\mu+t\mathbf{e}_i\in\Gamma\;\forall\;i\right\}.
\end{equation*}
For any $\lambda'=(\lambda_1,\cdots,\lambda_{m-1})\in\Gamma_\infty,$  the concavity of $f$ implies that the limit
\begin{equation*}
\lim_{\lambda_m\to+\infty}f(\lambda_1,\cdots,\lambda_{m-1},\lambda_m)
\end{equation*}
is either finite for all $\lambda'$ or infinite for all $\lambda'$ (see \cite{trudinger1995}).

If the limit is infinite, then $\left(\mu+\Gamma_m\right)\cap \partial\Gamma^\sigma$ is bounded for all $\sigma$ and $\mu\in\tilde\Gamma$. In particular, any admissible $\underline{u}$ is a  $\mathcal{C}$-subsolution, not vice versa.

If the limit is finite, then we define the function $f_\infty$ on $\Gamma_\infty$ by
\begin{equation*}
f_\infty(\lambda_1,\cdots,\lambda_{m-1})=\lim_{t\to+\infty}f(\lambda_1,\cdots,\lambda_{m-1},t).
\end{equation*}
In this case, for $\mu\in\tilde\Gamma,$ the set $\left(\mu+\Gamma_m\right)\cap\partial\Gamma^\sigma$ is bounded if and only if $f_\infty(\mu')>\sigma$, where $\mu'\in\Gamma_\infty$ is any $(m-1)$ tuple of entries of $\mu.$
\begin{prop}[Sz\'ekelyhidi \cite{gaborjdg}]
\label{gaborprop5}
Given $\delta,\,R>0,$ if $\mu\in\mathbb{R}^m$ such that
\begin{equation*}
\left(\mu-2\delta\mathbf{1}+\Gamma_m\right)\cap \partial\Gamma^\sigma\subset B_R(\mathbf{0}),
\end{equation*}
where $B_R(\mathbf{0})\subset\mathbb{R}^m$ is the ball centered at $\mathbf{0}$ with radius $R$,
then there exists a constant $\kappa>0$ depending only on $\delta$ and $\mathbf{n}$ on $\partial\Gamma^\sigma$ such that for any $\lambda\in\partial\Gamma^\sigma$ with $|\lambda|>R$, there holds either
\begin{equation*}
\sum_{j=1}^mf_j(\lambda)(\mu_j-\lambda_j)>\kappa\mathcal{F}(\lambda),
\end{equation*}
or
\begin{equation*}
f_i(\lambda)>\kappa \mathcal{F}(\lambda),\quad \forall\;1\leq i\leq m.
\end{equation*}
\end{prop}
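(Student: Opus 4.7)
My plan is to prove the proposition by analyzing the inward unit normal $\mathbf{n}(\lambda) = \nabla f(\lambda)/|\nabla f(\lambda)|$ on the smooth convex hypersurface $\partial\Gamma^\sigma$ and performing a dichotomy based on whether $\mathbf{n}(\lambda)$ is uniformly separated from the coordinate faces of the nonnegative orthant.

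First I would assemble the basic convex-analytic ingredients. Because each $f_j > 0$, the set $\Gamma^\sigma$ is monotone in the sense $\Gamma^\sigma + \overline{\Gamma_m} \subset \Gamma^\sigma$, and therefore each component $n_j(\lambda) \geq 0$. Since $f$ is concave, the tangent hyperplane is globally supporting:
\begin{equation*}
\sum_{j=1}^{m} f_j(\lambda)(v_j - \lambda_j) \;\geq\; f(v) - f(\lambda) \qquad \text{for every } v \in \Gamma.
\end{equation*}
Moreover, by Assumption \eqref{assum3}, $f(\mu + t\mathbf{1}) \to \sup_{\Gamma} f > \sigma$ as $t \to +\infty$, so the cone $\mu - 2\delta\mathbf{1} + \Gamma_m$ does meet $\partial\Gamma^\sigma$, and the hypothesis $(\mu - 2\delta\mathbf{1} + \Gamma_m) \cap \partial\Gamma^\sigma \subset B_R(\mathbf{0})$ is nonvacuous. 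The parameter $R$ thus records the ``size'' of this convex, bounded intersection.

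Next I would extract pointwise information from the hypothesis. For any $\lambda \in \partial\Gamma^\sigma$ with $|\lambda| > R$, necessarily $\lambda \notin \mu - 2\delta\mathbf{1} + \Gamma_m$, so there is an index $i_0 = i_0(\lambda)$ with $\mu_{i_0} - \lambda_{i_0} \geq 2\delta$. I would then perform the key algebraic decomposition
\begin{equation*}
\sum_{j=1}^{m} f_j(\lambda)(\mu_j - \lambda_j)
= |\nabla f(\lambda)|\,\mathbf{n}(\lambda)\cdot(\mu - 2\delta\mathbf{1} - \lambda) + 2\delta\,\mathcal{F}(\lambda),
\end{equation*}
which reduces the problem to a lower bound on $\mathbf{n}(\lambda)\cdot(\mu - 2\delta\mathbf{1} - \lambda)$.

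Now I would set up the dichotomy. If all components satisfy $n_j(\lambda) \geq \kappa_0$ for some uniform $\kappa_0 > 0$, then $f_j = n_j|\nabla f| \geq \kappa_0|\nabla f| \geq (\kappa_0/\sqrt{m})\,\mathcal{F}$, giving the second alternative after adjusting the constant. Otherwise there is some $j_*$ with $n_{j_*}(\lambda) < \kappa_0$, meaning $\mathbf{n}(\lambda)$ lies within $O(\kappa_0)$ of the coordinate hyperplane $\{n_{j_*} = 0\}$. In this ``degenerate'' regime I would use the supporting hyperplane inequality applied to the points $\mu - 2\delta\mathbf{1} + s\mathbf{1} \in \Gamma^\sigma$ (valid for all sufficiently large $s$ by Assumption \eqref{assum3}), which yields $\mathbf{n}(\lambda)\cdot(\mu - 2\delta\mathbf{1} - \lambda) \geq -s\sum_j n_j(\lambda)$. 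Combining this with the gap $\mu_{i_0} - \lambda_{i_0} \geq 2\delta$ and the boundedness of the intersection (which controls $s$ uniformly in terms of $R$ and $\delta$), I would conclude $\sum_j f_j(\lambda)(\mu_j - \lambda_j) \geq \kappa \mathcal{F}(\lambda)$ for a uniform $\kappa > 0$.

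The main obstacle will be making the degenerate case quantitative: when some $n_{j_*}(\lambda)$ is tiny, the contribution from the tangent-direction coordinates of $\mu - \lambda$ can be large and negative, and one must extract the positive contribution from the coordinate gap at $i_0$ together with the uniform geometry of $\partial\Gamma^\sigma$ encoded by the continuous map $\lambda \mapsto \mathbf{n}(\lambda)$. The final constant $\kappa$ must then be chosen depending only on $\delta$ and the modulus-of-continuity/range of $\mathbf{n}$ on $\partial\Gamma^\sigma$, which is precisely the dependence asserted in the statement.
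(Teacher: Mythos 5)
Your proposal has the right scaffolding but a genuine gap in the degenerate-case step, and it is worth noting that the paper itself does not supply a proof of this proposition (it is quoted from Sz\'ekelyhidi \cite{gaborjdg}), so you are being measured against that original argument.

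The positivity of $\mathbf{n}$, the existence of a gap index $i_0$ with $\mu_{i_0}-\lambda_{i_0}\geq 2\delta$, the identity $\sum_j f_j(\mu_j-\lambda_j)=|\nabla f|\,\mathbf{n}\cdot(\mu-2\delta\mathbf{1}-\lambda)+2\delta\mathcal{F}$, and the dichotomy on whether all $n_j$ are bounded below are all correct. The problem is in the second branch. Applying the supporting-hyperplane inequality at $\lambda$ to the entry point $P:=\mu-2\delta\mathbf{1}+s_0\mathbf{1}\in\partial\Gamma^\sigma\cap B_R$ and multiplying by $|\nabla f|$ yields only
\begin{equation*}
\sum_j f_j(\lambda)(\mu_j-\lambda_j)\;\geq\;(2\delta-s_0)\,\mathcal{F}(\lambda),
\end{equation*}
since the support inequality merely gives $\mathbf{n}(\lambda)\cdot(P-\lambda)\geq 0$. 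The hypothesis does bound $s_0$ in terms of $R$ and the geometry of $\partial\Gamma^\sigma$, but it does not force $s_0<2\delta$. For example, on $\partial\Gamma^\sigma=\{\lambda_1\lambda_2=1\}\subset\Gamma_2$ (so $f=\log\sigma_2$, $\sigma=0$) with $\mu-2\delta\mathbf{1}=(\epsilon,\epsilon)$ one finds $s_0=1-\epsilon$, so your lower bound $(2\delta-s_0)\mathcal{F}$ is strictly negative whenever $\delta<1/2$ and $\epsilon$ is small, while a direct computation gives $\sum_j f_j(\mu_j-\lambda_j)=(\epsilon+2\delta)\mathcal{F}-2$, so the first alternative does hold with $\kappa$ of order $\delta$. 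What you are discarding is exactly the strict gain $\sum_j f_j(P_j-\lambda_j)>0$, which in this example equals $\mathcal{F}-2$ and is itself of order $\mathcal{F}$ once $\lambda$ is far from $B_R$ and the normal degenerates.

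Two further signs that the argument is not yet closed. First, your step for the degenerate case never actually uses the hypothesis $n_{j_*}<\kappa_0$; if that computation worked as stated it would prove the first alternative unconditionally, making the ``either/or'' vacuous, so something must be missing. Second, ``$s$ is controlled uniformly in terms of $R$ and $\delta$'' is not the right statement: the control is in terms of $R$ and the shape of $\partial\Gamma^\sigma$ only, and does not supply $s_0<2\delta-\kappa$. The missing ingredient is a quantitative statement that, for $\lambda\in\partial\Gamma^\sigma$ with $|\lambda|>R$ and a nearly-degenerate normal, the tangent hyperplane at $\lambda$ passes below the compact piece $\partial\Gamma^\sigma\cap B_R$ by an amount comparable to $\mathcal{F}(\lambda)$; this is precisely the dependence on ``$\mathbf{n}$ on $\partial\Gamma^\sigma$'' built into the statement, and it is the obstacle you correctly name in your last paragraph but do not overcome.
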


\begin{lem}[Sz\'ekelyhidi \cite{gaborjdg}]
\label{lem9}
Let $f$ be a smooth symmetric function defined on $\Gamma$ satisfying Assumption \eqref{assum1}, \eqref{assum2} and \eqref{assum3} in the introduction. Then  $\forall\;\sigma\in(\sup_{\partial\Gamma}f,\sup_\Gamma f)$, one infers that
\begin{enumerate}
\item \label{gaborlem91}there exists an $N>0$ depending only on $\sigma$ such that $(\Gamma+N\mathbf{1})\subset \Gamma^\sigma;$
\item \label{gaborlem92}there is a $\tau>0$ depending only on $\sigma$ such that $\mathcal{F}(\lambda)>\tau,\;\forall\,\lambda\in\partial\Gamma^\sigma.$
\end{enumerate}
\end{lem}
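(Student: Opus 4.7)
The plan is to establish (1) first and then deduce (2) from it using only the concavity of $f$ along the $\mathbf{1}$-direction. The crucial auxiliary fact behind (1) is that $\nabla f(\lambda)$ belongs to the dual cone $\Gamma^{*}:=\{\eta\in\R^m:\;\eta\cdot\mu\geq 0\;\forall\,\mu\in\Gamma\}$ at every $\lambda\in\Gamma$. Once that is known, it upgrades the standard $\Gamma_m$-monotonicity of $f$ built into Assumption \eqref{assum1} to monotonicity in the stronger $\Gamma$-order, and both parts of the lemma fall out by shifting by $N\mathbf{1}$.

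To verify $\nabla f\in\Gamma^{*}$, I would argue by contradiction: if $\nabla f(\lambda_0)\cdot\mu_0<0$ for some $\lambda_0,\mu_0\in\Gamma$, then the concavity inequality
\begin{equation*}
f(t\mu_0)\leq f(\lambda_0)+\nabla f(\lambda_0)\cdot(t\mu_0-\lambda_0)
\end{equation*}
forces $f(t\mu_0)\to-\infty$ as $t\to+\infty$, directly contradicting Assumption \eqref{assum3}. Given $\nabla f\in\Gamma^{*}$ (the defining inequality extends to $\overline{\Gamma}$ by continuity), the concavity estimate $f(\nu)\leq f(\mu)+\nabla f(\mu)\cdot(\nu-\mu)$ yields $f(\nu)\leq f(\mu)$ whenever $\mu,\nu\in\Gamma$ with $\mu-\nu\in\overline{\Gamma}$. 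Applying this with $\mu=\lambda+N\mathbf{1}$ and $\nu=N\mathbf{1}$ (so that $\mu-\nu=\lambda\in\Gamma$) gives $f(\lambda+N\mathbf{1})\geq f(N\mathbf{1})$ for every $\lambda\in\Gamma$. Since $\mathbf{1}\in\Gamma_m\subset\Gamma$, Assumption \eqref{assum3} furnishes $N>0$ depending only on $\sigma$ with $f(N\mathbf{1})>\sigma$; for this $N$ one has $\Gamma+N\mathbf{1}\subset\Gamma^{\sigma}$, proving (1).

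For (2), I would pick $\sigma'\in(\sigma,\sup_\Gamma f)$ and apply (1) at level $\sigma'$ to produce $N'>0$, depending only on $\sigma$, with $\Gamma+N'\mathbf{1}\subset\Gamma^{\sigma'}$. For any $\lambda\in\partial\Gamma^{\sigma}\subset\Gamma$ one then has $f(\lambda)=\sigma$ and $f(\lambda+N'\mathbf{1})>\sigma'$, so the concavity tangent-line bound
\begin{equation*}
f(\lambda+N'\mathbf{1})\leq f(\lambda)+N'\mathcal{F}(\lambda)=\sigma+N'\mathcal{F}(\lambda)
\end{equation*}
gives $\mathcal{F}(\lambda)\geq(\sigma'-\sigma)/N'=:\tau>0$ uniformly in $\lambda\in\partial\Gamma^{\sigma}$, with $\tau$ depending only on $\sigma$. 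The genuinely subtle step in this plan is the dual-cone property $\nabla f\in\Gamma^{*}$; it is precisely where Assumption \eqref{assum3} intervenes in an essential way, by forbidding $f$ from being unbounded below along any ray in $\Gamma$.
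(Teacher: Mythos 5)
Your proof is correct. The paper itself gives no proof of this lemma (it is quoted from Sz\'ekelyhidi \cite{gaborjdg}), and your argument is essentially the standard one from that source: the dual-cone property $\nabla f(\lambda)\cdot\mu\geq 0$ for $\mu\in\Gamma$ (the general form of \eqref{filambdai}, forced by concavity plus Assumption \eqref{assum3}) gives $f(\lambda+N\mathbf{1})\geq f(N\mathbf{1})>\sigma$ for part \eqref{gaborlem91}, and the tangent-line inequality at a higher level $\sigma'$ gives the uniform lower bound on $\mathcal{F}$ for part \eqref{gaborlem92}.
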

We need some formulae for the derivatives of eigenvalues (see for example \cite{spruck}).
\begin{lem}[Spruck \cite{spruck}]
\label{lemspruck}
The first and second order derivatives of the eigenvalue $\lambda_i$ at a diagonal matrix $(A_{ij})$ $($consider it as a Hermitian matrix$)$ with distinct eigenvalues are
\begin{align}
\label{eigenvalue1st}\lambda_{i}^{pq}=&\delta_{pi}\delta_{qi},\\
\label{eigenvalue2nd}\lambda_{i}^{pq,rs}=&(1-\delta_{ip})\frac{\delta_{iq}\delta_{ir}\delta_{ps}}{\lambda_i-\lambda_p}
+(1-\delta_{ir})\frac{\delta_{is}\delta_{ip}\delta_{rq}}{\lambda_i-\lambda_r},
\end{align}
where
$$
\lambda_{i}^{pq}=\frac{\partial\lambda_i}{\partial  A_{pq}}, \quad
\lambda_{i}^{pq,rs}=\frac{\partial^2\lambda_i}{\partial  A_{pq}\partial  A_{rs}}.
$$
If we consider $A=(A_{ij})$ as a symmetric matrix, then the right side of \eqref{eigenvalue2nd} should be multiplied by $2.$
\end{lem}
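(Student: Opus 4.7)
The plan is to read off both formulas from standard first-order perturbation theory of simple eigenvalues. Since $A=(A_{ij})$ is diagonal with pairwise distinct eigenvalues $\lambda_1,\dots,\lambda_m$, for any Hermitian direction $B$ the implicit function theorem produces smooth eigenvalues $\lambda_i(t)$ and smooth unit eigenvectors $v_i(t)$ of $A+tB$ near $t=0$ with $v_i(0)=e_i$. Differentiating $(A+tB)v_i(t)=\lambda_i(t)v_i(t)$ once and pairing with $e_i$ makes the terms involving $\dot v_i$ cancel by self-adjointness of $A$, giving $\dot\lambda_i(0)=B_{ii}$. Specializing $B$ to the elementary matrix $E_{pq}$ with a single $1$ in position $(p,q)$ immediately yields \eqref{eigenvalue1st}.

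Pairing the same once-differentiated equation with $e_j$ for $j\ne i$ and imposing the gauge $\langle\dot v_i(0),e_i\rangle=0$ (permitted by the phase/normalization freedom) gives $\langle\dot v_i(0),e_j\rangle=B_{ji}/(\lambda_i-\lambda_j)$. Differentiating the eigenvalue equation a second time and pairing with $e_i$, and using that $t\mapsto A+tB$ is affine so that $\ddot A\equiv 0$, produces
\[
\ddot\lambda_i(0)=2\langle B\,\dot v_i(0),e_i\rangle=2\sum_{j\ne i}\frac{B_{ji}B_{ij}}{\lambda_i-\lambda_j}.
\]
Comparing this with the Taylor expansion $\ddot\lambda_i(0)=\sum_{p,q,r,s}\lambda_i^{pq,rs}B_{pq}B_{rs}$, rewriting $B_{ji}B_{ij}=\sum_{p,q,r,s}\delta_{jp}\delta_{iq}\delta_{ir}\delta_{js}B_{pq}B_{rs}$, and symmetrizing in the swap $(p,q)\leftrightarrow(r,s)$ (which is forced because only the part of $\lambda_i^{pq,rs}$ symmetric in that swap is seen by the quadratic form $B_{pq}B_{rs}$) deliver exactly \eqref{eigenvalue2nd}.

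The step I would watch most carefully is the symmetric-versus-Hermitian distinction responsible for the final factor of $2$: when $A$ is regarded as a real symmetric matrix the entries $A_{pq}$ and $A_{qp}$ are identified, so that the chain rule doubles every partial $\partial/\partial A_{pq}$ with $p\ne q$, and consequently every mixed second partial. Once this bookkeeping is in place, the remainder of the argument is pure linear algebra and carries no analytic subtlety beyond the smooth dependence of simple eigenvalues on the matrix entries.
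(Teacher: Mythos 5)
Your derivation of \eqref{eigenvalue1st} and \eqref{eigenvalue2nd} is correct and is the standard first-order perturbation-theory argument; the paper itself gives no proof of this lemma, citing Spruck, so there is no competing argument to compare against. One small caution: your explanation of the final factor-of-two remark for real symmetric matrices is not quite accurate as stated. The assertion that identifying $A_{pq}$ with $A_{qp}$ "doubles every partial $\partial/\partial A_{pq}$, and consequently every mixed second partial" does not literally hold: for the first derivative the doubling is vacuous since $\lambda_i^{pq}=\delta_{ip}\delta_{iq}$ vanishes for $p\neq q$, and for the second derivative the chain-rule identification of $A_{pq}$ and $A_{qp}$ produces a sum of four unrestricted second partials of the form $\tilde\lambda_i^{p'q',r's'}$ with $(p',q')\in\{(p,q),(q,p)\}$ and $(r',s')\in\{(r,s),(s,r)\}$, and these four terms are \emph{not} all equal (e.g.\ $\tilde\lambda_i^{ij,ji}\ne\tilde\lambda_i^{ij,ij}$), so the result is not simply twice a single entry of the Hermitian Hessian. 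This is a convention-level remark that the paper also leaves unargued and never actually invokes, so it does not affect the rest of your proof; but if you wish to defend the factor of two you would need to specify precisely which bookkeeping convention for second derivatives on the space of symmetric matrices is intended, rather than appealing to a generic chain-rule doubling.
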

\begin{lem}[Gerhardt \cite{gerhardt}]
\label{lemgerhardt}
If $F(A)=f(\lambda_1,\cdots, \lambda_m)$ in terms of a smooth symmetric funtion of the eigenvalues, then at a diagonal matrix $(A_{ij})$ $($consider it as a Hermitian matrix$)$ with distinct eigenvalues there hold
\begin{align}
\label{f1stdaoshu}F^{ij}=&\delta_{ij} f_i,\\
\label{f2nddaoshu}F^{ij,rs}=&f_{ir}\delta_{ij}\delta_{rs}+\frac{f_i-f_j}{\lambda_i-\lambda_j}(1-\delta_{ij})\delta_{is}\delta_{jr},
\end{align}
where
$$
F^{ij}=\frac{\partial F}{\partial  A_{ij}}, \quad F^{pq,rs}=\frac{\partial^2F}{\partial  A_{ij}\partial  A_{rs}}.
$$
If we consider $A=(A_{ij})$ as a symmetric matrix, then the second term in the right side of \eqref{f2nddaoshu} should be multiplied by $2.$
\end{lem}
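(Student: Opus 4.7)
The plan is to derive both formulas by a direct chain rule computation, using Lemma \ref{lemspruck} as a black box to compute first and second order derivatives of the eigenvalues at the diagonal matrix with distinct eigenvalues. Since $F(A)=f(\lambda_1(A),\dots,\lambda_m(A))$, the first order formula is immediate: applying the chain rule gives
\begin{equation*}
F^{pq}=\sum_{i=1}^m f_i\,\lambda_i^{pq},
\end{equation*}
and substituting $\lambda_i^{pq}=\delta_{pi}\delta_{qi}$ from \eqref{eigenvalue1st} collapses the sum to $\delta_{pq}f_p$, which is \eqref{f1stdaoshu}.

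For the second order formula I would differentiate once more and separate into two pieces,
\begin{equation*}
F^{ij,rs}=\sum_{k,\ell}f_{k\ell}\,\lambda_k^{ij}\lambda_\ell^{rs}+\sum_k f_k\,\lambda_k^{ij,rs}.
\end{equation*}
Evaluated at the diagonal matrix, the first piece reduces by \eqref{eigenvalue1st} to $f_{ir}\delta_{ij}\delta_{rs}$ (the sum is non-zero only when $k=i=j$ and $\ell=r=s$). For the second piece I would plug in \eqref{eigenvalue2nd}; the two terms there are supported on $k=j=r$ and $k=i=s$ respectively, and in both cases the Kronecker deltas force $j=r$ and $i=s$, so both terms carry the common factor $(1-\delta_{ij})\delta_{is}\delta_{jr}$. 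Combining the two resulting fractions gives
\begin{equation*}
\frac{f_j}{\lambda_j-\lambda_i}+\frac{f_i}{\lambda_i-\lambda_j}=\frac{f_i-f_j}{\lambda_i-\lambda_j},
\end{equation*}
which yields precisely the second term in \eqref{f2nddaoshu}.

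There is no real obstacle — the argument is a bookkeeping exercise. The only point deserving care is the careful collapsing of four indices $i,j,r,s$ under the constraints imposed by the product of Kronecker deltas in \eqref{eigenvalue2nd}, and the observation that the two apparently different summands in $\sum_k f_k \lambda_k^{ij,rs}$ in fact live on the same set of $(i,j,r,s)$ tuples and combine algebraically into a single difference quotient. Finally, the parenthetical remark about a factor of $2$ when $A$ is treated as a real symmetric matrix follows immediately from the corresponding doubling in Lemma \ref{lemspruck}, since the derivation above uses only the chain rule and that lemma's statement; the Hermitian versus symmetric convention enters exclusively through the second derivatives of the eigenvalues, so the same factor propagates to the second term of \eqref{f2nddaoshu}.
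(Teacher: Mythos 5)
Your argument is correct. The paper itself does not give a proof of Lemma~\ref{lemgerhardt} --- it cites Gerhardt \cite{gerhardt} and lets the statement stand --- so there is no "paper's proof" to compare against, but your derivation is exactly the standard one: chain rule applied to $F(A)=f(\lambda(A))$, then substitution of the first- and second-derivative formulae for the eigenvalues from Lemma~\ref{lemspruck}. The bookkeeping in the mixed second derivative is where errors usually creep in, and you handled it correctly: the $f_{k\ell}\lambda_k^{ij}\lambda_\ell^{rs}$ piece collapses to $f_{ir}\delta_{ij}\delta_{rs}$, and in $\sum_k f_k\lambda_k^{ij,rs}$ the two terms of \eqref{eigenvalue2nd} are supported on $k=j=r,\;i=s$ and $k=i=s,\;r=j$ respectively, both carrying $(1-\delta_{ij})\delta_{is}\delta_{jr}$, which combine as $\frac{f_j}{\lambda_j-\lambda_i}+\frac{f_i}{\lambda_i-\lambda_j}=\frac{f_i-f_j}{\lambda_i-\lambda_j}$. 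Your observation that the symmetric-versus-Hermitian factor of $2$ enters only through $\lambda_k^{ij,rs}$, and hence only multiplies the second term of \eqref{f2nddaoshu}, is also right. This is a clean, self-contained derivation of a result the paper merely quotes.
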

These formulae make sense even if the eigenvalues are not distinct. Indeed, if $f$ is smooth and symmetric, then $f$ is a smooth function of elementary symmetric polynomials which are smooth on the space of matrices by Vieta's formulas and hence $F$ is a smooth function on the space of matrices.
In particular, we have $f_i\longrightarrow f_j$ as $\lambda_i\longrightarrow \lambda_j$. If $f$ is concave and symmetric, then we have that $\frac{f_i- f_j}{\lambda_i-\lambda_j}\leq 0$ (see \cite{spruck} or \cite[Lemma 2]{eh89}). In particular, if $\lambda_i\leq \lambda_j$, then we have $f_i\geq f_j$.

In the local coordinates $(U;z_1,\cdots,z_m),$ let $A=A_i{}^j\mathrm{d}z_i\otimes \partial_j\in \mathrm{End}(T^{1,0}M)$ be a Hermitian map with respect to $g.$ Then we set $A_{i\bar j}:=A_i{}^qg_{q\bar j}$ satisfying $\overline{A_{i\bar j}}=A_{j\bar i}.$
We define a strictly elliptic operator  $L$ by
\begin{align}
\label{defnl}
L(u)=&F^{ij}g^{\overline{q}j}\left(\partial_i\partial_{\overline{q}}u+W_{i\bar q}(\mathrm{d} u)\right)\\
=&F^{ij}g^{\overline{q}j}\left(\nabla_i\nabla_{\overline{q}}u
+W_{i\bar q}^p(\nabla_p u)+\overline{W_{q\bar i}^p}(\nabla_{\bar p} u)\right),\quad \forall\;u\in C^2(M,\mathbb{R}).\nonumber
\end{align}
It is easy to see that $L$ is the linearized operator of $F$ given in \eqref{cma}.
We also use the notation
$F^{i\bar q}:=F^{i j}g^{\bar q j}$ such that $(F^{i\bar q})$ is a positive definite Hermitian matrix.  Indeed, without loss of generality, we set $\lambda_1(A)>\cdots>\lambda_m(A)$ and the general case follows from the continuity arguments.
Let $\xi_p=\xi_p{}^q\partial_q$ be the unit complex eigenvector of $A$ with eigenvalue $\lambda_p,$ i.e.,
we have
\begin{align}
\label{gij}
\sum_{i,j=1}^mg_{i\bar j}\xi_p{}^i\overline{\xi_q{}^j}=&\delta_{pq},\quad p,q=1,\cdots,m,\\
\label{duijiaoa1}
\sum_{r=1}^m\xi_p{}^rA_{r}{}^s=&\lambda_p\xi_{p}{}^s,\quad p,q=1,\cdots,m.
\end{align}
It follows from \eqref{gij} that
\begin{equation}
\label{gbarellk}
g^{\bar \ell k}=\sum_{p=1}^m\overline{\xi_p{}^\ell}\xi_p{}^k,\quad g_{k\bar \ell}=\sum_{p=1}^m\zeta_k{}^p\overline{\zeta_\ell{}^p},
\end{equation}
where $\zeta=(\zeta_i{}^j)$  is the inverse matrix of $\xi,$ i.e., there holds
\begin{equation}
\label{inversxizeta}
\sum_{q=1}^m\xi_i{}^q\zeta_q{}^j=\delta_i{}^j,\quad\quad 1\leq i,j\leq m.
\end{equation}
From \eqref{duijiaoa1} and \eqref{gbarellk}, we get
\begin{align}
\label{duijiaoa2}
 &\sum_{r,s=1}^m\xi_p{}^rA_{r}{}^s\zeta_s{}^q
=\sum_{r,s=1}^m\xi_p{}^rA_{r\bar s}\overline{\xi_{q}{}^s}
=\delta_{pq}\lambda_p,\\
\label{aibarj}
&A_{i\bar j}=\zeta_i{}^p\lambda_p\overline{\zeta_j{}^p}.
\end{align}
We observe from \eqref{aibarj} that $\lambda=(\lambda_1,\cdots,\lambda_m)$ are  the eigenvalues of the Hermitian metric $(A_{i\bar j})$ if $(\xi_i{}^j)$ satisfies $\sum_p\xi_i{}^p\overline{\xi_j{}^p}=\delta_{ij}.$  Using this observation and Lemma \ref{lemspruck}, we can calculate the derivatives of the eigenvalues of a map in $\mathrm{End}(T^{1,0}M).$
Indeed, we set $\theta^i=\zeta_k{}^i\mathrm{d}z^k.$ Then from \eqref{duijiaoa1} one can deduce that
\begin{equation*}
 A=A_k{}^\ell\mathrm{d}z^k\otimes \partial_\ell
=\zeta_k{}^p\lambda_p\xi_p{}^\ell \mathrm{d}z^k\otimes \partial_\ell
=\lambda_p\theta^p\otimes\xi_p
= :\tilde{ A}_i{}^j\theta^i\otimes \xi_j,
\end{equation*}
with $(\tilde A_i{}^j)=(\tilde A_{i\bar j})=(\lambda_i\delta_{ij})$ is a Hermitian matrix and
\begin{equation}
\label{tildeaa}
 \tilde{ A}_i{}^j=\xi_i{}^kA_k{}^\ell \zeta_{\ell}{}^j.
\end{equation}
If $\lambda_k$ is smooth at $(A_i{}^j)$, then it follows from \eqref{eigenvalue1st}, \eqref{eigenvalue2nd}, \eqref{gbarellk}, \eqref{inversxizeta} and \eqref{tildeaa} that
\begin{align}
\label{lambdakij}
\frac{\partial \lambda_k}{\partial A_{i}{}^j}
=&\frac{\partial \lambda_i}{\partial \tilde A_{p}{}^q}
\frac{\partial \tilde A_{p}{}^q}{\partial   A_{i}{}^j}
=\xi_k{}^i\zeta_{j}{}^k=\xi_k{}^ig_{j\bar q}\overline{\xi_k{}^q},\\
\label{lambdakijpq}
\lambda_{k}^{ij,pq}=&
\frac{\partial^2\lambda_k}{\partial \tilde A_r{}^s\partial \tilde A_u{}^v}
\frac{\partial \tilde A_r{}^s}{\partial A_i{}^j}
\frac{\partial \tilde A_u{}^v}{\partial A_p{}^q}\\
=&\sum_{r\not=k}\frac{\xi_r{}^i\zeta_j{}^k\xi_k{}^p\zeta_q{}^r+\xi_k{}^i\zeta_j{}^r\xi_r{}^p\zeta_q{}^k}
{\lambda_k-\lambda_r}\nonumber\\
=&\sum_{r\not=k}\frac{\xi_r{}^ig_{j\bar q}\overline{\xi_k{}^q}\xi_k{}^pg_{q\bar u}\overline{\xi_r{}^u} +\xi_k{}^ig_{j\bar v}\overline{\xi_r{}^v}\xi_r{}^pg_{q\bar s}\overline{\xi_k{}^s}}
{\lambda_k-\lambda_r}.\nonumber
\end{align}
If $(M,g)$ is a Riemannian manifold and $A\in \mathrm{End}(TM),$  then using the local coordinate $(U;x_1,\cdots,x_m),$ we write $A=A_i{}^j\mathrm{d}x^i\otimes \partial_{x_j}$ and $g=g_{ij}\mathrm{d}x_i\otimes \mathrm{d}x_j.$ If $A$ is symmetric with respect to $g,$ then we have
 $A_{ij}:=A_i{}^pg_{pj}$ satisfying $A_{ij}=A_{ji}.$ Let $\xi_i=\xi_i{}^j\partial_{x_j}$ be the eigenvector of $A$ with respect to $\lambda_i,$ i.e.,
\begin{equation*}
 A\xi_i=\lambda_i\xi_i,\quad 1\leq i\leq m.
\end{equation*}
A similar argument yields that $\lambda_i$'s are the eigenvalues of $(A_{ij})$ if $(\xi_i{}^j)$ is orthonormal matrix (see \cite[Lemma 5.2]{ctwjems}).

If all the eigenvalues are smooth at $(A_i{}^j),$ then we can obtain  from \eqref{gbarellk}, \eqref{lambdakij} and \eqref{lambdakijpq} that
\begin{equation}
\label{fijexpression}
F^{ij}
=\sum_{p=1}^m\zeta_j{}^pf_p\xi_p{}^i
=\sum_{p,r=1}^mf_p\xi_p{}^ig_{j\bar r}\overline{\xi_p{}^r},\quad
F^{i\bar q}
=\sum_{p=1}^mf_p\xi_p{}^i\overline{\xi_p{}^q},
\end{equation}
and
\begin{align*}
 F^{ij,pq}
=&f_{k\ell} \xi_k{}^i\zeta_j{}^k\xi_\ell{}^p\zeta_q{}^\ell
+f_{k}\sum_{r\not=k}\frac{\xi_r{}^i\zeta_j{}^k\xi_k{}^p\zeta_q{}^r+\xi_k{}^i\zeta_j{}^r\xi_r{}^p\zeta_q{}^k}
{\lambda_k-\lambda_r}\\
=&f_{k\ell}\xi_k{}^ig_{j\bar u}\overline{\xi_k{}^u}\xi_\ell{}^pg_{q\bar v}\overline{\xi_\ell{}^v}
+f_k\sum_{r\not=k}\frac{\xi_r{}^ig_{j\bar q}\overline{\xi_k{}^q}\xi_k{}^pg_{q\bar u}\overline{\xi_r{}^u} +\xi_k{}^ig_{j\bar v}\overline{\xi_r{}^v}\xi_r{}^pg_{q\bar s}\overline{\xi_k{}^s}}
{\lambda_k-\lambda_r}.\nonumber
\end{align*}
These formulae are sight generalization of Lemma \ref{lemgerhardt}.
 Thanks to \eqref{duijiaoa2} and \eqref{fijexpression}, we deduce that
the matrices $(F^{ij})$ and $(A_i{}^j)$
(hence $(F^{i\bar j})$ and $(A_{i\bar j})$)
can be diagonalized at the same time with
\begin{align}
\label{ijfijaij}
 \sum_{i,j=1}^mF^{ij}A_{i}{}^j
=\sum_{i,q=1}^mF^{i\bar q}A_{i\bar q}
=\sum_{k=1}^mf_k\lambda_k,\\
\label{ijfijaipapj}
\sum_{i,j,p=1}^mF^{ij}A_{i}{}^pA_p{}^j
=\sum_{i,j,p,q=1}^mF^{i\bar j}g^{\bar q p}A_{i\bar q} A_{p\bar j}
=\sum_{k=1}^mf_k\lambda_k^2.
\end{align}
\begin{lem}
\label{guanprop2.19}
Let $(F^{i\bar j})$ and $ (A_{i\bar j})$ be  $m\times m$ Hermitian matrices both of which can be diagonalized at the same time using one unitary matrix, and let $(f_1,\cdots,f_m)\in \Gamma_m$ and $(\lambda_1,\cdots,\lambda_m)\in \mathbb{R}^m$ be the eigenvalues of $(F^{i\bar j})$ and $ (A_{i\bar j})$ respectively. Then  there exists an index $r$ such that
\begin{equation*}
\sum_{\ell=1}^{m-1}F^{i\bar j} A_{i\bar \ell}A_{\ell \bar j}\geq \frac{1}{2}\sum_{k\not=r}f_k\lambda_k^2.
\end{equation*}
\end{lem}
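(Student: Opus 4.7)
My plan is to identify the left-hand side as the trace minus one diagonal entry of the Hermitian matrix $AFA$, and then apply the Rayleigh--Ritz principle to control the missing diagonal entry. By exactly the same index manipulation that yields \eqref{ijfijaipapj}, one verifies that $\sum_{i,j}F^{i\bar j}A_{i\bar\ell}A_{\ell\bar j}=(AFA)_{\ell\bar\ell}$ for each fixed $\ell$, so summing $\ell=1,\dots,m-1$ gives
\[
  \sum_{\ell=1}^{m-1}F^{i\bar j}A_{i\bar\ell}A_{\ell\bar j}=\operatorname{tr}(AFA)-(AFA)_{m\bar m}.
\]

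Since $(F^{i\bar j})$ and $(A_{i\bar j})$ share a common unitary eigenbasis, in that basis both are diagonal with entries $f_k$ and $\lambda_k$; consequently $AFA=FA^2$ is diagonal there with entries $f_k\lambda_k^2$. This yields $\operatorname{tr}(AFA)=\sum_{k=1}^{m}f_k\lambda_k^2$ (matching the specialization of \eqref{ijfijaipapj} to a full Einstein sum) and identifies the eigenvalues of the Hermitian matrix $AFA$ as $\{f_k\lambda_k^2\}_{k=1}^{m}$. By Rayleigh--Ritz, every diagonal entry of a Hermitian matrix is bounded above by its largest eigenvalue, so choosing $r$ with $f_r\lambda_r^2=\max_k f_k\lambda_k^2$ gives $(AFA)_{m\bar m}\le f_r\lambda_r^2$.

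Combining these observations,
\[
  \sum_{\ell=1}^{m-1}F^{i\bar j}A_{i\bar\ell}A_{\ell\bar j}\;\ge\; \sum_{k=1}^{m}f_k\lambda_k^2-f_r\lambda_r^2\;=\;\sum_{k\neq r}f_k\lambda_k^2\;\ge\;\tfrac12\sum_{k\neq r}f_k\lambda_k^2,
\]
the final step using $f_k>0$ (since $(f_1,\dots,f_m)\in\Gamma_m$) and $\lambda_k^2\ge 0$ to ensure every remaining summand is non-negative. There is no substantive obstacle in this argument; the only real point is to select $r$ so that the single eigenvalue $f_r\lambda_r^2$ dropped on the right dominates the one diagonal entry $(AFA)_{m\bar m}$ lost on the left.
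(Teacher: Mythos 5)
Your proof is correct, and it takes a genuinely different route from the paper's. The paper first computes explicitly, in the notation of the preceding paragraph with $g=\delta$, that
\[
\sum_{\ell=1}^{m-1}F^{i\bar j}A_{i\bar\ell}A_{\ell\bar j}=\sum_{p=1}^m f_p\lambda_p^2\bigl(1-|\xi_p{}^m|^2\bigr),
\]
and then, using $\sum_p|\xi_p{}^m|^2=1$, splits into two cases: if no $|\xi_r{}^m|^2$ exceeds $\tfrac12$ the factor $\tfrac12$ can be pulled out directly for any $r$, while if some $|\xi_r{}^m|^2>\tfrac12$ then all other weights $1-|\xi_p{}^m|^2$ ($p\ne r$) exceed $\tfrac12$ and one drops the $r$-th term. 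Your argument starts from the same identity (the quantity $\sum_{i,j}F^{i\bar j}A_{i\bar\ell}A_{\ell\bar j}$ is the $\ell$-th diagonal entry of a Hermitian matrix unitarily similar to $\operatorname{diag}(f_k\lambda_k^2)$ — equivalently, $\sum_p f_p\lambda_p^2|\xi_p{}^\ell|^2$, a convex combination of $f_k\lambda_k^2$), but then bypasses the case split entirely: the dropped diagonal entry is bounded by the largest eigenvalue $f_r\lambda_r^2$, so subtracting it from the trace leaves $\sum_{k\ne r}f_k\lambda_k^2$. This is cleaner and in fact yields the sharper constant $1$ in place of $\tfrac12$; your $r$ (the maximizer of $f_k\lambda_k^2$) is generally a different index from the paper's (the index of the dominant entry in the $m$-th column of $\xi$), but the lemma only asks for existence of some $r$. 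One small caveat on bookkeeping: with the paper's index conventions the matrix whose $(\ell,\ell)$-entry this is is, strictly, $A\,\overline{F}\,A$ (i.e.\ $A F^{T}A$), not the naive product $AFA$; but since $\overline{F}$ is Hermitian with the same eigenvalues $f_k$ and is simultaneously diagonalized with $A$ by the same unitary arising from the paper's setup, the matrix in question is still Hermitian with eigenvalues $f_k\lambda_k^2$, and the Rayleigh--Ritz bound on diagonal entries applies exactly as you use it.
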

\begin{proof} This is a Hermitian version of \cite[Proposition 2.19]{guan2014}.
We use the notations in the above paragraph with $g_{i\bar j}=\delta_{ij},$ and obtain
\begin{equation*}
\mathbb{R}\ni\sum_{\ell=1}^{m-1}F^{i\bar j} A_{i\bar \ell}A_{\ell \bar j}=\sum_{p=1}^mf_p\lambda_p^2(1-\zeta_m{}^p\xi_p{}^m)
=\sum_{p=1}^mf_p\lambda_p^2(1-\overline{\xi_p{}^m}\xi_p{}^m).
\end{equation*}
Note that$\sum_{p=1}^m\overline{\xi_p{}^m}\xi_p{}^m=1.$  Suppose that there exists some $r$ such that $\overline{\xi_r{}^m}\xi_r{}^m>1/2;$ otherwise we are done. One infers that
 $$
\sum_{p\not=r}\overline{\xi_p{}^m}\xi_p{}^m<1/2,
$$
and hence
\begin{equation*}
 \sum_{\ell=1}^{m-1}F^{i\bar j} A_{i\bar \ell}A_{\ell \bar j}\geq \sum_{p\not=r}f_p\lambda_p^2(1-\zeta_m{}^p\xi_p{}^m)
>\frac{1}{2}\sum_{p\not=r} f_p\lambda_p^2.
\end{equation*}
\end{proof}
\subsection{Existence of Admissible Subsolutions}
As pointed out in \cite{gaborjdg}, it is meaningful to find geometric conditions under which the admissible subsolution exists. If $M\subset\mathbb{R}^m$ is a bounded open set, then the authors \cite{cnsacta,trudinger1995} prove that the subsolutions exist under suitable convexity type condition on the boundary. Li \cite{lisongying2004} proves corresponding results for bounded open set $M\subset\mathbb{C}^m.$
\section{A Preliminary Estimate}\label{seczerofirstbd}
\begin{thm}
Let $(M,J,g)$ be a compact Hermitian manifold with smooth boundary, $\dim_{\mathbb{C}}M=m$, and the canonical complex structure $J$, where $g$ is the Hermitian metric. Suppose that $\underline{u}\in C^2(M,\mathbb{R})$ is an admissible  subsolution to \eqref{cma}-\eqref{cmabv} and that $u\in C^2(M,\mathbb{R})$ is a solution to \eqref{cma}-\eqref{cmabv}. There exists a uniform constant $C$ depending only on background data $(M,J,g),\,\varphi$ and $\underline{u}$ such that
\begin{equation}
\label{equzeroorder}
\sup_{M}|u|+\sup_{\partial M}|\partial u|_g\leq C.
\end{equation}
\end{thm}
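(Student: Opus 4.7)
The plan is to separate the $C^0$ bound on $M$ from the normal gradient bound on $\partial M$ (tangential derivatives being automatically controlled by $\|\varphi\|_{C^1(\partial M)}$), and within each to split the two-sided estimate into two one-sided estimates handled by different mechanisms. For the $C^0$ estimate, the lower bound $u \geq \underline{u}$ would come from a linearization argument: by concavity of $F$,
\[
 0 \leq F(\vartheta_{\underline{u}}^\flat) - F(\vartheta_u^\flat) \leq F^{i\bar j}(\vartheta_u^\flat)(\vartheta_{\underline{u}} - \vartheta_u)_{i\bar j},
\]
and because $W$ is linear in $\mathrm{d}u$, the right-hand side coincides with $L(\underline{u} - u)$ for the linearization $L$ defined in \eqref{defnl}. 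Since $L$ is elliptic, carries no zeroth-order term, and $\underline{u} - u \equiv 0$ on $\partial M$, the weak maximum principle forces $\underline{u} \leq u$ throughout $M$. For the upper bound $u \leq C$, linearization alone is no help (the inequality on $F$ sits on the wrong side), so I would apply a B\l{}ocki-type complex Alexandrov--Bakelman--Pucci estimate to $\phi := u - \underline{u} \geq 0$ (vanishing on $\partial M$) in local charts covering $M$: the ABP inequality bounds $\sup_M \phi$ by an integral of $\det(\partial_i\partial_{\bar j}\phi)$ over the contact set of $\phi$ with its concave envelope, on which $\sqrt{-1}\partial\bar\partial\phi \leq 0$, so $\vartheta_u \leq \vartheta_{\underline{u}}$ modulo controlled gradient terms, and the relevant determinant is then dominated using the equation $f(\lambda(\vartheta_u^\flat)) = h$, the inclusion $\Gamma \subset \Gamma_1$, and the known control on $\lambda(\vartheta_{\underline{u}}^\flat)$.

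For the normal gradient on $\partial M$, one side is immediate from the zero-order step: since $u - \underline{u} \geq 0$ in $M$ with equality on $\partial M$, the outward normal derivative satisfies $\partial_{\overrightarrow{\nu}} u \leq \partial_{\overrightarrow{\nu}} \underline{u} \leq C$ along $\partial M$. For the opposite inequality I would construct a local upper barrier of the form
\[
 \bar{v} = \underline{u} - A\rho + B\rho^2
\]
in a tubular neighborhood $\{-\delta < \rho \leq 0\}$ of $\partial M$, with $\rho$ a smooth defining function satisfying $\rho < 0$ inside $M$ and $|\mathrm{d}\rho|_g = 1$ on $\partial M$, and with $A$, $B$, $\delta$ to be chosen. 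Using the admissibility of $\underline{u}$ and the uniform lower bound $\mathcal{F} \geq \tau > 0$ on the relevant level set of $f$ supplied by Lemma \ref{lem9}\eqref{gaborlem92}, one calibrates the constants so that $L(\bar{v} - u) \leq 0$ in the tube, with $\bar{v} = u$ on $\{\rho = 0\}$ and $\bar{v} \geq u$ on $\{\rho = -\delta\}$ (invoking the already-established $C^0$ bound on $u - \underline{u}$ to make the latter quantitative); the comparison principle then yields $u \leq \bar{v}$ in the tube, and comparing normal derivatives at $\partial M$ gives $\partial_{\overrightarrow{\nu}} u \geq \partial_{\overrightarrow{\nu}} \bar{v} = \partial_{\overrightarrow{\nu}} \underline{u} - A$, as required.

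The main obstacle is the barrier step: the linearization $L$ carries genuine first-order terms originating from $W(\mathrm{d}u)$, and because no $C^1$ bound on $u$ is yet available, these terms cannot simply be discarded. They must instead be absorbed into the leading contribution of $B\rho^2$ inside $L\bar{v}$, which after differentiation produces a multiple of $\mathcal{F}\cdot|\partial\rho|_g^2$; the required domination is precisely what the positive lower bound on $\mathcal{F}$ furnished by Lemma \ref{lem9}\eqref{gaborlem92}, combined with the strict ellipticity inherited from the admissibility of $\underline{u}$, is designed to supply. Once this calibration is in place, all constants depend only on the background geometry $(M, J, g)$, on $\varphi$, and on $\underline{u}$, as claimed.
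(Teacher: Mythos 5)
Your lower bound $\underline{u}\leq u$ is fine and is essentially the paper's argument (the paper integrates the linearization along the segment from $u$ to $\underline{u}$ rather than using the concavity tangent inequality, but these are interchangeable). The two places where you depart from the paper both contain genuine gaps. First, the ABP step for the upper bound: you cannot run a B\l{}ocki/ABP estimate for $\phi=u-\underline{u}$ ``in local charts covering $M$,'' because on the boundary of an interior chart $\phi$ is not controlled — its size there is exactly the quantity $\sup_M\phi$ you are trying to bound, so the chart-by-chart ABP is circular. Even globally, on the upper contact set one only gets $-\phi_{i\bar j}=(\vartheta_{\underline u}-\vartheta_u)_{i\bar j}-W_{i\bar j}(\mathrm{d}\phi)\geq 0$, and the inclusion $\Gamma\subset\Gamma_1$ in \eqref{cone} gives only $\operatorname{tr}_\omega\vartheta_u>0$, so the best pointwise bound is $\det(-\phi_{i\bar j})\leq C(1+|\mathrm{d}\phi|)^m$; since on the contact set $|\mathrm{d}\phi|$ is only bounded by $\sup\phi/\operatorname{dist}(\cdot,\partial M)$, the resulting integral is both divergent near $\partial M$ and circular in $\sup\phi$. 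The paper avoids all of this: $\operatorname{tr}_\omega\vartheta_u>0$ says $u$ is a subsolution of the \emph{linear} problem $g^{\bar ji}(\partial_i\partial_{\bar j}v+W_{i\bar j}(\mathrm{d}v))=-g^{\bar ji}\chi_{i\bar j}$ with boundary data $\varphi$, so $u\leq\tilde\varphi$ by the maximum principle, where $\tilde\varphi$ is the (background-determined) solution of that linear Dirichlet problem.

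Second, the boundary barrier. The mechanism you propose does not work as stated: for $\bar v=\underline u-A\rho+B\rho^2$ one has $L(\rho^2)=2F^{i\bar q}\rho_i\rho_{\bar q}+2\rho L(\rho)$, so with the sign $+B\rho^2$ the term $2BF^{i\bar q}\rho_i\rho_{\bar q}\geq 0$ enters $L\bar v$ with the \emph{wrong} sign and cannot absorb anything; and even after flipping the sign, $F^{i\bar q}\rho_i\rho_{\bar q}$ is bounded below only by $(\min_i f_i)\,|\partial\rho|_g^2$, not by $c\,\mathcal{F}|\partial\rho|_g^2$. The bound $\mathcal{F}\geq\tau$ from Lemma \ref{lem9}\eqref{gaborlem92} controls the sum of the $f_i$, not the smallest one, which for equations like Monge--Amp\`ere can be arbitrarily small relative to $\mathcal{F}$; making such a barrier work is exactly what forces the paper, in its \emph{second-order} boundary estimates, to invoke the dichotomy of Proposition \ref{gaborprop5} together with the term $L(\underline u-u)\leq 0$ of \eqref{fabneg}, splitting into the cases $|\lambda|\leq R$ and $|\lambda|>R$. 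For the gradient estimate itself no such barrier is needed: once $\underline u\leq u\leq\tilde\varphi$ with all three functions equal to $\varphi$ on $\partial M$, the tangential derivatives of $u$ on $\partial M$ are those of $\varphi$ and the normal derivative is pinched between those of $\underline u$ and $\tilde\varphi$, which is precisely how the paper concludes \eqref{equzeroorder}. Your one-sided observation $\partial_{\overrightarrow{\nu}}u\leq\partial_{\overrightarrow{\nu}}\underline u$ is correct, but the opposite side should come from the upper barrier $\tilde\varphi$ rather than from the $\mathcal{F}\geq\tau$ calibration you describe.
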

\begin{proof}
It follows from \eqref{fijexpression} that $(F^{i\bar j}(A))$ has eigenvalues $f_1(\lambda),\cdots,f_m(\lambda)$ and hence is positive definite. Then we have
\begin{align*}
 F (\vartheta_{\underline{u}}^\flat )
-F (\vartheta_{u}^\flat )
=&\int_0^1\frac{\mathrm{d}}{\mathrm{d}t}F (\vartheta_{t\underline{u}+(1-t)u}^\flat  )\nonumber\\
=&\left(\int_0^1F^{i\bar q}\mathrm{d}t\right)  \Big( (\underline{u}-u)_{i\bar q}+W_{i\bar q}(\mathrm{d} (\underline{u}-u)\Big)\geq 0.\nonumber
\end{align*}
The maximum principle yields that
 \begin{equation}
 \label{underlineuu}
 \underline{u}\leq u,\quad \text{on}\; M.
 \end{equation}
 On the other hand, the definition of $\Gamma$ implies that $u$ satisfies
 \begin{equation*}
 \left\{\begin{array}{rl}
g^{\bar j i}\left(\chi_{i\bar j}+\partial_i\partial_{\bar j}u+W_{i\bar j}(\mathrm{d}u)\right)>0,&\quad \text{in} \quad M, \\
 u=\varphi,&\quad \text{on}\quad\partial M.
 \end{array}\right.
 \end{equation*}
 Hence one deduces that
 \begin{equation}
 \label{uh}
   u\leq \tilde\varphi,\quad\text{on}\;M
 \end{equation}
 by the maximum principle, where $\tilde\varphi$ is the solution to the Dirichlet problem
 \begin{equation*}
 \left\{\begin{array}{rl}
g^{\bar j i}\left(\chi_{i\bar j}+\partial_i\partial_{\bar j}\tilde\varphi+W_{i\bar j}(\mathrm{d} \tilde\varphi)\right)=0,&\quad \text{in} \quad M, \\
 \tilde\varphi=\varphi,&\quad \text{on}\quad\partial M.
 \end{array}\right.
 \end{equation*}
Now \eqref{equzeroorder} follows from \eqref{underlineuu} and \eqref{uh}.
\end{proof}
\section{Second Order Estimate on the Boundary}\label{sec2orderbd}
In this section, we prove the second order estimates of the solution $u$ to \eqref{cma} on the boundary.
\begin{thm}
\label{thm2ndonbd}
Let $(M,J,g)$ be a compact Hermitian manifold with smooth boundary, $\dim_{\mathbb{C}}M=m$, and the canonical complex structure $J$, where $g$ is the Hermitian metric. Suppose that $\underline{u}\in C^4(M,\mathbb{R})$ is an admissible  subsolution to \eqref{cma}-\eqref{cmabv} and that $u\in C^4(M,\mathbb{R})$ is a solution to \eqref{cma}-\eqref{cmabv}. There exists a uniform constant $C_K$ depending only on background data  and $K$ such that
\begin{equation*}
\sup_{\partial M}|\mathrm{Hess}_gu|_g \leq C_K,
\end{equation*}
where $\mathrm{Hess}_gu$ is the Hessian of $u$ with respect to the Levi-Civita connection of $g$ and $K:=1+\sup_{M}|\partial u|_g^2.$
\end{thm}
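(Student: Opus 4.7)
Fix a boundary point $\mathbf{0}\in\partial M$ and work in the adapted data $(B,r,\mathbf{z})$ of Section \ref{secboundary} with $r$ in the normal form \eqref{rhocoor}; let $\overrightarrow{\nu}$ denote the outward unit normal at $\mathbf{0}$. Set $v:=u-\underline{u}$; then $v\ge 0$ on $M$ by \eqref{underlineuu} and $v\equiv 0$ on $\partial M\cap B$. The real Hessian $\mathrm{Hess}_g u(\mathbf{0})$ splits relative to $\overrightarrow{\nu}$ into three blocks: \emph{pure tangential}, \emph{tangential-normal}, and the \emph{double-normal} scalar, which I bound in turn (constants may depend on $K$). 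For the pure tangential block, since $(u-\varphi)_{\upharpoonright\partial M\cap B}\equiv 0$, Lemma \ref{lembou7.12} applied to $u-\varphi$ expresses the tangential second derivatives of $u$ at $\mathbf{0}$ in terms of the smooth background data $\varphi, r$ and the single scalar $\overrightarrow{\nu}(u-\varphi)(\mathbf{0})$; combined with $|\partial u|_g\le\sqrt K$, this yields the tangential bound $\le C\sqrt K$.

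For the tangential-normal block I use a Sz\'ekelyhidi-Guan barrier argument. For each smooth first-order tangential operator $\mathcal{T}$ (meaning $\mathcal{T}r\equiv 0$ near $\mathbf{0}$), the goal is to bound $\overrightarrow{\nu}(\mathcal{T}(u-\varphi))(\mathbf{0})$. Consider on a small half ball $B_\delta\cap M$ the test function
\begin{equation*}
\Phi:=A_1 v+A_2|\mathbf{z}|^2-A_3 r\;\pm\;\mathcal{T}(u-\varphi),
\end{equation*}
with $A_1\gg A_2\gg A_3\gg 1$. The concavity of $f$ together with the subsolution inequality $F(\vartheta_{\underline{u}}^\flat)\ge h=F(\vartheta_u^\flat)$ yields $L(v)\le 0$; refining via Proposition \ref{gaborprop5} with $\mu=\lambda(\vartheta_{\underline{u}}^\flat)$ produces the dichotomy that either $-L(v)\ge\kappa\mathcal{F}$ or each $f_i\ge\kappa\mathcal{F}$ (so $L(|\mathbf{z}|^2)\gtrsim\mathcal{F}$) once $|\lambda(\vartheta_u^\flat)|>R$, while on the complementary bounded region Lemma \ref{lem9} gives $\mathcal{F}\ge\tau>0$. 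Meanwhile $L(r)$ and $L(\mathcal{T}(u-\varphi))$ are dominated by $C_K(\mathcal{F}+1)$ via \eqref{defnl}, the Ricci identity \eqref{ricciid1}, the hypotheses \eqref{zu1}-\eqref{zu2} (or \eqref{secondwdu}) on $W(\mathrm{d}u)$, and $|\partial u|_g\le\sqrt K$. Choosing $A_3, A_2, A_1$ successively large, one achieves $L(\Phi)\le 0$ on $B_\delta\cap M$; since $\Phi(\mathbf{0})=0$ and $\Phi\ge 0$ on the remaining boundary of $B_\delta\cap M$, the maximum principle forces $\overrightarrow{\nu}(\Phi)(\mathbf{0})\ge 0$, yielding $|\overrightarrow{\nu}(\mathcal{T}(u-\varphi))(\mathbf{0})|\le C_K$.

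The double-normal entry is the principal obstacle. Pick an $\omega$-orthonormal $T^{1,0}M$-frame $\{e_1,\dots,e_m\}$ at $\mathbf{0}$ with $e_m$ spanning the complex normal line, and set $A_{i\bar j}:=\vartheta_u(e_i,\bar e_j)(\mathbf{0})$. The two previous steps and \eqref{ricciid0} bound the tangential principal submatrix $(A_{i\bar j})_{1\le i,j\le m-1}$ and the mixed entries $A_{i\bar m},A_{m\bar i}$ ($i<m$) by $C_K$, so only $A_{m\bar m}$ is unknown. Since $\lambda(\vartheta_u^\flat)\in\Gamma\subset\Gamma_1$ by \eqref{cone}, $A_{m\bar m}$ is bounded from below via $\sum_i\lambda_i(\vartheta_u^\flat)=\mathrm{tr}_\omega\vartheta_u$, so it suffices to rule out $A_{m\bar m}\to+\infty$. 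Assume the contrary: a standard matrix-perturbation argument shows that the smallest $m-1$ eigenvalues of $A^\flat$ converge to the bounded eigenvalues $\tilde\lambda_1,\dots,\tilde\lambda_{m-1}$ of the tangential $(m-1)\times(m-1)$ block while the largest blows up with $A_{m\bar m}$. Hence for $A_{m\bar m}$ large enough, $\lambda(A^\flat)-2\delta\mathbf{1}$ eventually lies in $\lambda(\vartheta_{\underline{u}}^\flat)+\Gamma_m$ for the fixed $\delta>0$ furnished by Definition \ref{csubsol}. The boundedness of $(\lambda(\vartheta_{\underline{u}}^\flat)-2\delta\mathbf{1}+\Gamma_m)\cap\partial\Gamma^{h(\mathbf{0})}$, combined with $f(\lambda(A^\flat))=h(\mathbf{0})$, contradicts $|\lambda(A^\flat)|\to\infty$; quantifying via Proposition \ref{gaborprop5} gives the uniform bound $A_{m\bar m}\le C_K$, and the proof is complete.
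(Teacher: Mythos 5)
There are two genuine gaps, one in the tangential-normal step and a more fundamental one in the double-normal step.

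\textbf{Tangential-normal step.} When you compute $L(\mathcal{T}(u-\varphi))$ you differentiate the equation $F(\vartheta_u^\flat)=h$ once and commute the tangential derivative past $L$; the commutator errors are dominated by terms of the shape $\sum_k f_k|\lambda_k|$ (coming from torsion of the Chern connection, derivatives of the frame and of $g^{\bar qj}$, and derivatives of the $W$-tensor hitting second derivatives of $u$). This quantity is \emph{not} bounded by $C_K(\mathcal{F}+1)$: when $\lambda_1$ is large it can be of order $\lambda_1\mathcal{F}$, which no choice of $A_1,A_2,A_3$ in your barrier can absorb. The paper resolves this by putting an additional negative quadratic term $-\frac{1}{K}\sum_{\beta\ne 2m-1}\big((u-\underline{u})_{x_\beta}\big)^2$ into the barrier (their $Q_1$); expanding $L$ on this term produces a $-c_0\sum_{i\ne r}f_i\lambda_i^2/K$ contribution (via Lemma \ref{guanprop2.19}) which, combined with \eqref{filambdaileqfilambdai21}, swallows the $\sum_k f_k|\lambda_k|$ errors. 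Your barrier omits this term, so the inequality $L(\Phi)\le 0$ does not hold. The extra terms $td-\tfrac12 Nd^2$ in the paper's $v$, yielding the supersolution estimate $L(v)\le -\varepsilon(1+\mathcal{F})$, serve a related accounting purpose and are also missing.

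\textbf{Double-normal step.} Your compactness argument only works in what the paper calls Case 1, i.e.\ when $\lim_{\lambda_m\to+\infty} f(\lambda',\lambda_m)=+\infty$. In that case $f(\lambda(A^\flat))\to\infty$ as $A_{m\bar m}\to\infty$, contradicting the equation, and you are done. But for the operators of most interest here (e.g.\ $f=(\sigma_m/\sigma_\ell)^{1/(m-\ell)}$ of Theorem \ref{thmhessianquotient}) one is in Case 2: $f_\infty(\lambda')<\infty$, so $f$ can stay bounded while $A_{m\bar m}\to\infty$ and no contradiction with the equation arises. Moreover, your claim that for $A_{m\bar m}$ large $\lambda(A^\flat)-2\delta\mathbf{1}\in\lambda(\vartheta_{\underline{u}}^\flat)+\Gamma_m$ is unjustified: the bounded tangential eigenvalues $\tilde\lambda_j$ of $\vartheta_u'$ need not exceed $\lambda_j(\vartheta_{\underline{u}}^\flat)+2\delta$. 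On $\partial M$, $\vartheta_u'-\vartheta_{\underline{u}}'$ is a tensor proportional to $\nabla_\nu(u-\underline{u})\le 0$ (see \eqref{varthetauuu}); the coefficient tensor has no sign, so $\vartheta_u'$ may very well have smaller eigenvalues than $\vartheta_{\underline{u}}'$. The paper's treatment of Case 2 is precisely what fills this hole: it introduces $f_\infty$, the strict gap $c_\infty>0$ of \eqref{littlecinfty}, the quantity $P_\infty$, and a second barrier $\Phi+\Psi$ exploiting the concavity of $\tilde F$, in order to extract a quantitative lower bound $P_\infty>c_0$ that then feeds back into the Case-1-type compactness argument. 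This construction cannot be bypassed.

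Your tangent-tangent step is fine (identical in substance to the paper's $D_\alpha D_\beta(u-\underline{u})=0$).
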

Let us recall some preliminaries for the proof of Theorem \ref{thm2ndonbd} from \cite{ckns1985,guan2014,boucksombd} and references therein.
For any point $\mathbf{0}\in\partial M,$  we use the adapted data $B,r,\mathbf{z}$ in Section \ref{secboundary} with
$$z_i=x_{2i-1}+\sqrt{-1}x_{2i},\quad 1\leq i\leq m.$$
Let $D_1,\cdots,D_{2m}$ be the dual vector fields of
\begin{equation*}
\mathrm{d}x_{\alpha},\;\mathrm{d}r,\quad \alpha\not=2m-1
\end{equation*}
given by
\begin{equation*}
D_\alpha:=\frac{\partial}{\partial x_\alpha}-\frac{r_{x_\alpha} }{r_{ x_{2m-1}}}\frac{\partial}{\partial x_{2m-1}},\quad \alpha \not=2m-1,
\end{equation*}
and
$$
D_{2m-1}:=-\frac{1 }{r_{x_{2m-1}}}\frac{\partial}{\partial x_{2m-1}}.
$$
To obtain Theorem \ref{thm2ndonbd}, it is sufficient to prove
\begin{align}
 \label{ttestimate}
&|D_\alpha D_\beta u|\leq CK,\quad \alpha,\beta\not=2m-1,\\
\label{tnestimate}
&|D_\beta D_{2m-1}u|\leq CK,\quad \beta\not=2m-1,\\
\label{nnestimate}
&|D_{2m-1} D_{2m-1}u|\leq C_K.
\end{align}
Let us define the distance function $\rho(\mathbf{x})$ by
\begin{equation*}
\rho(\mathbf{x}):=\mathrm{dist}_g(\mathbf{x},\mathbf{0}),\quad \forall\;\mathbf{x}\in M.
\end{equation*}
We set $M_\delta:=\{\mathbf{x}\in M:\;\rho(\mathbf{x})\leq \delta\}.$ Since $\sqrt{-1}\partial\bar\partial\rho^2(\mathbf{0})=\omega(\mathbf{0}),$ we may assume that
\begin{equation}
\label{ddbarrho2}
\frac{1}{2}\omega\leq \sqrt{-1}\partial\bar\partial\rho^2\leq 2\omega,\quad\text{on}\;M_\delta,
\end{equation}
provided that $\delta>0$ is chosen small enough.

We consider another distance function $d$ given by
\begin{equation*}
d(\mathbf{x}):=\mathrm{dist}_g(\mathbf{x},\partial M),\quad \forall\,\mathbf{x}\in M.
\end{equation*}
Since $\partial M$ is smooth, it follows from \cite[Lemma 14.16]{gt1998} that there exists a constant $\delta>0$ such that the distance function $d$ is smooth on $\{x\in M:\;d(x)\leq \delta\} $ and hence on $M_\delta.$
\begin{lem}
There exist uniform positive numbers $t,\,\delta,\,\varepsilon$ small enough and $N$ with $N\gg1$ such that the function
$$
v:=(u-\underline{u})+td-\frac{1}{2}Nd^2
$$
satisfies
\begin{equation}
\label{lv}
\left\{
\begin{array}{rl}
 L(v)\leq-\varepsilon(1+\mathcal{F}),&\quad \text{in}\;M_\delta; \\
  v\geq0,&\quad \text{on}\;\partial M_\delta,
\end{array}
\right.
\end{equation}
where the operator $L$ is given by \eqref{defnl}.
\end{lem}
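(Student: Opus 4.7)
The first step would be to expand $L(v)=L(u-\underline{u})+tL(d)-\tfrac{1}{2}NL(d^2)$ and, using linearity of $W$, rewrite $L(d^2)=2F^{i\bar q}d_id_{\bar q}+2dL(d)$. Since $d$ is $C^2$ up to $\partial M$ in a full tubular neighbourhood of the boundary, all its first and second derivatives are controlled, so $|L(d)|\leq C\mathcal{F}$ uniformly on $M_\delta$. The piece $2F^{i\bar q}d_id_{\bar q}$ is nonnegative and will serve as the source of the "good" negative term. This reduces the whole problem to estimating $L(u-\underline{u})$ and trading a small multiple of $\mathcal{F}$ against a properly chosen combination of the remaining terms.

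For $L(u-\underline{u})$, concavity of $F$ together with $F(\vartheta_{\underline{u}}^\flat)\geq h=F(\vartheta_u^\flat)$ already forces $L(u-\underline{u})\leq 0$ unconditionally. To upgrade this, I would apply Proposition \ref{gaborprop5} pointwise to $\mu=\lambda(\vartheta_{\underline{u}}^\flat)$ and $\lambda=\lambda(\vartheta_u^\flat)$ at level $\sigma=h(\mathbf{p})$ (the admissibility of $\underline{u}$ together with \eqref{equzeroorder} giving the uniform boundedness hypothesis of the proposition after a slight shift). This yields the dichotomy at each point: \emph{either} (A) $\sum_j f_j(\mu_j-\lambda_j)>\kappa\mathcal{F}$, \emph{or} (B) $f_i>\kappa\mathcal{F}$ for every $i$. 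In an orthonormal frame diagonalising $\vartheta_u^\flat$, the Hardy--Littlewood--Pólya rearrangement (or equivalently von Neumann's trace inequality, after ordering the $f_i$'s decreasingly and the eigenvalues of $\vartheta_{\underline{u}}^\flat$ increasingly) converts case (A) into $F^{i\bar j}(\vartheta_{\underline{u}}-\vartheta_u)_{i\bar j}\geq \kappa\mathcal{F}$, i.e.\ $L(u-\underline{u})\leq-\kappa\mathcal{F}$. The regime $|\lambda(\vartheta_u^\flat)|\leq R$ left uncovered by the proposition can be absorbed into case (B) by continuity since then the $f_i$'s lie between two positive constants.

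With the dichotomy in hand, I would fix a small parameter $\eta>0$ and set $t=\eta/2$, choose $N$ large, and finally $\delta=\eta/N$, so that $N\delta=\eta=2t$. In case (A), $L(v)\leq -\kappa\mathcal{F}+(t+Nd)C\mathcal{F}\leq (-\kappa+\tfrac{3}{2}\eta C)\mathcal{F}$, and for $\eta$ small this gives $L(v)\leq -\tfrac{\kappa}{2}\mathcal{F}$. In case (B), one uses the good term: $F^{i\bar q}d_id_{\bar q}\geq \kappa\mathcal{F}\cdot\tfrac{1}{2}|\nabla d|_g^2=\tfrac{\kappa\mathcal{F}}{2}$, yielding $L(v)\leq -\tfrac{N\kappa}{2}\mathcal{F}+(t+N\delta)C\mathcal{F}$, which for $N$ large is at most $-\tfrac{N\kappa}{4}\mathcal{F}$. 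The uniform bound $\mathcal{F}\geq\tau>0$ from Lemma \ref{lem9}\eqref{gaborlem92} then converts both bounds into $L(v)\leq-\varepsilon(1+\mathcal{F})$ for a uniform $\varepsilon$.

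Finally, the boundary inequality comes almost for free: on $\partial M\cap M_\delta$ one has $u=\underline{u}=\varphi$ and $d=0$, so $v=0$; on $\{\rho=\delta\}\cap M$ one has $d\leq\rho=\delta$, and with $N\delta=2t$ the quadratic $td-\tfrac{N}{2}d^2=\tfrac{N}{2}d(\delta-d)$ is nonnegative, while $u-\underline{u}\geq 0$ by \eqref{equzeroorder}. Hence $v\geq 0$ throughout $M_\delta$, in particular on $\partial M_\delta$. The main technical obstacle in carrying out this plan is the passage from Szekelyhidi's eigenvalue inequality $\sum_j f_j(\mu_j-\lambda_j)>\kappa\mathcal{F}$ to the matrix trace inequality $F^{i\bar j}(\vartheta_{\underline{u}}-\vartheta_u)_{i\bar j}\geq \kappa\mathcal{F}$, since the frame that diagonalises $\vartheta_u^\flat$ will not diagonalise $\vartheta_{\underline{u}}^\flat$; the rearrangement/trace inequality has to be invoked with attention to the ordering, but once that is pinned down the remainder is book-keeping on the four parameters $(t,\delta,\varepsilon,N)$.
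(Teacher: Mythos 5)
Your proposal is correct and follows essentially the same route as the paper: the same expansion of $L(v)$ with $-NF^{i\bar q}d_id_{\bar q}$ as the good term, concavity giving $L(u-\underline{u})\leq 0$, the dichotomy of Proposition \ref{gaborprop5} combined with a trace/rearrangement inequality (the paper uses Richter's theorem, your von Neumann/HLP step is the same device), the separate treatment of the bounded-eigenvalue regime, and the constraint $N\delta\leq 2t$ together with $u\geq\underline{u}$ for the boundary inequality. The only slips are bibliographic rather than mathematical: the comparison $u\geq\underline{u}$ is \eqref{underlineuu}, not \eqref{equzeroorder}, and the uniformity of $\epsilon_0,R$ in Proposition \ref{gaborprop5} comes from compactness of $M$ and continuity of $\lambda(\vartheta_{\underline{u}}^\flat)$ and $h$, not from the gradient bound.
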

\begin{proof}This is a Hermitian version of \cite[Lemma 4.1]{guan2014}, and we use the ideas modified from there based on \cite{gaborjdg} (see Section \ref{secsubsolution}).
Thank to \eqref{underlineuu}, we require $\delta\leq 2t/N$ in order to obtain $v\geq 0$ on $M_\delta$ after $t$ and $N$ being bounded.

A direct calculation yields that
\begin{align}
\label{lvesti}
L(v)=& F^{i\bar q}\left((u-\underline{u})_{i\bar q}+W_{i\bar q}(\mathrm{d}(u-\underline{u}))\right)\\
&+(t-Nd)F^{i\bar q} d_{i\bar q}-NF^{i\bar q}d_i d_{\bar q}
 +(t-Nd)F^{i\bar q} W_{i\bar q}(\mathrm{d} d) \nonumber\\
\leq &C_1(t-Nd)\mathcal{F}+F^{i\bar q} \left( (u-\underline{u})_{i\bar q}+W_{i\bar q}(\mathrm{d}(u-\underline{u}))\right)-NF^{i\bar q} d_i d_{\bar q}.\nonumber
\end{align}
Fix $\varepsilon>0$ sufficiently small. Since $\underline{u}$  is an admissible subsolution (and hence $\mathcal{C}$-subsolution), we can find $\epsilon_0>0$ small and $R>0$ large such that
\begin{equation*}
\left(\lambda(\vartheta_{\underline{u}}^\flat)-2\epsilon_0\mathbf{1}+\Gamma_m\right)\cap \partial\Gamma^{h(\mathbf{x})}\subset B_{R}(\mathbf{0}),\quad \forall\,\mathbf{x}\in M_\delta.
\end{equation*}
Let $\lambda_1(\vartheta_{u}^\flat)\geq \cdots\geq \lambda_m(\vartheta_{u}^\flat)$ and $\lambda_1(\vartheta_{\underline{u}}^\flat)\leq \cdots\leq \lambda_m(\vartheta_{\underline{u}}^\flat)$. Then the concavity of $f$ yields that $f_1\leq \cdots\leq f_m$ (see \cite{spruck} or \cite[Lemma 2]{eh89}). Then we can deduce
\begin{equation}
\label{fijabag}
F^{ij}(\vartheta_{u}^\flat)\left((\vartheta_{\underline{u}}^\flat)_i{}^j-(\vartheta_{u}^\flat)_i{}^j\right)\geq f_i(\lambda(\vartheta_{u}^\flat))
\left(\lambda_i(\vartheta_{u}^\flat)-\lambda_i(\vartheta_{\underline{u}}^\flat)\right).
\end{equation}
from these two inequalities and the theorem in \cite{richter} which states that for any $n\times n$ Hermitian matrices $A$ and $B$ with eigenvalues $\gamma_1\geq\cdots\geq\gamma_n$ and $\delta_1\geq\cdots\geq\delta_n$ respectively, there holds  the sharp estimate
\begin{equation*}
\sum\limits_{i}\gamma_i\delta_{n+1-i}\leq \tr(AB)\leq \sum\limits_{i}\gamma_i\delta_{i}.
\end{equation*}
One infers from \eqref{fijabag}  and the concavity of $f$  that
\begin{align}
\label{fabneg}
&F^{i\bar q} (\vartheta_{u}^\flat)\left( (u-\underline{u})_{i\bar q}+W_{i\bar q}(\mathrm{d}(u-\underline{u}))\right)\\
=&F^{ij}(\vartheta_{u}^\flat)\left((\vartheta_{u}^\flat)_i{}^j-(\vartheta_{\underline{u}}^\flat)_i{}^j\right)
\leq  f_i(\lambda(\vartheta_{u}^\flat))\left(\lambda_i(\vartheta_{u}^\flat)-\lambda_i(\vartheta_{\underline{u}}^\flat)\right)
\leq f(\lambda(\vartheta_{u}^\flat))- f(\lambda(\vartheta_{\underline{u}}^\flat))\leq 0,\nonumber
\end{align}
since $\underline{u}$ is an admissible subsolution to \eqref{cma}.

The following argument splits into two cases.

\textbf{Case 1:} $|\lambda(A)|\leq R.$ One can deduce that
\begin{equation*}
\left\{\lambda\in\Gamma:\;f(\lambda)\geq \inf_{M}h>\sup_{\partial \Gamma}f\right\}\cap \overline{B_R(\mathbf{0})}\subset\Gamma
\end{equation*}
is a compact set, and hence there exists a constant $C_2$ depending on the background data such that
\begin{equation*}
C_2\geq f_i\geq C_2^{-1}>0,\quad1\leq j\leq m,\quad\text{on}\;M_\delta.
\end{equation*}
This yields that
\begin{equation}
\label{fiqdidq}
  F^{i\bar q}d_i d_{\bar q} \geq 1/(2C_2),
\end{equation}
since $(d_i d_{\bar q})$ is a non-negative Hermitian matrix with respect to $g$ and $2g^{\bar q j}d_i d_{\bar q}=1.$ Then by \eqref{lvesti}, \eqref{fabneg} and \eqref{fiqdidq}, we can fix $N$ sufficiently large so that \eqref{lv} holds for $t,\varepsilon\in (0,1/2]$ provided that the positive number $\delta$ is small enough.

\textbf{Case 2:} $|\lambda|>R.$ Thanks to Assertion \eqref{gaborlem92} of Lemma \ref{lem9}, we deduce that
\begin{equation}
\label{ftau}
\mathcal{F}\geq \tau>0.
\end{equation}

Thanks to \eqref{fijabag} and Proposition \ref{gaborprop5},  one can deduce that either
\begin{equation}
\label{fijaba}
F^{ij}(\vartheta_{u}^\flat)\left((\vartheta_{\underline{u}}^\flat)_i{}^j-(\vartheta_{u}^\flat)_i{}^j\right)\geq \kappa \mathcal{F}
\end{equation}
or $f_i\geq \kappa \mathcal{F}$ for all $1\leq i\leq m.$

If \eqref{fijaba} occurs, then we have
 \begin{equation}
 \label{fijaab}
 F^{i\bar q} \left((u-\underline{u})_{i\bar q}+W_{i\bar q}(\mathrm{d}(u-\underline{u}))\right)=F^{ij}(\vartheta_{u}^\flat)
 \left((\vartheta_{u}^\flat)_i{}^j-(\vartheta_{\underline{u}}^\flat)_i{}^j\right)
 \leq -\kappa \mathcal{F}.
 \end{equation}
 Then \eqref{lv} follows from \eqref{lvesti}, \eqref{ftau} and \eqref{fijaab} provided $t$ and $\delta$ sufficiently small.

If $f_i\geq \kappa \mathcal{F},\,1\leq i\leq m$ occurs, then we can deduce
\begin{equation}
\label{nfg}
-NF^{i\bar q}  d_i d_{\bar q} \leq -c_2N\mathcal{F}
\end{equation}
with $c_2>0$ sufficiently small since $2g^{\bar q j}d_i d_{\bar q}=1$ and $(d_i d_{\bar q})$ is a non-negative Hermitian matrix with respect to $g.$

Therefore, \eqref{lv} follows from \eqref{fabneg}, \eqref{ftau} and \eqref{nfg} provided the positive constants $t,\,\delta$ and $\varepsilon$ sufficiently small.
\end{proof}
We will prove Theorem \ref{thm2ndonbd} by the ideas modified from the ones of \cite{ckns1985,trudinger1995,guan2014} in the local case and the Riemannian setup.
We use $D_\beta,\,1\leq \beta \leq 2m$ from \cite{ckns1985} where the complex Monge-Amp\`ere equation in $\Omega\subset \mathbb{C}^m$ was studied (see also \cite{guan1998,boucksombd} and references therein). For this aim, we need write Lemma \ref{guanprop2.19}  in a slightly different way, i.e., there exists a constant $c_0$ depending on $(M,J,g),\,\partial M$ and the adapted data $(B,r,\mathbf{z})$ such that
\begin{equation}
\label{p1m1ij1}
 \sum_{p=1}^{m-1}\sum_{i,j=1}^mF^{i\bar j}(\vartheta_u)_{i\bar p}(\vartheta_u)_{p\bar j}
\geq c_0 \sum_{r\not =r_0}f_r\lambda_r^2,
\end{equation}
for some $r_0$ with $1\leq r_0\leq m.$
Indeed, replacing $A_{i\bar j} $ by $(\vartheta_u)_{i\bar j}$, the calculation at the end of Section \ref{secsubsolution} yields that
\begin{equation*}
 \sum_{p=1}^{m-1}\sum_{i,j=1}^mF^{i\bar j}(\vartheta_u)_{i\bar p}(\vartheta_u)_{p\bar j}
=\sum_{p=1}^{m-1}\sum_{r=1}^mf_r\lambda_r^2\zeta_p{}^r\overline{\zeta_p{}^r}.
\end{equation*}
Since $(\zeta_i{}^j)$ is invertible depending only on the background data and the adapted data, there exists at most one index, say $r_0,$ such that $\sum_{p=1}^{m-1}\zeta_p{}^{r_0}\overline{\zeta_p{}^{r_0}}=0,$ as desired.
Although $c_0$ depending on the adapted data $(B,r,\mathbf{z}),$ it is still `uniform' for our estimates because we have fixed a family of finite adapted data $(B,r,\mathbf{z})$'s throughout this paper, and for the same reason, we can estimate the quantity like
$$
\sum_{p=1}^{m-1}\sum_{i,j=1}^mF^{i\bar j}(\vartheta_u)_{i\bar p}(\vartheta_u)_{p\bar j}
$$
which is not globally defined on the manifold.
\begin{proof}
[Proof of Theorem \ref{thm2ndonbd}]
A direct calculation yields that
all the $D_\beta$'s commute, and it follows Lemma \ref{lembou7.12} that $D_\beta,\beta\not=2m-1$ are tangent to $\partial M.$
Hence the tangent-tangent estimate \eqref{ttestimate} follows from
\begin{equation*}
 D_\alpha D_\beta(u-\underline{u})=0,\quad \alpha,\,\beta\not=2m-1.
\end{equation*}
Let us prove the normal-tangent estimates \eqref{tnestimate}.
For this aim, we consider the function
\begin{equation*}
Q:=Q_1\pm D_\alpha(u-\underline{u})
\end{equation*}
on $M_\delta$ with $$Q_1:=A_1Kv+A_2K\rho^2-\frac{1}{K}\sum_{\beta\not=2m-1}\left((u-\underline{u})_{x_{\beta}}\right)^2$$ and $\alpha\not=2m-1$ fixed, where $A_1$ and $A_2$ will be determined later.

For convenience, we write $a:=-r_{x_\beta}/r_{x_{2m-1}}$ and a direct calculation gives
\begin{align}
\label{dbetafbarq}
 \left(D_\beta(u-\underline{u})\right)_{\bar q}
=&D_{\beta}( (u-\underline{u})_{\bar q}) +  a_{\bar q}(u-\underline{u})_{x_{2m-1}} ,\\
 \label{dbetafbarqi}
 \left(D_\beta(u-\underline{u})\right)_{i\bar q}
=&D_\beta((u-\underline{u})_{i\bar q})
+a_i((u-\underline{u})_{\bar q})_{x_{2m-1}}\\
&+a_{i\bar q}(u-\underline{u})_{x_{2m-1}}+a_{\bar q}((u-\underline{u})_{i})_{x_{2m-1}}.\nonumber
\end{align}
Since
\begin{equation*}
\frac{\partial}{\partial x_{2m-1}}=2\frac{\partial}{\partial z_m}+\sqrt{-1}\frac{\partial}{\partial x_{2m}}
=2\frac{\partial}{\partial \bar z_m}-\sqrt{-1}\frac{\partial}{\partial x_{2m}},
\end{equation*}
we get
\begin{align}
\label{aiuunderlineu}
&a_i((u-\underline{u})_{\bar q})_{x_{2m-1}}+a_{\bar q}((u-\underline{u})_{i})_{x_{2m-1}}\\
=&2a_i(u-\underline{u})_{m\bar q}+2a_{\bar q}(u-\underline{u})_{i\bar m}\nonumber\\
&+\sqrt{-1}(a_i((u-\underline{u})_{\bar q })_{x_{2m}}-a_{\bar q}((u-\underline{u})_{i})_{x_{2m}}).\nonumber
\end{align}
From \eqref{cma}, we can obtain
\begin{equation}
\label{dbetah}
D_\beta h=F^{ij}\left(D_\beta g^{\bar qj}\right)(\vartheta_{u})_{i\bar q}+
F^{i\bar q}\left(D_\beta\chi_{i\bar q}+D_\beta u_{i\bar q}
+ D_\beta\left(W_{i\bar q}(\mathrm{d}u)\right)\right)
\end{equation}
Thanks to \eqref{dbetafbarq}, \eqref{dbetafbarqi}, \eqref{aiuunderlineu} and \eqref{dbetah}, it follows that
\begin{align}
\label{ldbetauunderlineu}
L\left(D_\beta(u-\underline{u})\right)
=&F^{i\bar q}\left(\left(D_\beta(u-\underline{u})\right)_{i\bar q}+W_{i\bar q}\left(\mathrm{d} \left(D_\beta(u-\underline{u})\right)\right)\right)\\
=&F^{i\bar q}D_\beta\left((u-\underline{u})_{i\bar q}+W_{i\bar q}(\mathrm{d} (u-\underline{u}))\right)
-F^{i\bar q} \left(D_\beta W_{i\bar q}^p\right)(u-\underline{u})_p\nonumber\\
&-F^{i\bar q}\overline{\left(D_\beta W_{q\bar i}^p\right)}(u-\underline{u})_{\bar p}
 +L(a) (u-\underline{u})_{x_{2m-1}}\nonumber\\
&+2F^{i\bar q}(a_i(u-\underline{u})_{m\bar q}+a_{\bar q}(u-\underline{u})_{i\bar m})\nonumber\\
&+\sqrt{-1}F^{i\bar q}\left(a_i((u-\underline{u})_{\bar q })_{x_{2m}}-a_{\bar q}((u-\underline{u})_{i})_{x_{2m}}\right)\nonumber\\
\leq &C\left(\left(1+|\partial u|_g\right)\left(1+ \mathcal{F}\right)+\sum_pf_p|\lambda_p|\right)\nonumber\\
&+ \left|F^{i\bar q}\left(a_i((u-\underline{u})_{\bar q })_{x_{2m}}-a_{\bar q}((u-\underline{u})_{i})_{x_{2m}}\right)\right|,\nonumber
\end{align}
where we denote by $\lambda_1,\cdots,\lambda_m$ the eigenvalues of $\vartheta_{u}^\flat.$

On the other hand, a direct calculation implies that
\begin{align}
\label{luunerlineu2}
L\left((u_{x_{\beta}}-\underline{u}_{x_{\beta}})^2\right)
=&2F^{i\bar q}((u-\underline{u})_{i})_{x_{\beta}}((u-\underline{u})_{\bar q})_{x_{\beta}}\\
&+2(u_{x_{\beta}}-\underline{u}_{x_{\beta}})F^{i\bar q}
\left((u-\underline{u})_{i\bar q x_\beta}+\left(W_{i\bar q}(\mathrm{d} (u-\underline{u}))\right)_{x_{\beta}}\right)\nonumber\\
&-2(u_{x_{\beta}}-\underline{u}_{x_{\beta}})F^{i\bar q} (W_{i\bar q}^p)_{x_{\beta}}(u-\underline{u})_p \nonumber\\
&-2(u_{x_{\beta}}-\underline{u}_{x_{\beta}})F^{i\bar q}  \left(\overline{W_{q\bar i}^{p}}\right)_{x_{\beta}}(u-\underline{u})_{\bar p} \nonumber\\
=&2F^{i\bar q} ((u-\underline{u})_{i})_{x_{\beta}}((u-\underline{u})_{\bar q})_{x_{\beta}}\nonumber\\
&+2(u_{x_{\beta}}-\underline{u}_{x_{\beta}})
\left(h_{x_{\beta}}-F^{i\bar q}\left(\chi_{i\bar q}\right)_{x_{\beta}}+O\left(\mathcal{F}+\sum_if_i|\lambda_i|\right)
\right)\nonumber\\
&-2(u_{x_{\beta}}-\underline{u}_{x_{\beta}})F^{i\bar q}(W_{i\bar q}^p)_{x_{\beta}}(u-\underline{u})_p \nonumber\\
&-2(u_{x_{\beta}}-\underline{u}_{x_{\beta}})F^{i\bar q} \left(\overline{W_{q\bar i}^{p}}\right)_{x_{\beta}}(u-\underline{u})_{\bar p}\nonumber\\
\geq&2F^{i\bar q} ((u-\underline{u})_{i})_{x_{\beta}}((u-\underline{u})_{\bar q})_{x_{\beta}}
-CK\mathcal{F}-CK^{1/2} \sum_if_i|\lambda_i| ,\nonumber
\end{align}
where for the second equality we use the equality
$$
h_{\beta}=F^{ij}(g^{\bar q j})_{x_\beta}(\vartheta_u)_{i\bar q}
+F^{i\bar q}\left(
(\chi_{i\bar q})_{x_{\beta}}+u_{i\bar q x_\beta}+\left(W_{i\bar q}(\mathrm{d}u)\right)_{x_\beta}
\right)
$$
by applying $\partial/\partial x_{\beta}$ to both sides of \eqref{cma}.

The Cauchy-Schwarz inequality yields that
\begin{align}
\label{cs1}
&-\frac{2}{K}F^{i\bar q}((u-\underline{u})_{i})_{x_{2m}}((u-\underline{u})_{\bar q})_{x_{2m}}\\
&+|F^{i\bar q}\left(a_i((u-\underline{u})_{\bar q })_{x_{2m}}-a_{\bar q}((u-\underline{u})_{i})_{x_{2m}}\right)|=O(K\mathcal{F}),\nonumber\\
\label{cs2}
&2\sum_{\beta=1}^{2m-2}F^{i\bar q}((u-\underline{u})_{i})_{x_{\beta}}((u-\underline{u})_{\bar q})_{x_{\beta}}\\
=&4\sum_{k=1}^{m-1}F^{i\bar q}
\left(
(u-\underline{u})_{i  k} (u-\underline{u})_{\bar q \bar k}
+(u-\underline{u})_{i  \bar k} (u-\underline{u})_{k\bar q}
\right)\nonumber\\
\geq &4\sum_{k=1}^{m-1}F^{i\bar q}
(u-\underline{u})_{i \bar k} (u-\underline{u})_{k\bar q}\nonumber\\
\geq&\sum_{k=1}^{m-1}F^{i\bar q} \left(\vartheta_u\right)_{i\bar k}\left(\vartheta_u\right)_{k\bar q}
-CK\mathcal{F},\nonumber\\
\geq&c_0\sum_{i\not=r}f_i\lambda_i^2-CK\mathcal{F},\nonumber
\end{align}
where we use \eqref{defnvartheta}, \eqref{p1m1ij1}, the Cauchy-Schwarz inequality, and $r$ is the index chosen as in \eqref{p1m1ij1}.

It follows from \eqref{filambdai} and \cite[Corollary 2.21]{guan2014} that
\begin{equation}
\label{filambdaileqfilambdai21}
 0\leq\sum_{k=1}^mf_k|\lambda_k|\leq \epsilon\sum_{i\not=r}f_i\lambda_i^2+\frac{C}{\epsilon}\mathcal{F}+P(r),\quad\forall\; r\in\{1,\cdots,m\}
\end{equation}
where $P(r)$ is uniformly bounded, for it given by
\begin{equation*}
 P(r):=\left\{
\begin{array}{rl}
  f(\lambda)-f(\mathbf{1}), &\quad \text{if}\;\lambda_r\geq0; \\
  0, & \quad \text{if}\;\lambda_r<0.
\end{array}
\right.
\end{equation*}
Thanks to \eqref{ddbarrho2}, \eqref{lv}, \eqref{ldbetauunderlineu}, \eqref{luunerlineu2}, \eqref{cs1}, \eqref{cs2} and \eqref{filambdaileqfilambdai21},  we can deduce that
\begin{equation*}
 \left\{
\begin{array}{rl}
  L(Q)\leq 0,&\quad \text{on}\quad M_\delta, \\
  Q\geq 0,&\quad  \text{on}\quad\partial M_\delta,
\end{array}
\right.
\end{equation*}
provided  the positive constants $A_1\gg A_2\gg 1$ independent of $K$ and $\epsilon\in(0,1)$  chosen carefully.
Then the minimum principle yields that $Q\geq 0$ on $M_\delta$, and hence on $M_\delta \cap \partial M$ there holds
\begin{align*}
 \left|D_\alpha D_{2m-1}u\right|
\leq&  \left|D_\alpha D_{2m-1}\underline{u}\right|
+ \left|D_{2m-1}Q_1\right| \\
=&\left|D_\alpha D_{2m-1}\underline{u}\right|
+ A_1K\left|D_{2m-1}v\right|
\leq CK,
\end{align*}
where for the last inequality we use \eqref{equzeroorder} and hence \eqref{tnestimate} follows.

For the normal-normal estimate \eqref{nnestimate}, we will use the method in \cite{trudinger1995,guan2014} in the local case and the Riemannian setup. For this aim, we need use the local unitary frames. We choose smooth orthonormal local frames  $X_1,\cdots,X_{2m}$ near $\mathbf{p}\in\partial M$  with respect to $g$ such that
\begin{equation*}
JX_{2i-1}=X_{2i},\quad 1\leq i\leq m,
\end{equation*}
and that $X_{2m-1}$ is the unit inner normal vector around $\mathbf{0}.$
We define a unitary basis of $(1,0)$ type frames by
\begin{equation*}
e_j:=\frac{1}{\sqrt{2}}\left(X_{2j-1}-\sqrt{-1}X_{2j}\right),\quad 1\leq j\leq m.
\end{equation*}
The linear operator $L$ given by \eqref{defnl} will be rewritten as
\begin{align*}
 L(v)=&F^{i\bar j}\left(e_i\bar e_{ j}(v)-[e_i,\bar e_{j}]^{(0,1)}(v)+W_{i\bar j}(\mathrm{d}v)\right)\\
=&F^{i\bar j}\left(\nabla_i\nabla_{\bar j}v+W_{i\bar j}^p(\nabla_p v)+\overline{W_{j\bar i}^p}(\nabla_{\bar p} v)\right), \quad \forall\;v\in C^2(M,\mathbb{R}).
\end{align*}
Using these unitary frames, we have
\begin{equation*}
 \lambda(\vartheta_u^\flat)=\lambda\left(\left(\vartheta_u)_{i\bar j}\right)_{1\leq i,j\leq m}\right),
\end{equation*}
and hence we can use the method in \cite{trudinger1995}.

Let $A=(a_{i\bar j})$ be $(m-1)\times (m-1)$ or $m\times m$ Hermitian matrix. Then we denote by $\lambda_1(A)\leq \cdots\leq \lambda_m(A)$ the eigenvalues of $A$ if $A$ is $m\times m$ Hermitian matrix, and by $\lambda'_1 (A)\leq\cdots\leq \lambda_{m-1}'(A)$ the eigenvalues of $A$ if it is an $(m-1)\times (m-1)$ Hermitian matrix. We also set $A'=(a_{i\bar j})_{1\leq i,j\leq m-1}$  if $A$ is an $m\times m$ Hermitian matrix.  Cauchy's interlace inequality (see for example \cite{Hwang})
yields that
\begin{equation}
\label{cauchyinterlace}
 \lambda_j(A)\leq \lambda'_{j}(A')\leq \lambda_{j+1}(A),\quad 1\leq j\leq m-1.
\end{equation}
Note that $a_{m\bar m}\in\mathbb{R}.$ It follows from \cite[Lemma 1.2]{cnsacta} that
\begin{align}
\label{cnslem121}
 \lambda_j(A)=&\lambda'_j(A')+o(1),\quad 1\leq j\leq m-1,\\
\label{cnslem122}
a_{m\bar m}\leq \lambda_m(A)=&a_{m\bar m}\left(1+O\left(\frac{1}{a_{m\bar m}}\right)\right),\quad \text{as}\quad|a_{m\bar m}|\to +\infty.
\end{align}
Thanks to \eqref{cone}, \eqref{ttestimate} and \eqref{tnestimate}, it is sufficient to get
\begin{equation}
\label{nnestimate0}
\nabla_m\nabla_{\bar m}u\leq C_K
\end{equation}
for the normal-normal estimate \eqref{nnestimate}.
The argument splits into two cases.

\textbf{Case 1:} there holds
\begin{equation*}
\lim_{\lambda_m\to+\infty}f(\lambda_1,\cdots,\lambda_{m-1},\lambda_m)=+\infty, \quad \lambda'=(\lambda_1,\cdots,\lambda_{m-1})\in\Gamma_\infty.
\end{equation*}
In this case, from \eqref{ttestimate} and \eqref{tnestimate}, it follows that $\lambda'\left(\vartheta_u'\right)$ lies in a compact set $L\subset\Gamma_\infty.$ Hence there exist  uniform positive constants $c_0$ and $R_0$ depending only on the range of $\lambda'\left(\vartheta_u' \right)$ such that for any $R\geq R_0$ one infers
\begin{equation*}
 f\left(\lambda'\left(\vartheta_u'\right),R\right)>\sup_{\mathbf{x}\in M}h(\mathbf{x})+c_0,
\end{equation*}
and hence we get
\begin{equation}
\label{flambda'lambdam}
 f\left(\lambda' ,R \right)>\sup_{\mathbf{x}\in M}h(\mathbf{x})+c_0/2,
\quad \forall\;\lambda'\in U_L,\quad R\geq R_0,
\end{equation}
since $f_m>0,$ where $U_L$ is the neighborhood of $L.$

From \eqref{cauchyinterlace}, \eqref{cnslem121} and \eqref{cnslem122}, one can deduce that there exists a $R_1\geq R_0$ such that if $\left(\vartheta_u\right)_{m\bar m}\geq R_1$
\begin{equation*}
 \lambda_m(\vartheta_u^\flat)\geq \left(\vartheta_u\right)_{m\bar m}\geq R_1\geq R_0,\quad \lambda_j(\vartheta_u^\flat)\in U_L, \quad 1\leq j\leq m-1.
\end{equation*}
This, together with \eqref{flambda'lambdam}, yields that
\begin{equation*}
 F(\vartheta_u^\flat)=f(\lambda(\vartheta_u^\flat))>\sup_{\mathbf{x}\in M}h(\mathbf{x})+c_0/2,
\end{equation*}
a contradiction to \eqref{cma}, and hence \eqref{nnestimate0} as well as \eqref{nnestimate} follows.

\textbf{Case 2:} there holds
\begin{equation*}
f_\infty(\lambda'):=\lim_{\lambda_m\to+\infty}f(\lambda_1,\cdots,\lambda_{m-1},\lambda_m)<+\infty, \quad \lambda'=(\lambda_1,\cdots,\lambda_{m-1})\in\Gamma_\infty.
\end{equation*}
For any $(m-1)\times(m-1)$ Hermitian matrix $E,$ if $\lambda'(E)\in \Gamma_\infty,$ we define
\begin{equation*}
 \tilde F(E)=f_\infty(\lambda'(E)).
\end{equation*}
Note that $\tilde F$ is also a concave function, and that
\begin{equation}
\label{littlecinfty}
 c_\infty:=\inf_{\partial M}\left(f_\infty\left(\lambda'\left(\vartheta_{\underline{u}}'\right)\right)-F(\vartheta_{\underline{u}})\right)>0.
\end{equation}
The concavity of $f$ yields that for any point $\mathbf{x}\in\partial M$, there exists a $(m-1)\times(m-1)$ Hermitian metric $\left(\tilde F^{i\bar j}(\mathbf{x})\right)_{1\leq i,j\leq m-1}$ such that
\begin{equation}
\label{concavityf}
 \sum_{i,j=1}^{m-1}\tilde F^{i\bar j}(\mathbf{x})\left(E_{i\bar j}-\left(\vartheta_u\right)_{i\bar j}(\mathbf{x})\right)
\geq \tilde F(E)-\tilde F(\vartheta_u'(\mathbf{x})),\quad \forall\;E\;\text{with}\;\lambda'(E)\in \Gamma_\infty.
\end{equation}
We assume that
\begin{equation*}
P_\infty:=\min_{\mathbf{x}\in\partial M}\left(\tilde F(\vartheta_u'(\mathbf{x}))-h(\mathbf{x})\right)
=\tilde F(\vartheta_u'(\mathbf{0}))-h(\mathbf{0}).
\end{equation*}
The argument in Case 1 yields that it is sufficient to prove
\begin{equation}
\label{mubiao}
 P_\infty>c_0>0
\end{equation}
for some uniform  constant $c_0.$

Taking $E=\vartheta_u'(\mathbf{x})$ in \eqref{concavityf} gives us that
\begin{align}
\label{ijtildefijo}
 &\sum_{i,j=1}^{m-1}\tilde F^{i\bar j}(\mathbf{0})\left(\vartheta_u(\mathbf{x})\right)_{i\bar j}
-h(\mathbf{x})-\sum_{i,j=1}^{m-1}\tilde F^{i\bar j}(\mathbf{0})\left(\vartheta_u(\mathbf{0})\right)_{i\bar j}
+h(\mathbf{0})\\
\geq& \tilde F(\vartheta_u'(\mathbf{x}))-h(\mathbf{x})-P_\infty\geq 0,\quad \forall\;\mathbf{x}\in\partial M.\nonumber
\end{align}
Note that on $\partial M$ there holds
\begin{align}
\label{varthetauuu}
 (\vartheta_u)_{i\bar j}-(\vartheta_{\underline{u}})_{i\bar j}
=& \nabla_i\nabla_{\bar j}(u-\underline{u})+W_{i\bar j}(\nabla(u-\underline{u}))\\
=&-g\left(X_{2m-1},\nabla_{i}\bar e_j\right)\nabla_{X_{2m-1}}(u-\underline{u})
+W_{i\bar j}(\nabla(u-\underline{u}))\nonumber\\
=&-\nabla_{X_{2m-1}}(u-\underline{u})\left(g\left(X_{2m-1},\nabla_{i}\bar e_j\right)
-\frac{1}{\sqrt{2}} \left(W_{i\bar j}^m+\overline{W_{j\bar i}^{m}}\right)\right) \nonumber
\end{align}
with $1\leq i,j\leq m-1.$ Here we remind that $\nabla_i\bar e_j=[e_i,\bar e_j]^{(0,1)}.$
Hence at $\mathbf{0},$ we can deduce from \eqref{littlecinfty}, \eqref{concavityf} and \eqref{varthetauuu} that
\begin{align}
\label{cinfty-pinfty}
 &\nabla_{X_{2m-1}}(u-\underline{u})\sum_{1\leq i,j\leq m-1}\tilde F^{i\bar j}\left(g\left(X_{2m-1},\nabla_{i}\bar e_j\right)
-\frac{1}{\sqrt{2}} \left(W_{i\bar j}^m+\overline{W_{j\bar i}^{ m}}\right)\right)\\
=&\sum_{1\leq i,j\leq m-1}\tilde F^{i\bar j} (\vartheta_{\underline{u}})_{i\bar j}
-\sum_{1\leq i,j\leq m-1}\tilde F^{i\bar j} (\vartheta_u)_{i\bar j}\nonumber\\
\geq&\tilde F(\vartheta_{\underline{u}}')-\tilde F(\vartheta_u')
= \tilde F(\vartheta_{\underline{u}}')-h(\mathbf{0})-P_\infty\geq c_\infty-P_\infty.\nonumber
\end{align}
We assume that
\begin{equation*}
 \nabla_{X_{2m-1}}(u-\underline{u})\sum_{1\leq i,j\leq m-1}\tilde F^{i\bar j}\left(g\left(X_{2m-1},\nabla_{i}\bar e_j\right)
-\frac{1}{\sqrt{2}} \left(W_{i\bar j}^m+\overline{W_{j\bar i}^{m}}\right)\right)>c_\infty/2,\quad\text{at}\;\mathbf{0};
\end{equation*}
otherwise  the equality \eqref{mubiao} follows from \eqref{cinfty-pinfty} and the conclusion follows. We set
\begin{equation*}
 \eta(\mathbf{x})=\sum_{1\leq i,j\leq m-1}\tilde F^{i\bar j}(\mathbf{0})\left(g\left(X_{2m-1},\nabla_{i}\bar e_j\right)
-\frac{1}{\sqrt{2}} \left(W_{i\bar j}^m+\overline{W_{j\bar i}^{ m}}\right)\right)(\mathbf{x}),\quad \forall\;\mathbf{x}\in\partial M.
\end{equation*}
It follows from \eqref{equzeroorder} that
\begin{equation*}
 \eta(\mathbf{0})\geq 2\epsilon_\infty c_\infty,
\end{equation*}
where $\epsilon_\infty>0$ is a uniform constant. We assume that $\eta\geq \epsilon_\infty c_\infty$ on $M_\delta$ with $\delta>0$ sufficiently small.

We consider the   quantity
\begin{align*}
 \Phi(\mathbf{x}):=&-\left(\nabla_{X_{2m-1}}(u-\underline{u})\right)(\mathbf{x})+
\frac{1}{\eta(\mathbf{x})}\sum_{i,j=1}^{m-1}\tilde F^{i\bar j}(\mathbf{0})\left(\left(\vartheta_{\underline{u}}(\mathbf{x})\right)_{i\bar j}-\left(\vartheta_u\right)_{i\bar j}(\mathbf{0})\right)
-\frac{h(\mathbf{x})-h(\mathbf{0})}{\eta(\mathbf{x})}\\
=:&-\left(\nabla_{X_{2m-1}}(u-\underline{u})\right)(\mathbf{x})+\Phi_1(\mathbf{x}),\quad \forall\;\mathbf{x}\in M_\delta.
\end{align*}
We deduce from \eqref{ijtildefijo} and \eqref{varthetauuu} that
\begin{equation}
\label{phigeq0}
 \Phi(\mathbf{0})=0,\quad \Phi(\mathbf{x})\geq 0\quad \forall\;\mathbf{x}\in M_\delta\cap (\partial M).
\end{equation}
It follows from \eqref{ricciid1} and \eqref{ricciid2} that
\begin{equation}
\label{fibarjnablainablabarj}
 F^{i\bar j}\nabla_i\nabla_{\bar j}\nabla_{X_{\beta}}u=F^{i\bar j}\nabla_{X_{\beta}}\nabla_i\nabla_{\bar j}u
+O\left( |\partial u|_g \mathcal{F}+\sum_{k}f_k|\lambda_k|\right).
\end{equation}
From \eqref{cma}, we get
\begin{align}
\label{xbetah}
 X_\beta h
=&F^{i\bar j}\left(\nabla_{X_{\beta}}\chi_{i\bar j}
+\nabla_{X_{\beta}}\nabla_i\nabla_{\bar j}u\right)\\
&+F^{i\bar j}\left( \left(\left(\nabla_{X_{\beta}} W_{i\bar j}^p\right)\nabla_p u\right)+ \left(\overline{\left(\nabla_{X_{\beta}} W_{j\bar i }^{p}\right)}\nabla_{\bar p}u\right)\right)\nonumber\\
&+F^{i\bar j}  \left(W_{i\bar j}^p\left(\nabla_p\nabla_{X_{\beta}}u-T(X_\beta,e_p)u\right) \right) \nonumber\\
&+ F^{i\bar j} \left(\overline{W_{j\bar i}^{  p}}\left(\nabla_{\bar p}\nabla_{X_{\beta}}u-T(X_\beta,\bar e_p)u\right) \right)\nonumber
\end{align}
From \eqref{fibarjnablainablabarj} and \eqref{xbetah}, it follows that
\begin{align}
\label{betauuu}
 L(\nabla_{X_{\beta}}(u-\underline{u}))
=&F^{i\bar j}\left(\nabla_i\nabla_{\bar j}\nabla_{X_\beta}(u-\underline{u})+W_{i\bar j}(\nabla\nabla_{X_{\beta}}(u-\underline{u}))\right)\\
=&O\left(1+\left(1+|\partial u|_g\right)\mathcal{F}+\sum_kf_k|\lambda_k|\right).\nonumber
\end{align}
Hence we have
\begin{align}
\label{2betauuu}
 &\sum_{\beta=1}^{2m-2}L\left(\left(\nabla_{X_\beta}(u-\underline{u})\right)^2\right)\\
=&2\sum_{\beta=1}^{2m-2}\left(\nabla_{X_\beta}(u-\underline{u})\right)
L\left(\nabla_{X_\beta}(u-\underline{u})\right)\nonumber\\
&+2\sum_{\beta=1}^{2m-2}F^{i\bar j}\left(\nabla_i\nabla_{X_\beta}(u-\underline{u})\right)
\left(\nabla_{\bar j}\nabla_{X_\beta}(u-\underline{u})\right)\nonumber\\
=&2 \sum_{k=1}^{m-1}F^{i\bar j}\left(\left(\nabla_i\nabla_k(u-\underline{u})\right)
\left(\nabla_{\bar j}\nabla_{\bar k}(u-\underline{u})\right)
+\left(\nabla_i\nabla_{\bar k}(u-\underline{u})\right)
\left(\nabla_{k}\nabla_{\bar j}(u-\underline{u})\right)\right)\nonumber\\
&+O\left(K+K\mathcal{F}+K\sum_{k}f_k|\lambda_k|\right)\nonumber\\
\geq&2 \sum_{k=1}^{m-1}F^{i\bar j} \left(\nabla_i\nabla_{\bar k}(u-\underline{u})\right)
\left(\nabla_{k}\nabla_{\bar j}(u-\underline{u})\right)
 -CK\left(1+ \mathcal{F}+ \sum_{k}f_k|\lambda_k|\right)\nonumber\\
\geq& F^{i\bar j}(\vartheta_u)_{i\bar k}(\vartheta_u)_{k \bar j}
-CK\left(1+\mathcal{F}+\sum_{k}f_k|\lambda_k|\right)\nonumber\\
\geq&\frac{1}{2}\sum_{k\not=r}f_k\lambda_k^2
-CK\left(1+ \mathcal{F}+\sum_{k}f_k|\lambda_k|\right),\nonumber
\end{align}
where for last second inequality we use \eqref{varthetav} and the Cauchy-Schwarz inequality, and for the last inequality we use Lemma \ref{guanprop2.19}.

We set
\begin{equation*}
 \Psi:=A_1Kv+A_2K\rho^2-\frac{1}{K}\sum_{\beta=1}^{2m-2}\left(\nabla_\beta(u-\underline{u})\right)^2.
\end{equation*}
Since $\Phi_1$ is a uniform quality, it follows from \eqref{filambdai}, \eqref{lv}, \eqref{filambdaileqfilambdai21}, \eqref{phigeq0}, \eqref{betauuu} and \eqref{2betauuu} that
\begin{equation*}
\left\{\begin{array}{rl}
         L(\Phi+\Psi)\leq 0, & \quad \text{on}\quad M_\delta,\\
           \Phi+\Psi\geq 0,& \quad \text{on}\quad \partial M_\delta,
       \end{array}
\right.
\end{equation*}
with $A_1\gg A_2\gg1$  chosen sufficiently large. The minimum principle yields that $\Phi+\Psi\geq 0$ on $M_\delta.$ This, together with the definition of $\Phi,$ yields that $\nabla_{X_{2m-1}}\nabla_{X_{2m-1}}u(\mathbf{0})\leq CK.$

Now we know that
$\lambda(\vartheta_{u}(\mathbf{0}))$ lies in the compact set by \eqref{cone} and Assumption \ref{assum2} of $f$ in the introduction.
Hence we have
\begin{equation*}
 P_\infty\geq f(\lambda'(\vartheta_u')(\mathbf{0}),R)-h(\mathbf{0})>0
\end{equation*}
for $R$ sufficiently large since $f_m>0,$ which yields \eqref{mubiao}, as desired.
\end{proof}
\subsection*{Remark}Actually our proof needs no restriction on $W_{i\bar j}^p.$

\section{Second Order Interior Estimate}\label{sec2orderin}
In this section, we prove the second order interior estimates
\begin{thm}
\label{thm2orderin}
Let $(M,J,g)$ be a compact Hermitian manifold without boundary and $\dim_{\mathbb{C}}M=m,$ where $g$ is the Hermitian metric with respect to the complex structure $J.$ Suppose that $\underline{u}\in C^4(M,\mathbb{R})$ is a $\mathcal{C}$-subsolution to \eqref{cma} and that $u\in C^4(M,\mathbb{R})$ is a solution to \eqref{cma}. There exists a uniform constant $C$ depending only on background data  such that
\begin{equation*}
\sup_{  M}|\sqrt{-1}\partial\bar\partial u|_g \leq C K,
\end{equation*}
where  $K:=1+\sup_{M}|\partial u|_g^2.$
\end{thm}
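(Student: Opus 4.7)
The plan is to estimate the largest eigenvalue $\lambda_1(\vartheta_u^\flat)$ at its maximum point by the standard maximum--principle method applied to a test function of the form
\[
Q := \log \lambda_1(\vartheta_u^\flat) + \varphi(|\partial u|_g^2) + \psi(u-\underline{u}),
\]
where $\varphi(s) := -\tfrac{1}{2}\log(2K-s)$ and $\psi(s) := -A\log\!\left(1 + \tfrac{s-\inf(u-\underline u)}{2\,\mathrm{osc}(u-\underline u)}\right)$ with $A\gg 1$ to be chosen at the end. Since $\lambda_1$ is only Lipschitz, I first perturb $\vartheta_u^\flat$ by a small constant diagonal endomorphism (as in \cite{gaborjdg}) so that at the maximum point $\mathbf{p}_0$ of $Q$ the eigenvalues become simple, then choose unitary $(1,0)$ frames diagonalising $\vartheta_u^\flat$ at $\mathbf{p}_0$ and normal coordinates for $\omega$ at $\mathbf{p}_0$. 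In these frames $\xi_k{}^i=\delta_{ki}$ and formulae \eqref{fijexpression}--\eqref{ijfijaipapj} reduce to the diagonal ones.

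Differentiating $f(\lambda(\vartheta_u^\flat))=h$ twice and commuting covariant derivatives via the Ricci identities \eqref{ricciid1}--\eqref{ricciid2times}, while using \eqref{lambdakij}--\eqref{lambdakijpq} for the derivatives of $\lambda_1$, one obtains at $\mathbf{p}_0$
\[
L(\log\lambda_1) \;\geq\; \frac{1}{\lambda_1}\sum_{p\neq 1}\frac{2 f_p}{\lambda_1-\lambda_p}\bigl|\nabla_{1}(\vartheta_u)_{p\bar 1}\bigr|^2 \;-\; \frac{1}{\lambda_1^{2}}\sum_{i}f_i\bigl|\nabla_i(\vartheta_u)_{1\bar 1}\bigr|^{2} \;-\; CK\mathcal F - C,
\]
after using the concavity of $f$ to discard the $F^{ij,rs}$ contribution and absorbing the curvature/torsion of $(M,g)$ and the fixed form $\chi$ into the $-CK\mathcal F - C$ remainder. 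Derivatives falling on $W(\mathrm d u)$ generate additional errors of shape $|\partial u|_g|\nabla\vartheta_u|$ and $|\partial u|_g^{2}\mathcal F$; Condition \eqref{zu2} (precisely $Z^{j}_{i\bar j}=0$ and $\nabla_{\bar i}Z^{i}_{i\bar i}=0$) makes the most dangerous cross term vanish, so these errors split by Cauchy--Schwarz into an $\varepsilon$-absorbable piece controlled by the good term $\tfrac{1}{\lambda_1^{2}}\sum f_p|\nabla(\vartheta_u)_{p\bar 1}|^{2}$ and an admissible $C\varepsilon^{-1}K\mathcal F$.

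Next, $L(\varphi(|\partial u|_g^{2}))$ produces $\varphi''\,F^{i\bar j}\partial_i(|\partial u|_g^{2})\partial_{\bar j}(|\partial u|_g^{2})\geq 0$ plus $\varphi' F^{i\bar j}\!\left(\sum_p(|\nabla_i\nabla_p u|^{2}+|\nabla_i\nabla_{\bar p}u|^{2})\right)-CK\mathcal F$, the second--order part of which is the Hou--Ma--Wu/Sz\'ekelyhidi positive term needed to dominate the bad third--order error in $L(\log\lambda_1)$. The subsolution cutoff contributes
\[
L(\psi(u-\underline u)) \;=\; \psi' F^{i\bar j}\!\bigl((\vartheta_u)_{i\bar j}-(\vartheta_{\underline u})_{i\bar j}\bigr) \;+\; \psi''\,F^{i\bar j}\partial_i(u-\underline u)\partial_{\bar j}(u-\underline u),
\]
with $\psi'<0$ and $\psi''>0$. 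Applying Proposition \ref{gaborprop5} to $\mu = \lambda(\vartheta_{\underline u}^\flat)$ splits the analysis into two cases: either $\sum_j f_j(\mu_j-\lambda_j)>\kappa\mathcal F$, and choosing $A$ sufficiently large lets the $\psi'$-term dominate all remaining errors and directly forces $\lambda_1\leq CK$; or $f_i>\kappa\mathcal F$ for every $i$, in which case \eqref{filambdai}, Lemma \ref{lem9}(\ref{gaborlem92}) and Lemma \ref{guanprop2.19} applied to the remaining $\sum_{k\neq r} f_k\lambda_k^{2}$ produce a positive coefficient of $\lambda_1^{2}$ in the inequality $0\geq L(Q)(\mathbf p_0)$, which again yields $\lambda_1\leq CK$.

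The main obstacle will be the accounting of the third--order cross--terms generated by the gradient structure $W(\mathrm du)$: the pieces $\nabla W,\nabla^{2}W$ enter both $L(\log\lambda_1)$ and $L(|\partial u|_g^{2})$ and are of borderline order, so the proof has to arrange every Cauchy--Schwarz so that Condition \eqref{zu2} is used in exactly the right spot, and must keep track of the powers of $K$ so that the final bound on $\lambda_1(\vartheta_u^\flat)$ comes out linear in $K$ rather than quadratic. Once this is in order, the conclusion $|\sqrt{-1}\partial\bar\partial u|_g\leq CK$ follows from the bound on $\lambda_1$ together with the one--sided inequality \eqref{cone} coming from $\lambda(\vartheta_u^\flat)\in\Gamma\subset\Gamma_1$.
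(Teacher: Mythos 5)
Your proposal is correct and takes essentially the same route as the paper's proof: the Hou--Ma--Wu-type quantity $\log\lambda_1+\varphi(|\partial u|_g^2)+\psi$, the eigenvalue-derivative formulas and concavity of $f$, the structural conditions on the gradient term to tame the third-order cross terms, and the $\mathcal{C}$-subsolution dichotomy of Proposition \ref{gaborprop5} at the maximum point, exactly the scheme the paper runs before reducing to the argument of \cite{stw1503}. The divergences are inessential---you perturb $\vartheta_u^\flat$ to make $\lambda_1$ simple (as in \cite{gaborjdg,stw1503}) where the paper uses the Tosatti--Weinkove viscosity-type function $\phi$ (its remark notes both devices work), and you use the logarithmic cutoff in $u-\underline u$ instead of the exponential $\psi(u)$ after normalizing $\underline u\equiv 0$---though be aware that for the second admissible structure $W_{i\bar j}=a_i\nabla_{\bar j}u+a_{\bar j}\nabla_iu$ the paper does not argue through Condition \eqref{zu2} alone but uses the explicit form of $W$ to cancel the third-order terms directly, since the comparisons \eqref{fk}--\eqref{eq:fk2} between $f_k$ and $\tilde f_i$ are unavailable when $f$ is not of the form \eqref{stwacta2017equ2.8}.
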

This theorem is obtain by \cite{yuanarxiv2020} and \cite{stw1503} (cf.\cite{gaborjdg}). Here we sketch a slightly different proof motivated by \cite{twarxiv1906}.
\begin{proof}[Proof of Theorem \ref{thm2orderin}] For convenience, we assume that $\underline{u}=0;$ otherwise, we can use $\chi_{\underline{u}}$ to replace $\chi.$
Let $\lambda_1\geq\lambda_2\geq\cdots\geq\lambda_m$ be the eigenvalues of $\vartheta_u^\flat,$ i.e., the eigenvalues of $((\vartheta_u)_{i\bar j})$ with respect to $g.$ Then since $\sum_{i=1}^m\lambda_i>0,$ it follows that $\lambda_1>0$ and we consider the quantity
$$
H(\mathbf{x}):= \log \lambda_1(\mathbf{x})+\varsigma(|\partial u|_g^2(\mathbf{x})) +\psi(u(\mathbf{x})),\quad \forall\,\mathbf{x}\in M,
$$
where we define
 $$
\varsigma(s)=-\frac{1}{2}\log\left(1-\frac{s}{2K}\right),\quad\psi(s)=D_1e^{-D_2s},
$$
with sufficiently large uniform constants $D_1,D_2>0$ to be determined later.
Note that
$$
\varsigma\left(|\nabla u|^2_{\alpha}\right)\in[0,\,2\log 2]
$$
and
\begin{align*}
\frac{1}{4K}<\varsigma'<\frac{1}{2K},\quad \varsigma''=2(\varsigma')^2>0.
\end{align*}
We assume that $H$ attains its maximum at the interior point $\mathbf{x}_0\in M.$ It suffices to show that there holds $\lambda_1\leq CK$ at $\mathbf{x}_0$ for some uniform constant $C.$ In what follows we may assume that $\lambda_1\gg K$ at the point $\mathbf{x}_0$ without loss of generality and hence \eqref{ftau} holds.
In the followup, we will calculate at the point $\mathbf{x}_0$ under the local coordinate $(z_1,\cdots,z_m)$ for which $g$ is the identity and $\vartheta_u$ is diagonal with entries $(\vartheta_u)_{i\bar i}=\lambda_i$ for $1\leq i\leq m,$ unless otherwise indicated. Note that $(F^{i\bar j})$ is also diagonal at the point $\mathbf{x}_0$ (see Section \ref{secsubsolution}).

\begin{lem}
\label{lemlpartialug2}
There exists a uniform constant $C>0$ such that
\begin{align}
\label{lpartialug2}
 L(|\partial u|_g^2)
=&\sum_kF^{i\bar i}\left(|\nabla_i\nabla_ku|^2+|\nabla_i\nabla_{\bar k}u|^2 \right)+2\Re\left(\sum_k\left(\nabla_ku\right)\left(\nabla_{\bar k}h\right)\right) \\
&+F^{i\bar i}\left(\nabla_ku\right)\overline{T_{ki}^p}(\nabla_i\nabla_{\bar p}u)
+F^{i\bar i}\left(\nabla_{\bar k}u\right)T_{ki}^p(\nabla_p\nabla_{\bar i}u)
+O(|\partial u|_g^2)\mathcal{F}\nonumber\\
\geq&\sum_kF^{i\bar i}\left(|\nabla_i\nabla_ku|^2+(1-\varepsilon)|\nabla_i\nabla_{\bar k}u|^2 \right)\nonumber\\
&+2\Re\left(\sum_k\left(\nabla_ku\right)\left(\nabla_{\bar k}h\right)\right)
-C\varepsilon^{-1}|\partial u|_g^2 \mathcal{F},\nonumber
\end{align}
where $\varepsilon$ is an arbitrary constant with $ \varepsilon\in (0,1/2].$
\end{lem}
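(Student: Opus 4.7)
The plan is to compute $L(|\partial u|_g^2)$ by brute force in the unitary frame at $\mathbf{x}_0$ already set up in the proof of the theorem, where $g_{i\bar j}=\delta_{ij}$, $\vartheta_u$ is diagonal, and $(F^{i\bar j})=\mathrm{diag}(F^{1\bar 1},\dots,F^{m\bar m})$. At this point $|\partial u|_g^2=\sum_k(\nabla_k u)(\nabla_{\bar k}u)$, and applying the Leibniz rule for the real elliptic operator $L$ in \eqref{defnl} yields
\[
L(|\partial u|_g^2)=\sum_k\Bigl[(\nabla_{\bar k}u)L(\nabla_k u)+(\nabla_k u)L(\nabla_{\bar k}u)\Bigr]+\sum_k F^{i\bar i}\bigl(|\nabla_i\nabla_k u|^2+|\nabla_i\nabla_{\bar k}u|^2\bigr),
\]
where the Hessian cross-terms have been combined using $\nabla_{\bar i}\nabla_{\bar k}u=\overline{\nabla_i\nabla_k u}$ and $\nabla_{\bar i}\nabla_k u=\overline{\nabla_i\nabla_{\bar k}u}$. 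This produces the two Hessian sums appearing on the right-hand side of \eqref{lpartialug2}.

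Next I would compute $L(\nabla_k u)$. The Ricci identity \eqref{ricciid1} gives $\nabla_i\nabla_{\bar i}\nabla_k u=\nabla_k\nabla_i\nabla_{\bar i}u-T_{ik}^p\nabla_p\nabla_{\bar i}u$, and differentiating the equation $F(\vartheta_u^\flat)=h$ in the $k$-direction produces
\[
\sum_i F^{i\bar i}\nabla_k\nabla_i\nabla_{\bar i}u=\nabla_k h-F^{i\bar i}\nabla_k\chi_{i\bar i}-F^{i\bar i}\nabla_k\bigl(W_{i\bar i}(\mathrm{d}u)\bigr).
\]
Expanding $\nabla_k(W_{i\bar i}(\mathrm{d}u))$ using the linearity of $W$ in $\mathrm{d}u$ and the structure \eqref{ZZZ}, every term is either of the form $(\nabla_k W^p_{i\bar i})\nabla_p u$, contributing $O(|\partial u|_g)\mathcal F$, or of the form $W^p_{i\bar i}\nabla_k\nabla_p u$, which combines with the first-order pieces of $L(\nabla_k u)=F^{i\bar i}(\nabla_i\nabla_{\bar i}\nabla_k u+W_{i\bar i}^p\nabla_p\nabla_k u+\overline{W_{i\bar i}^p}\nabla_{\bar p}\nabla_k u)$ to cancel, up to $O(|\partial u|_g)\mathcal F$ remainders produced by commuting $\nabla_k\nabla_p u=\nabla_p\nabla_k u-T_{pk}^\ell\nabla_\ell u$ via \eqref{ricciid0}. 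Taking the conjugate for $L(\nabla_{\bar k}u)$ and multiplying by $\nabla_{\bar k}u$ and $\nabla_k u$ respectively, the only first-order terms that do not fold into the error are the torsion corrections $F^{i\bar i}(\nabla_k u)\overline{T_{ki}^p}\nabla_i\nabla_{\bar p}u$ and $F^{i\bar i}(\nabla_{\bar k}u)T_{ki}^p\nabla_p\nabla_{\bar i}u$, together with the linear term $2\Re\bigl(\sum_k(\nabla_k u)(\nabla_{\bar k}h)\bigr)$. This gives the identity part of \eqref{lpartialug2}.

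For the inequality, a Peter--Paul Cauchy--Schwarz applied termwise in $k$ and $p$ bounds
\[
\bigl|F^{i\bar i}(\nabla_k u)\overline{T_{ki}^p}\nabla_i\nabla_{\bar p}u\bigr|\leq \varepsilon F^{i\bar i}|\nabla_i\nabla_{\bar p}u|^2+C\varepsilon^{-1}F^{i\bar i}|T_{ki}^p|^2|\nabla_k u|^2,
\]
and after summation the first piece is absorbed into $\sum_k F^{i\bar i}|\nabla_i\nabla_{\bar k}u|^2$, leaving the factor $(1-\varepsilon)$, while the second is bounded by $C\varepsilon^{-1}|\partial u|_g^2\mathcal F$ because $|T|$ is uniformly controlled by the background geometry. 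The conjugate torsion term is handled in the same way.

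The main obstacle is bookkeeping: a large number of correction terms appear when differentiating \eqref{cma} in a non-K\"ahler setting, both from $\nabla_k\chi$, $\nabla_k W$, torsion coming from commuting $\nabla_k$ with $\nabla_i$, $\nabla_{\bar i}$, $\nabla_p$ on $u$, and the non-trivial first-order terms already present in $L$. The key point is that, because all these extra contributions are at worst quadratic in $\partial u$ with a single factor of some $F^{i\bar i}$ (summing to $\mathcal F$), they collapse into the single error $O(|\partial u|_g^2)\mathcal F$; once that reduction is in place, the rest of the lemma is an immediate application of Cauchy--Schwarz.
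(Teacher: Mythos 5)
Your proposal is correct and follows essentially the same route as the paper: a direct expansion of $L(|\partial u|_g^2)$ in the unitary frame, commutation of covariant derivatives via the Ricci identities \eqref{ricciid0}--\eqref{ricciid2}, substitution of the differentiated PDE to replace the third-order terms by $\nabla_k h$, and a Young/Cauchy--Schwarz step to absorb the torsion contractions. Your ``Leibniz rule for $L$'' packaging is equivalent to the paper's explicit expansion in \eqref{lpartialunorm}, and your isolation of the surviving torsion terms $F^{i\bar i}(\nabla_k u)\overline{T_{ki}^p}\nabla_i\nabla_{\bar p}u$ and its conjugate matches \eqref{fiinablakunablaiibar}--\eqref{fiinablakbarnablainablaibarku}. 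One small point to tidy up: when applying Peter--Paul termwise, a dimension-dependent constant arises from the sum over $k$; it is cleaner to first sum over $k$ inside the torsion contraction and apply Cauchy--Schwarz in the remaining indices, then use $\varepsilon/2$ for each of the two torsion terms so the total absorbed factor is $\varepsilon$ as stated. Also, be aware that $L(\nabla_{\bar k}u)$ is not literally $\overline{L(\nabla_k u)}$ (the commutator $[\nabla_i,\nabla_{\bar i}]$ acting on $\nabla_{\bar k}u$ produces curvature terms), but these extra pieces land in $O(|\partial u|_g^2)\mathcal F$, so the argument is unaffected.
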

\begin{proof}[Proof of Lemma \ref{lemlpartialug2}]
A direct calculation yields that
\begin{align}
\label{lpartialunorm}
L(|\partial u|_g^2)
=&F^{i\bar i}\big(
(\nabla_i\nabla_{\bar i}\nabla_pu)(\nabla_{\bar p}u)
+|\nabla_i\nabla_pu|^2
+|\nabla_{\bar i}\nabla_pu|^2
+(\nabla_pu)(\nabla_{i}\nabla_{\bar i}\nabla_{\bar p}u)
\big)\\
&+F^{i\bar i}W_{i\bar i}^k\big((\nabla_k\nabla_pu)(\nabla_{\bar p}u)
+(\nabla_pu)(\nabla_k\nabla_{\bar p}u)\big) \nonumber\\
&+F^{i\bar i}\overline{W_{i\bar i}^k}\big((\nabla_{\bar k}\nabla_pu)(\nabla_{\bar p}u)
+(\nabla_pu)(\nabla_{\bar k}\nabla_{\bar p}u)\big).\nonumber
\end{align}
One infers from  \eqref{cma} that
\begin{align}
\label{nablakh}
 \nabla_kh=&F^{i\bar j}\left(\nabla_k\chi_{i\bar j}+\nabla_k\nabla_{\bar j}\nabla_{i}u+\nabla_k(W_{i\bar j}(\mathrm{d}u))\right),\\
\label{nablakbarh}
\nabla_{\bar k}h=&F^{i\bar j}\left(\nabla_{\bar k}\chi_{i\bar j}+\nabla_{\bar k}\nabla_{\bar j}\nabla_iu+\nabla_{\bar k}(W_{i\bar j}(\mathrm{d}u))\right).
\end{align}
It follows from \eqref{ricciid0} that
\begin{align}
\label{nablakwiibardu}
 \nabla_k(W_{i\bar i}(\mathrm{d}u))
=&\nabla_k\left(W_{i\bar i}^p\nabla_pu+\overline{W_{i\bar i}^p}\nabla_{\bar p}u\right)\\
=& \left(\nabla_kW_{i\bar i}^p\right)\nabla_pu
+\overline{\left(\nabla_{\bar k}W_{i\bar i}^p\right)}\nabla_{\bar p}u\nonumber\\
&+W_{i\bar i}^p\nabla_p\nabla_ku
-W_{i\bar i}^pT_{kp}^q\nabla_qu
+\overline{W_{i\bar i}^p}\nabla_{\bar p}\nabla_{ k}u,\nonumber
\end{align}
and
\begin{align}
\label{nablakbarwiibardu}
 \nabla_{\bar k}(W_{i\bar i}(\mathrm{d}u))
=&\nabla_{\bar k}\left(W_{i\bar i}^p\nabla_pu+\overline{W_{i\bar i}^p}\nabla_{\bar p}u\right)\\
=& \left(\nabla_{\bar k}W_{i\bar i}^p\right)\nabla_pu
+\overline{\left(\nabla_{k}W_{i\bar i}^p\right)}\nabla_{\bar p}u\nonumber\\
&+W_{i\bar i}^p\nabla_p\nabla_{\bar k}u
+\overline{W_{i\bar i}^p}\nabla_{\bar p}\nabla_{\bar k}u
-\overline{W_{i\bar i}^pT_{kp}^q}\left(\nabla_{\bar q}u\right).\nonumber
\end{align}
Thanks to \eqref{ricciid0}, \eqref{ricciid1}, \eqref{ricciid2}, \eqref{nablakh}, \eqref{nablakbarh}, \eqref{nablakwiibardu} and \eqref{nablakbarwiibardu}, we can deduce
\begin{align}
\label{fiinablakunablaiibar}
 &F^{i\bar i}\nabla_ku\nabla_i\nabla_{\bar i}\nabla_{\bar k}u\\
=&F^{i\bar i}\nabla_ku\left(\nabla_{\bar k}\nabla_i\nabla_{\bar i}u
-\overline{T_{ik}^q}\nabla_{\bar q}u
+R_{i\bar ip \bar k} g^{\bar qp}\nabla_{\bar q}u\right)\nonumber\\
=&\left(\nabla_ku\right)\left(\nabla_{\bar k}h\right) +O(|\partial u|_g^2)\mathcal{F}
-F^{i\bar i}\left(\nabla_ku\right)\overline{T_{ki}^p}(\nabla_{\bar p}\nabla_iu) \nonumber\\
&-F^{i\bar i}\left(\nabla_ku\right)W_{i\bar i}^p\nabla_{p}\nabla_{\bar k}u
-F^{i\bar i}\left(\nabla_ku\right)\overline{W_{i\bar i}^p}\nabla_{\bar p}\nabla_{\bar k}u\nonumber
\end{align}
and
\begin{align}
\label{fiinablakbarnablainablaibarku}
 &F^{i\bar i}\left(\nabla_{\bar k}u\right)\left(\nabla_i\nabla_{\bar i}\nabla_{ k}u\right)\\
=&F^{i\bar i}\left(\nabla_{\bar k}u\right)\left(\nabla_k\nabla_i\nabla_{\bar i}u+T_{ki}^p\nabla_p\nabla_{\bar i}u\right)\nonumber\\
=&\left(\nabla_{\bar k}u\right)\left(\nabla_{k}h\right)+O(|\partial u|_g^2)\mathcal{F} -F^{i\bar i}\left(\nabla_{\bar k}u\right)W_{i\bar i}^p\nabla_p\nabla_ku\nonumber\\
&-F^{i\bar i}\left(\nabla_{\bar k}u\right)\overline{W_{i\bar i}^p}\nabla_k\nabla_{\bar p}u
+F^{i\bar i}\left(\nabla_{\bar k}u\right)T_{ki}^p(\nabla_p\nabla_{\bar i}u).\nonumber
\end{align}
From  \eqref{defnvartheta}, \eqref{lpartialunorm}, \eqref{nablakwiibardu}, \eqref{nablakbarwiibardu},
\eqref{fiinablakunablaiibar}, \eqref{fiinablakbarnablainablaibarku} and Young's inequality, one infers \eqref{lpartialug2}. This completes the proof of Lemma \ref{lemlpartialug2}.
\end{proof}

Since $\lambda_1$ may not be smooth at $\mathbf{x}_0,$ we define a smooth function $\phi$ on $M$ by (cf. \cite[Lemma 5]{brendleetx2017} and \cite[Proof of Theorem 3.1]{twarxiv1906})
 \begin{equation}
 \label{defnphi}
H(\mathbf{x}_0)\equiv \log \phi(\mathbf{x})+\varsigma(|\partial u|_g^2(\mathbf{x})) +\psi(u(\mathbf{x})),\quad \forall\,\mathbf{x}\in M.
\end{equation}
Note that $\phi$ satisfies
\begin{equation}
\phi(\mathbf{x})\geq \lambda_1(\mathbf{x})\quad \forall\,\mathbf{x}\in M,\quad \phi(\mathbf{x}_0)=\lambda_1(\mathbf{x}_0).
\end{equation}
Applying the operator $L$ defined in \eqref{defnl} to \eqref{defnphi}, one infers
\begin{align}
\label{0lhatx0}
0=&\frac{1}{\lambda_1}L(\phi)-\frac{1}{\lambda_1^2}F^{i\bar i}|\nabla_i\phi|^2+\varsigma'L(|\partial u|_g^2)+\psi'L(u)+\psi''F^{i\bar i}|\nabla_iu|^2\\
&+\varsigma''F^{i\bar i}
\left|\sum_p\big((\nabla_i\nabla_pu)(\nabla_{\bar p}u)
+(\nabla_pu)(\nabla_{i}\nabla_{\bar p}u)\big)\right|^2
 .\nonumber
\end{align}
Differentiating \eqref{defnphi} one can deduce
\begin{equation}
\label{nalbaiphi}
0=\frac{\nabla_i\phi}{\phi}+\varsigma'\left((\nabla_pu)(\nabla_{\bar p}\nabla_iu +(\nabla_{\bar p}u)\nabla_i\nabla_{p}u\right)+\psi'(\nabla_iu).
\end{equation}
\begin{lem}
\label{lemddbarphi}
Let $\mu$ denote the multiplicity of the largest eigenvalue of $\vartheta_u^\flat$ at $\mathbf{x}_0,$ so that
$\lambda_1=\cdots=\lambda_\mu>\lambda_{\mu+1}\geq \cdots\geq \lambda_m.$ Then at $\mathbf{x}_0,$ for each $i$ with $1\leq i\leq m,$ there hold
\begin{align}
\label{nablaiphi}
&\nabla_i(\vartheta_u)_{k\bar\ell}= (\nabla_i\phi) g_{k\bar \ell},\quad \mbox{for}\quad 1\leq k,\,\ell\leq \mu,\\
\label{ddbarphi}
&\nabla_{\bar i}\nabla_i\phi \geq \nabla_{\bar i}\nabla_{i}(\vartheta_u)_{1\bar 1}+\sum_{q>\mu}\frac{\left|\nabla_i(\vartheta_u)_{q\bar 1}\right|^2+\left|\nabla_{\bar i}(\vartheta_u)_{q\bar 1}\right|^2}{\lambda_1-\lambda_q}.
\end{align}
\end{lem}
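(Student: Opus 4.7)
The plan is to exploit the inequality $\phi \geq \lambda_1$ with equality at $\mathbf{x}_0$: for any smooth lower bound $\Psi(\mathbf{x})$ for $\lambda_1(\mathbf{x})$ with $\Psi(\mathbf{x}_0) = \lambda_1(\mathbf{x}_0)$, the function $\log\Psi - \log\phi$ attains a maximum at $\mathbf{x}_0$, which yields first- and second-order variational inequalities. Concretely, first-derivative vanishing forces $\nabla_i\Psi(\mathbf{x}_0) = \nabla_i\phi(\mathbf{x}_0)$, and the diagonal of the complex Hessian gives $\nabla_{\bar i}\nabla_i\Psi(\mathbf{x}_0) \leq \nabla_{\bar i}\nabla_i\phi(\mathbf{x}_0)$ (using that the first-order term $|\nabla_i\Psi|^2/\Psi^2 = |\nabla_i\phi|^2/\phi^2$ cancels).

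For assertion (a), I would take $\Psi = \Lambda_v$ where
\[
\Lambda_v(\mathbf{x}) := \frac{(\vartheta_u)_{k\bar\ell}(\mathbf{x}) v^k \bar v^\ell}{g_{k\bar\ell}(\mathbf{x}) v^k \bar v^\ell}
\]
for a constant (in the fixed coordinates) vector $v = v^k \partial_k$ lying in the $\lambda_1$-eigenspace at $\mathbf{x}_0$, i.e., $v^k = 0$ for $k > \mu$. By Rayleigh's min-max, $\Lambda_v \leq \lambda_1 \leq \phi$ with equality at $\mathbf{x}_0$. Expanding $\partial_i\Lambda_v(\mathbf{x}_0)$ and converting partial derivatives of tensor components to Chern-covariant ones via $\partial_i(\vartheta_u)_{k\bar\ell} = \nabla_i(\vartheta_u)_{k\bar\ell} + \Gamma_{ik}^p(\vartheta_u)_{p\bar\ell}$ and $\partial_i g_{k\bar\ell} = \Gamma_{ik}^p g_{p\bar\ell}$, the Christoffel contributions combine into $\Gamma_{ik}^\ell(\lambda_\ell - \lambda_1) v^k \bar v^\ell$, which vanishes because $v^k\bar v^\ell \neq 0$ forces $k,\ell \leq \mu$ and hence $\lambda_\ell = \lambda_1$. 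Hence $\nabla_i(\vartheta_u)_{k\bar\ell} v^k \bar v^\ell = \nabla_i\phi$ for all $v$ in the eigenspace, and polarizing over $v = e_k$, $v = (e_k + e_\ell)/\sqrt{2}$, $v = (e_k + i e_\ell)/\sqrt{2}$ yields $\nabla_i(\vartheta_u)_{k\bar\ell} = (\nabla_i\phi)\,g_{k\bar\ell}$ for $1 \leq k,\ell \leq \mu$.

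For assertion (b), since $\lambda_1$ may fail to be smooth when $\mu > 1$, I would perturb: let $B$ be the constant diagonal Hermitian matrix in our coordinates with $B_{k\bar k} = -(k-1)\epsilon$ for $2 \leq k \leq \mu$ and zero otherwise, and set $\hat\vartheta_u := \vartheta_u + B$. For small $\epsilon > 0$, $\hat\vartheta_u(\mathbf{x}_0)$ has distinct eigenvalues so its largest eigenvalue $\hat\lambda_1$ (with respect to $g$) is a smooth simple eigenvalue near $\mathbf{x}_0$; since $B \preceq 0$, we have $\hat\lambda_1 \leq \lambda_1 \leq \phi$ with equality at $\mathbf{x}_0$. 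The second-order test gives $\nabla_{\bar i}\nabla_i \hat\lambda_1(\mathbf{x}_0) \leq \nabla_{\bar i}\nabla_i\phi(\mathbf{x}_0)$, and the eigenvalue-derivative formula from Lemma~\ref{lemspruck} applied via the chain rule to the smooth function $\hat\lambda_1(\mathbf{x})$ (using $\nabla_{\bar i}\nabla_i = \partial_{\bar i}\partial_i$ on scalars) produces
\[
\nabla_{\bar i}\nabla_i \hat\lambda_1(\mathbf{x}_0) = \nabla_{\bar i}\nabla_i(\hat\vartheta_u)_{1\bar 1} + \sum_{q \neq 1}\frac{|\nabla_i(\hat\vartheta_u)_{q\bar 1}|^2 + |\nabla_{\bar i}(\hat\vartheta_u)_{q\bar 1}|^2}{\hat\lambda_1 - \hat\lambda_q},
\]
where the conversion of the partial-derivative formula to covariant form is handled exactly as in (a). Sending $\epsilon \to 0$: for $1 < q \leq \mu$ the denominator $(q-1)\epsilon \to 0$, but the numerator $\nabla_i(\vartheta_u)_{q\bar 1}(\mathbf{x}_0) = 0$ by (a), so the $\epsilon$-contributions are $O(\epsilon)$ and drop out in the limit; for $q > \mu$ the denominator is bounded below and $\hat\lambda_1 - \hat\lambda_q \to \lambda_1 - \lambda_q$, yielding the claimed sum.

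The main obstacle is the careful bookkeeping of Christoffel contributions when passing from the partial-derivative eigenvalue formula (which is what Lemma~\ref{lemspruck} provides) to the covariant-derivative expression stated in (b). In our coordinate system we only have $g(\mathbf{x}_0) = I$ and $\vartheta_u(\mathbf{x}_0)$ diagonal, not $\partial g(\mathbf{x}_0) = 0$, so residual $\Gamma$-terms must be tracked through both $\nabla_{\bar i}\nabla_i (\hat\vartheta_u)_{1\bar 1}$ and $\nabla_i (\hat\vartheta_u)_{q\bar 1}$; the identity (a) is used to kill the problematic cross terms. One also needs to justify the $\epsilon \to 0$ passage uniformly, verifying that the $O(\epsilon)$ error from the near-degenerate eigenvalues is controlled by (a).
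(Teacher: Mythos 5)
Your proposal is correct and essentially self-contained, but for part (b) it takes a genuinely different route from the one the paper invokes. The paper defers the proof of this lemma to Lemma~3.2 of Tosatti--Weinkove (arXiv:1906.10034), which is a viscosity-barrier argument in the spirit of Brendle--Choi--Daskalopoulos: the second-order inequality is obtained by constructing an explicit $C^2$ lower barrier for $\lambda_1$ tangent to second order at $\mathbf{x}_0$, without ever breaking the eigenvalue degeneracy. You instead perturb $\vartheta_u$ by a small negative diagonal constant to make all eigenvalues of $\hat\vartheta_u^\flat(\mathbf{x}_0)$ simple, apply Spruck's second-variation formula to the now-smooth $\hat\lambda_1 \leq \lambda_1 \leq \phi$, and send $\epsilon \to 0$. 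Both are legitimate; your perturbed $\hat\lambda_1$ is itself a valid smooth lower barrier, so structurally this is still a viscosity comparison, just with the barrier manufactured by tie-breaking rather than by hand. The payoff of your route is that it uses only the stock eigenvalue-perturbation formulae (Lemma~\ref{lemspruck}/\ref{lemgerhardt}); the cost is the additional $\epsilon$-limit, which as you note requires part~(a) to annihilate the $1<q\leq\mu$ terms (those numerators vanish exactly, not merely $O(\epsilon)$).

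Two small caveats on the write-up. First, you say the conversion from partial to covariant derivatives in (b) is ``handled exactly as in (a).'' It is not: in (a) the Christoffel contributions cancel because $\lambda_\ell-\lambda_1=0$ for $\ell\leq\mu$, whereas in (b) all denominators $\hat\lambda_1-\hat\lambda_q$ are nonzero by construction, so no such degeneracy is available. The cancellation in (b) is a genuine algebraic one: writing $\partial_i\hat A_p{}^q=\nabla_i\hat A_p{}^q-\Gamma_{ip}^q(\hat\lambda_p-\hat\lambda_q)$ and $\partial_{\bar i}\hat A_p{}^q=\nabla_{\bar i}\hat A_p{}^q$ at $\mathbf{x}_0$, the residual Christoffel terms produced by the quotient sum are $\sum_{r\neq 1}\bigl[\Gamma_{ir}^1\nabla_{\bar i}\hat A_1{}^r-\Gamma_{i1}^r\nabla_{\bar i}\hat A_r{}^1\bigr]$, while $\partial_{\bar i}\partial_i\hat A_1{}^1-\nabla_{\bar i}\nabla_i\hat A_1{}^1$ contributes exactly the negative of this; the $\partial_{\bar i}\Gamma_{i1}^1\hat\lambda_1$ pieces cancel internally. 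This identity is what makes the covariant form of the Spruck formula come out clean, and it is worth spelling out rather than asserting. Second, when you claim ``(a) is used to kill the problematic cross terms,'' this conflates two separate points: (a) plays no role in the Christoffel bookkeeping; its role is confined to the $\epsilon\to 0$ passage, where it guarantees $\nabla_i(\vartheta_u)_{q\bar 1}=\nabla_{\bar i}(\vartheta_u)_{q\bar 1}=0$ for $1<q\leq\mu$ (the second equality following from reality of $\vartheta_u$ and (a) applied to $(\vartheta_u)_{1\bar q}$). With those clarifications the argument is complete and the conclusion stands.
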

\begin{proof}[Proof of Lemma \ref{lemddbarphi}]
This is a slight generalization of \cite[Lemma 3.2]{twarxiv1906} since $((\vartheta_u)_{k\bar \ell})$ is not necessarily positive definite and the proof is the same as \cite[Proof of Lemma 3.2]{twarxiv1906} by replacing $\tilde g$ there with $\vartheta_u$ here since it only uses the fact that $\phi$ is smooth and satisfies \eqref{defnphi} (i.e., \cite[Formula (3.4)]{twarxiv1906}). This completes the proof of Lemma \ref{lemddbarphi}.
\end{proof}
It follows from  \eqref{defnvartheta} and \eqref{ricciid2times} that
\begin{align}
\label{fibaribariithetau1bar1}
F^{i\bar i}\nabla_{\bar i}\nabla_{i}(\vartheta_u)_{1\bar 1}
=&F^{i\bar i}\nabla_{\bar i}\nabla_{i}\chi_{1\bar 1}
+F^{i\bar i}\nabla_{\bar i}\nabla_i\left(W_{1\bar1}^p\nabla_pu+\overline{W_{1\bar 1}^q}\nabla_{\bar q}u\right)\\
&+F^{i\bar i}\nabla_{\bar 1}\nabla_{1}\nabla_{\bar i}\nabla_{i}u
-F^{i\bar i}\left(T_{i1}^p\nabla_{\bar i}\nabla_{\bar 1}\nabla_pu
+\overline{T_{i1}^q}\nabla_i\nabla_{\bar q}\nabla_{ 1}u\right)\nonumber\\
&+F^{i\bar i}\left(R_{i\bar i1}{}^p\nabla_{\bar 1}\nabla_pu-R_{1\bar 1i}{}^p\nabla_{\bar i}\nabla_pu
-T_{i1}^p\overline{T_{i1}^q}\nabla_{\bar q}\nabla_pu
\right).\nonumber
\end{align}
Differentiating both sides of \eqref{nablakh} by $\nabla_{\bar \ell}$ gives
\begin{equation}
\label{nablabarknablakh}
\nabla_{\bar \ell}\nabla_{k}h
= F^{i\bar j,p\bar q}\left(\nabla_k(\vartheta_u)_{i\bar j}\right)
\left(\nabla_{\bar\ell}(\vartheta_u)_{p\bar q}\right)
+F^{i\bar j}
\big(
\nabla_{\bar \ell}\nabla_k\chi_{i\bar j}
+\nabla_{\bar \ell}\nabla_k\nabla_{\bar j}\nabla_iu
+\nabla_{\bar \ell}\nabla_k(W_{i\bar j}(\mathrm{d}u))
\big).
\end{equation}
Substituting \eqref{nablabarknablakh} with $k=\ell=1$ into \eqref{fibaribariithetau1bar1} yields
\begin{align}
\label{fibarinalbabarinablarivarthetau1bar1}
&F^{i\bar i}\nabla_{\bar i}\nabla_{i}(\vartheta_u)_{1\bar 1}\\
=&-F^{i\bar j,p\bar q}\left(\nabla_1(\vartheta_u)_{i\bar j}\right)
\left(\nabla_{\bar1}(\vartheta_u)_{p\bar q}\right)\nonumber\\
 &+F^{i\bar i}\left(\nabla_{\bar i}\nabla_{i}\chi_{1\bar 1}
-\nabla_{\bar 1}\nabla_{1}\chi_{i\bar i}\right)
+F^{i\bar i}\left(\nabla_{\bar i}\nabla_i\left(W_{1\bar1}(\mathrm{d}u)\right)-\nabla_{\bar 1}\nabla_1\left(W_{i\bar i}(\mathrm{d}u)\right)\right)\nonumber\\
&+\nabla_{\bar 1}\nabla_1h-F^{i\bar i}\left(T_{i1}^p\nabla_{\bar i}\nabla_{\bar 1}\nabla_pu
+\overline{T_{i1}^q}\nabla_{i}\nabla_{\bar q}\nabla_1u\right)\nonumber\\
&+F^{i\bar i}\left(R_{i\bar i1}{}^p\nabla_{\bar 1}\nabla_pu-R_{1\bar 1i}{}^p\nabla_{\bar i}\nabla_pu
-T_{i1}^p\overline{T_{i1}^q}\nabla_{\bar q}\nabla_pu
\right).\nonumber
\end{align}
It follows from  \eqref{defnvartheta}, \eqref{nablaiphi} and Young's inequality that
\begin{align}
\label{fibariti1pnablainablabar1nablapurealpart}
&F^{i\bar i}\left(T_{i1}^p\nabla_{\bar i}\nabla_{\bar 1}\nabla_pu
+\overline{T_{i1}^q}\nabla_{i}\nabla_{\bar q}\nabla_1u\right)\\
=&2\Re\left(F^{i\bar i}\overline{T_{i1}^q}\nabla_{i}(\vartheta_u)_{1\bar q}
-F^{i\bar i}\overline{T_{i1}^q}\left(
\left(\nabla_iW_{1\bar q}^p\right)\nabla_pu
+W_{1\bar q}^p\nabla_i\nabla_pu
+\overline{\nabla_{\bar i}W_{1\bar q}^p}\nabla_{\bar p}u
+\overline{W_{1\bar q}^p}\nabla_i\nabla_{\bar p}u
\right)\right) \nonumber\\
\geq &2\Re\left(F^{i\bar i}\overline{T_{i1}^1}\nabla_{i}(\vartheta_u)_{1\bar 1}\right)
+2\Re\left(F^{i\bar i}\sum_{q>\mu}\overline{T_{i1}^q}\nabla_{i}(\vartheta_u)_{1\bar q}\right)\nonumber\\
&-C\lambda_1\mathcal{F}-C\sum_{p}F^{i\bar i}|\nabla_i\nabla_pu|\nonumber\\
\geq&-CF^{i\bar i}|\nabla_{i}(\vartheta_u)_{1\bar 1}|-\sum_{q>\mu}F^{i\bar i}\frac{|\nabla_{i}(\vartheta_u)_{1\bar q}|^2}{\lambda_1-\lambda_q}-C\lambda_1\mathcal{F}-C\sum_{p}F^{i\bar i}|\nabla_i\nabla_pu|,\nonumber
\end{align}
where we use the fact that $\lambda_1\gg K>1$ and that both $|u_{i\bar j}|$ and $\lambda_q$ ($q>\mu$) can be controlled by $\lambda_1.$

\textbf{Case 1:} $f$ can be rewritten as \eqref{stwacta2017equ2.8}. In this case, we actually give a slightly different proof of the second order estimates in \cite{stw1503}. Hence we just point out the main differences and sketch the similar part. Note that \begin{equation}
\label{lambdamudefn}
\mu_j=\frac{1}{m-1}\sum_{k\not=j}\lambda_k,\quad \lambda_j=\sum_{k=1}^m\mu_k-(m-1)\mu_j.
\end{equation} We collect some basic properties about $f_i$ and $\tilde f_i$ from \cite{gaborjdg,stw1503}.
If $\lambda\in\Gamma$ with $\lambda_1\geq \cdots\geq \lambda_m,$ then $\mu_1\leq \cdots\leq\mu_m$ and $\tilde f_1\geq \cdots\geq \tilde f_m.$
We have
\begin{equation}
\label{fk}
f_k = \frac{1}{m-1}\sum_{i\ne k} \widetilde{f}_i,
\end{equation}
which yields that $0<f_1  \leq \cdots \leq f_m$.  There also holds
\begin{equation}
 \label{eq:fk1}
0<\frac{\tilde{f}_1}{m-1} \leq f_k \leq \tilde{f}_1,\quad k>1.
\end{equation}
In addition, it follows from  \eqref{fk}  with $k=1$ that
 \begin{equation}
  \label{eq:fk2}
 \widetilde{f}_i \leq (m-1)f_1, \quad i> 1.
\end{equation}
It follows from Condition \eqref{zu1} and Condition \eqref{zu2} of $Z(u)$ given by \eqref{eq:c1}, \eqref{lambdamudefn}, \eqref{fk}, \eqref{eq:fk1}, \eqref{eq:fk2} that (see the argument in \cite{stw1503})
\begin{align}
\label{eq:fk3}
&F^{i\bar i}W_{i\bar i}(\nabla(\vartheta_u)_{1\bar 1})\\
=&\tilde F^{i\bar i}Z_{i\bar i}(\nabla(\vartheta_u)_{1\bar 1})\nonumber\\
=&\tilde F^{1\bar 1}\sum_{p>1}\left(Z_{1\bar1}^p \nabla_p(\vartheta_u)_{1\bar 1}+\overline{Z_{1\bar1}^p}\nabla_{\bar p} (\vartheta_u)_{1\bar 1}\right)
+\sum_{i>1}\tilde F^{i\bar i}Z_{i\bar i}(\nabla(\vartheta_u)_{1\bar 1})\nonumber\\
\leq&C\sum_{p>1}F^{p\bar p}|\nabla_p (\vartheta_u)_{1\bar 1}|
+C F^{1\bar 1} \sum_{q }|\nabla_q (\vartheta_u)_{1\bar 1}|\nonumber\\
=&C\sum_{p>1}F^{p\bar p}|\nabla_p (\vartheta_u)_{1\bar 1}|
+C F^{1\bar 1}  |\nabla_1 (\vartheta_u)_{1\bar 1}|
+C F^{1\bar 1} \sum_{q>1 }|\nabla_q (\vartheta_u)_{1\bar 1}|\nonumber\\
\leq&C\sum_{p>1}F^{p\bar p}|\nabla_p (\vartheta_u)_{1\bar 1}|
+C F^{1\bar 1}  |\nabla_1 (\vartheta_u)_{1\bar 1}|
+C \sum_{q>1 }F^{q\bar q}|\nabla_q (\vartheta_u)_{1\bar 1}|\nonumber\\
\leq&C\sum_pF^{p\bar p}|\nabla_p (\vartheta_u)_{1\bar 1}|,\nonumber
\end{align}
\begin{equation}
\label{stwacta1fqqbar11wqq}
F^{i\bar i}\nabla_{\bar i}\nabla_i(W_{1\bar 1}(\mathrm{d}u))
\geq
-C\left(F^{i\bar i}\sum_p|\nabla_i\nabla_pu|+\lambda_1\mathcal{F}\right)
\end{equation}
and
\begin{equation}
\label{stwacta1fqqqqwbar11}
F^{i\bar i}\nabla_{\bar 1}\nabla_1(W_{i\bar i}(\mathrm{d}u))
\leq
 C\left(F^{i\bar i}\left(|\nabla_i(\vartheta_u)_{1\bar 1}|+\sum_p|\nabla_i\nabla_pu|\right)+\lambda_1\mathcal{F}\right),
\end{equation}
where we also use the fact that $\lambda_1\gg K>1$ and that $|u_{i\bar j}|$ can be controlled by $\lambda_1.$
Note that \eqref{stwacta1fqqbar11wqq} and \eqref{stwacta1fqqqqwbar11} can be found in \cite{stw1503} directly (cf. \cite{zhengimrn}).

Applying the operator $L$ defined in \eqref{defnl} to $\phi,$ we can deduce from \eqref{ftau}, \eqref{nablaiphi}, \eqref{ddbarphi}, \eqref{fibarinalbabarinablarivarthetau1bar1}, \eqref{fibariti1pnablainablabar1nablapurealpart}, \eqref{eq:fk3}, \eqref{stwacta1fqqbar11wqq} and \eqref{stwacta1fqqqqwbar11} that
\begin{align}
L(\phi)
\label{estimateoflphi}
=&F^{i\bar i}\left(\nabla_{\bar i}\nabla_i\phi+W_{i\bar i}(\nabla\phi)\right)\\
\geq&F^{i\bar i}\nabla_{\bar i}\nabla_i(\vartheta_u)_{1\bar1}
+F^{i\bar i}W_{i\bar i}(\nabla (\vartheta_u)_{1\bar1})\nonumber\\
&+\sum_{q>\mu}F^{i\bar i}\frac{|\nabla_i(\vartheta_u)_{q\bar 1}|^2+|\nabla_{\bar i}(\vartheta_u)_{q\bar 1}|^2}{\lambda_1-\lambda_q}\nonumber\\
\geq&-F^{i\bar j,p\bar q}\left(\nabla_1(\vartheta_u)_{i\bar j}\right)
\left(\nabla_{\bar1}(\vartheta_u)_{p\bar q}\right)\nonumber\\
&+2\Re\left(F^{i\bar i}\overline{T_{i1}^q}\nabla_{i}(\vartheta_u)_{1\bar q}\right)
+\sum_{q>\mu}F^{i\bar i}\frac{|\nabla_i(\vartheta_u)_{q\bar 1}|^2+|\nabla_{\bar i}(\vartheta_u)_{q\bar 1}|^2}{\lambda_1-\lambda_q}\nonumber\\
&-C F^{i\bar i}\left(|\nabla_i(\vartheta_u)_{1\bar 1}|+\sum_p|\nabla_i\nabla_pu|\right)-C\lambda_1\mathcal{F}  \nonumber\\
\geq&-F^{i\bar j,p\bar q}\left(\nabla_1(\vartheta_u)_{i\bar j}\right)
\left(\nabla_{\bar1}(\vartheta_u)_{p\bar q}\right)\nonumber\\
&-C F^{i\bar i}\left(|\nabla_i(\vartheta_u)_{1\bar 1}|+\sum_p|\nabla_i\nabla_pu|\right)-C\lambda_1\mathcal{F}  \nonumber
\end{align}
From \eqref{0lhatx0} and \eqref{estimateoflphi}, one can infer that
\begin{align}
\label{0lhatx01}
0\geq&-\frac{1}{\lambda_1}F^{i\bar j,p\bar q}\left(\nabla_1(\vartheta_u)_{i\bar j}\right)
\left(\nabla_{\bar1}(\vartheta_u)_{p\bar q}\right)
-\frac{1}{\lambda_1^2}F^{i\bar i}|\nabla_i\phi|^2  \\
&+\varsigma'L(|\partial u|_g^2)+\varsigma''F^{i\bar i}
\left|\sum_p\big((\nabla_i\nabla_pu)(\partial_{\bar p}u)
+(\nabla_pu)(\nabla_{i}\nabla_{\bar p}u)\big)\right|^2
  \nonumber\\
&+\psi'L(u)+\psi''F^{i\bar i}|\nabla_iu|^2-\frac{C}{\lambda_1} F^{i\bar i}\left(|\nabla_i(\vartheta_u)_{1\bar 1}|+\sum_p|\nabla_i\nabla_pu|\right)-C \mathcal{F}.\nonumber
\end{align}
Since $\varsigma'\geq 1/(4K),$  it follows from \eqref{lpartialug2} with $\varepsilon=1/3$ and Young's inequality that
\begin{align}
\label{lgradientusquareuse2}
&\varsigma'L(|\partial u|_g^2)-\frac{C}{\lambda_1} F^{i\bar i}\sum_p|\nabla_i\nabla_pu|\\
\geq&\frac{1}{6K}\sum_kF^{i\bar i}\left(|\nabla_i\nabla_ku|^2 +|\nabla_i\nabla_{\bar k}u|^2 \right)-C\mathcal{F},\nonumber
\end{align}
where we also use the fact that $\lambda_1\gg K>1.$
Hence we can deduce from \eqref{0lhatx01} and \eqref{lgradientusquareuse2} that
\begin{align}
\label{0lhatx02}
0\geq&-\frac{1}{\lambda_1}F^{i\bar j,p\bar q}\left(\nabla_1(\vartheta_u)_{i\bar j}\right)
\left(\nabla_{\bar1}(\vartheta_u)_{p\bar q}\right)
-\frac{1}{\lambda_1^2}F^{i\bar i}|\nabla_i\phi|^2  \\
&+\frac{1}{6K}\sum_kF^{i\bar i}\left(|\nabla_i\nabla_ku|^2+|\nabla_i\nabla_{\bar k}u|^2 \right)\nonumber\\
&+\varsigma''F^{i\bar i}
\left|\sum_p\big((\nabla_i\nabla_pu)(\nabla_{\bar p}u)
+(\nabla_pu)(\nabla_{i}\nabla_{\bar p}u)\big)\right|^2
  \nonumber\\
&+\psi'L(u)+\psi''F^{i\bar i}|\nabla_iu|^2-\frac{C}{\lambda_1} F^{i\bar i} |\nabla_i(\vartheta_u)_{1\bar 1}| -C \mathcal{F}.\nonumber
\end{align}
From \eqref{nablaiphi}, we know that $\nabla_i\phi=\nabla_i(\vartheta_u)_{1\bar 1}.$ This, together with \cite[Equation (3.7)]{stw1503}, yields that \eqref{0lhatx02} is the same as \cite[Equation (3.28)]{stw1503} essentially after changing  $\nabla_p\nabla_pu$ and $\nabla_i(\vartheta_u)_{1\bar 1}$ into $\partial_i\partial_p u$ and $\partial_i(\vartheta_u)_{1\bar 1}$ respectively, and changing the coefficient of $\mathcal{F}$ into a larger uniform constant (only replacing $\tilde\lambda_{1,k}$ in \cite{stw1503} with $\nabla_k\phi $, $g_{k\bar\ell}$ in \cite{stw1503} by $(\vartheta_u)_{k\bar\ell}$, the metric $\alpha$ in \cite{stw1503} by $g$, $\phi$ in \cite{stw1503} by $\varsigma$ and $F^{k\bar k}u_{k\bar k}$ in \cite{stw1503} by $L(u)$).
After changing these notations, we can repeat the argument in \cite{stw1503} word for word to get
\begin{equation}
\lambda_1\leq CK,
\end{equation}
by replacing $\tilde H_k=0$ in \cite{stw1503} by \eqref{nalbaiphi} and replacing the paragraph between \cite[Inequality (3.53)]{stw1503}(not containing) and \cite[Inequality (3.54)]{stw1503}(containing) by $$\psi'L(u)= \psi'F^{k\bar k}((\vartheta_u)_{k\bar k}-\chi_{k\bar k}).$$
\textbf{Case 2:} $f$ cannot be rewritten as \eqref{stwacta2017equ2.8}. In this case, we actually give a slightly different proof of the second order estimates in \cite{gaborjdg,yuanarxiv2020}. Hence we just point out the main differences and sketch the similar part.

We can deduce from \eqref{ricciid0}, \eqref{ricciid1}, \eqref{ricciid2} and \eqref{nablakwiibardu} that
\begin{align}
\label{nablakwiibardu}
 \nabla_{\bar \ell}\nabla_k(W_{i\bar i}(\mathrm{d}u))
=& \left(\nabla_{\bar \ell}\nabla_kW_{i\bar i}^p\right)\nabla_pu
+\left(\nabla_kW_{i\bar i}^p\right)\left(\nabla_{\bar \ell}\nabla_pu\right)\\
&+\overline{\left(\nabla_\ell\nabla_{\bar k}W_{i\bar i}^p\right)}\nabla_{\bar p}u
+\overline{\left(\nabla_{\bar k}W_{i\bar i}^p\right)}
\nabla_{\bar \ell}\nabla_{\bar p}u\nonumber\\
&+(\nabla_{\bar \ell}W_{i\bar i}^p)\left(\nabla_p\nabla_ku-T_{kp}^q\nabla_qu\right)
+W_{i\bar i}^p\nabla_{\bar \ell}\nabla_k\nabla_pu\nonumber\\
&+\overline{\nabla_{\ell}W_{i\bar i}^p} \nabla_{\bar p}\nabla_ku
+\overline{W_{i\bar i}^p}\nabla_{\bar \ell}\nabla_{\bar p}\nabla_{ k}u\nonumber\\
=& \left(\nabla_{\bar \ell}\nabla_kW_{i\bar i}^p\right)\nabla_pu
+\left(\nabla_kW_{i\bar i}^p\right)\left(\nabla_{\bar \ell}\nabla_pu\right)\nonumber\\
&+\overline{\left(\nabla_\ell\nabla_{\bar k}W_{i\bar i}^p\right)}\nabla_{\bar p}u
+\overline{\left(\nabla_{\bar k}W_{i\bar i}^p\right)}
\left(\nabla_{\bar p}\nabla_{\bar \ell}u-\overline{T_{\ell p}^q}\nabla_{\bar q}u\right)\nonumber\\
&+(\nabla_{\bar \ell}W_{i\bar i}^p)\left(\nabla_p\nabla_ku-T_{kp}^q\nabla_qu\right)\nonumber\\
&+W_{i\bar i}^p\left(\nabla_p\nabla_{\bar \ell}\nabla_ku
-T_{kp}^q\nabla_q\nabla_{\bar\ell}u
+R_{k\bar\ell p}{}^q\nabla_qu\right)\nonumber\\
&+\overline{\nabla_{\ell}W_{i\bar i}^p} \nabla_{\bar p}\nabla_ku
+\overline{W_{i\bar i}^p}\left(\nabla_{\bar p}\nabla_{\bar \ell}\nabla_{ k}u-\overline{T_{\ell p}^q}\nabla_{\bar q}\nabla_ku\right).\nonumber
\end{align}
A direct calculation with \eqref{nablakh} yields that
\begin{align}
\label{fibariwibaripnablapnablabarinablaiu}
&F^{i\bar i}W_{1\bar 1}^p\nabla_p\nabla_{\bar i}\nabla_iu\\
=&W_{1\bar 1}^p\nabla_pu-F^{i\bar i}W_{1\bar 1}^p\left(\nabla_p\chi_{i\bar i}+(\nabla_pW_{i\bar i}^q)\nabla_qu
+ \overline{\nabla_{\bar p}W_{i\bar i}^q} \nabla_{\bar q}u
+W_{i\bar i}^q\nabla_p\nabla_qu
+W_{i\bar i}^q\nabla_p\nabla_{\bar q}u
\right).\nonumber
\end{align}
Substituting \eqref{secondwdu} and \eqref{fibariwibaripnablapnablabarinablaiu}   into \eqref{nablakwiibardu} with $(\ell,k,i,i)=(i,i,1,1)$ yields that
\begin{align}
\label{fibarinalbabarinablaiw1bar1du}
F^{i\bar i}\nabla_{\bar i}\nabla_i(W_{1\bar 1}(\mathrm{d}u))
=&O(|\partial u|_g)\mathcal{F}+O(1)\sum_pF^{i\bar i}(|\nabla_p\nabla_iu|+|\nabla_p\nabla_{\bar i}u|)\\
 &+2\Re\left(F^{i\bar i}W_{1\bar 1}^p\nabla_p\nabla_{\bar i}\nabla_iu\right) \nonumber\\
\geq &-CF^{i\bar i}|\nabla_i\nabla_1u|-C\lambda_1\mathcal{F} ,\nonumber
\end{align}
where we use \eqref{ftau} and the  fact that $\lambda_1\gg K>1.$

It follows from \eqref{nablakwiibardu} with $(\ell,k)=(1,1)$ that
\begin{equation}
\label{fibarinablabar1nabla1wibari}
-F^{i\bar i}\nabla_{\bar 1}\nabla_1(W_{i\bar i}(\mathrm{d}u))
\geq -F^{i\bar i}a_{\bar i}\nabla_i\nabla_{\bar 1}\nabla_1 u
-F^{i\bar i}a_{ i}\nabla_{\bar i}\nabla_{\bar 1}\nabla_1 u
-CF^{i\bar i}|\nabla_i\nabla_1u|-C\lambda_1\mathcal{F},
\end{equation}
where we use the fact  $\lambda_1\gg K>1.$

We can deduce from  \eqref{defnvartheta},  \eqref{secondwdu} and \eqref{ftau} that
\begin{align}
\label{fiithetauibar1ithetau1bar1bari}
&F^{i\bar i}
\left(
W_{i\bar i}^p\nabla_p(\vartheta_u)_{1\bar 1}
+\overline{W_{i\bar i}^p}\nabla_{\bar p}(\vartheta_u)_{1\bar 1} \right)\\
=&F^{i\bar i}W_{i\bar i}^p\left(
\nabla_p\chi_{1\bar 1}
+\nabla_p\nabla_{\bar 1}\nabla_1u
+\nabla_pW_{1\bar 1}^q\nabla_qu
+W_{1\bar 1}^q\nabla_p\nabla_qu
+\overline{\nabla_{\bar p}W_{1\bar 1}^q}\nabla_{\bar q}u
+\overline{W_{1\bar 1}^q}\nabla_{\bar q}\nabla_pu\right)\nonumber\\
&+F^{i\bar i}\overline{W_{i\bar i}^p}\left(
\nabla_{\bar p}\chi_{1\bar 1}
+\nabla_{\bar p}\nabla_{\bar 1}\nabla_1u
+\nabla_{\bar p}W_{1\bar 1}^q\nabla_qu
+W_{1\bar 1}^q\nabla_{\bar p}\nabla_qu
+\overline{\nabla_{ p}W_{1\bar 1}^q}\nabla_{\bar q}u
+\overline{W_{1\bar 1}^q}\nabla_{\bar q}\nabla_{\bar p}u\right)\nonumber\\
\geq&F^{i\bar i}a_{\bar i}\nabla_i\nabla_{\bar 1}\nabla_1u
+F^{i\bar i}a_i\nabla_{\bar i}\nabla_{\bar 1}\nabla_1u
-CF^{i\bar i}|\nabla_i\nabla_1u|-C\lambda_1\mathcal{F},\nonumber
\end{align}
where we also use the fact that $\lambda_1\gg K>1.$

Applying the operator $L$ defined in \eqref{defnl} to $\phi,$ we can deduce from \eqref{nablaiphi}, \eqref{ddbarphi}, \eqref{fibarinalbabarinablarivarthetau1bar1}, \eqref{fibariti1pnablainablabar1nablapurealpart}, \eqref{fibarinalbabarinablaiw1bar1du}, \eqref{fibarinablabar1nabla1wibari} and \eqref{fiithetauibar1ithetau1bar1bari} that
\begin{align}
L(\phi)
\label{estimateoflphicase2}
=&F^{i\bar i}\left(\nabla_{\bar i}\nabla_i\phi+W_{i\bar i}(\nabla\phi)\right)\\
\geq&F^{i\bar i}\nabla_{\bar i}\nabla_i(\vartheta_u)_{1\bar1}
+F^{i\bar i}W_{i\bar i}(\nabla (\vartheta_u)_{1\bar1})\nonumber\\
&+\sum_{q>\mu}F^{i\bar i}\frac{|\nabla_i(\vartheta_u)_{q\bar 1}|^2+|\nabla_{\bar i}(\vartheta_u)_{q\bar 1}|^2}{\lambda_1-\lambda_q}\nonumber\\
\geq&-F^{i\bar j,p\bar q}\left(\nabla_1(\vartheta_u)_{i\bar j}\right)
\left(\nabla_{\bar1}(\vartheta_u)_{p\bar q}\right)\nonumber\\
&-C F^{i\bar i}\left(|\nabla_i(\vartheta_u)_{1\bar 1}|+ |\nabla_i\nabla_1u|\right)-C\lambda_1\mathcal{F}.  \nonumber
\end{align}
It follows from \eqref{estimateoflphicase2} and the arguments in Case 1 that we can also have \eqref{0lhatx02}, which the same as \cite[Equation (3.28)]{stw1503} essentially after changing  $\nabla_p\nabla_pu$ and $\nabla_i(\vartheta_u)_{1\bar 1}$ into $\partial_i\partial_p u$ and $\partial_i(\vartheta_u)_{1\bar 1}$ respectively, and changing the coefficient of $\mathcal{F}$ into a larger uniform constant (only replacing $\tilde\lambda_{1,k}$ in \cite{stw1503} with $\nabla_k\phi $, $g_{k\bar\ell}$ in \cite{stw1503} by $(\vartheta_u)_{k\bar\ell}$, the metric $\alpha$ in \cite{stw1503} by $g$, $\phi$ in \cite{stw1503} by $\varsigma$ and $F^{k\bar k}u_{k\bar k}$ in \cite{stw1503} by $L(u)$).
Note that in this case, $Z(u)$ given by \eqref{eq:c1} also satisfies
Condition \eqref{zu1} and Condition \eqref{zu2}, and  the argument after
\cite[Equation (3.28)]{stw1503}  does not use \eqref{lambdamudefn}, \eqref{fk}, \eqref{eq:fk1} or \eqref{eq:fk2}.
Hence after changing these notations, we can repeat the argument after
\cite[Equation (3.28)]{stw1503}  word for word to get
\begin{equation}
\lambda_1\leq CK,
\end{equation}
by replacing $\tilde H_k=0$ in \cite{stw1503} by \eqref{nalbaiphi} and replacing the paragraph between \cite[Inequality (3.53)]{stw1503}(not containing) and \cite[Inequality (3.54)]{stw1503}(containing) by $$\psi'L(u)= \psi'F^{k\bar k}((\vartheta_u)_{k\bar k}-\chi_{k\bar k}).$$
This completes the proof of Theorem \ref{thm2orderin}.
\end{proof}

\subsection*{Remark} In Case 2 above, we can choose different auxiliary functions in \cite{gaborjdg} to simplify the argument. Here considering the length of the paper, we just use the same auxiliary functions as the ones used in Case 1.

In the proof above, we used a viscosity type argument to deal with the non-differentiability of the largest eigenvalue $\lambda_1 $ as in \cite{twarxiv1906} (cf.\cite{brendleetx2017}). The authors of \cite{gaborjdg,stw1503} use a perturbation argument to deal with this issue.
As pointed out in \cite{stw1503}, maybe we can overcome the  non-differentiability of the largest eigenvalue $\lambda_1 $ by a carefully chosen quadratic function $(\vartheta_u)_{i\bar j}$ as in \cite{twjams,twcrelle}.

Thanks to \eqref{cone}, Theorem \ref{thm2ndonbd} and Theorem \ref{thm2orderin}, we can deduce
\begin{cor}
Let $(M,J,g)$ be a compact Hermitian manifold with smooth boundary, $\dim_{\mathbb{C}}M=m$, and the canonical complex structure $J$, where $g$ is the Hermitian metric. Suppose that $\underline{u}\in C^4(M,\mathbb{R})$ is an admissible  subsolution to\eqref{cma}-\eqref{cmabv} and that $u\in C^4(M,\mathbb{R})$ is a solution to \eqref{cma}-\eqref{cmabv}. There exists a uniform constant $C_K$ depending only on background data  and $K$such that
\begin{align}
\sup_M\left|\lambda_j(\vartheta_u^\flat) \right|\leq &C_K;\nonumber\\
\label{fjbd}
C_K^{-1}\leq f_j\leq& C_K,\quad j=1,\cdots,m.
\end{align}
\end{cor}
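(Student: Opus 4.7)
The plan is to combine the boundary estimate already furnished by Theorem \ref{thm2ndonbd} with the interior maximum-principle argument of Theorem \ref{thm2orderin}, and then to convert the resulting two-sided eigenvalue bound into uniform bounds on $f_j$ using the structural assumptions on $f$ and $\Gamma$.

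First I would prove $\sup_M\lambda_1(\vartheta_u^\flat)\leq C_K$. Consider the same test function
\[
H(\mathbf{x})=\log\lambda_1(\mathbf{x})+\varsigma(|\partial u|_g^2(\mathbf{x}))+\psi(u(\mathbf{x}))
\]
used in the proof of Theorem \ref{thm2orderin}. Since $H$ is continuous on $\overline{M}$, it attains its maximum either on $\partial M$ or at an interior point. If the maximum lies on $\partial M$, Theorem \ref{thm2ndonbd} yields $|\mathrm{Hess}_g u|_g\leq C_K$ there, hence $\lambda_1\leq C_K$ at the maximum point, and the uniform boundedness of $\varsigma$ and $\psi$ propagates this bound to all of $M$. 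If the maximum is attained at an interior point $\mathbf{x}_0$, the local computation producing \eqref{0lhatx02} and the subsequent analysis of Theorem \ref{thm2orderin} depend only on information at $\mathbf{x}_0$ and therefore apply verbatim, giving $\lambda_1(\mathbf{x}_0)\leq CK$. In either case we obtain $\sup_M\lambda_1(\vartheta_u^\flat)\leq C_K$.

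Next I would extend this bound to all eigenvalues. By \eqref{cone}, $\lambda(\vartheta_u^\flat)\in\Gamma\subset\Gamma_1$, so $\sum_{i=1}^m\lambda_i>0$ pointwise; hence the smallest eigenvalue satisfies $\lambda_m\geq -\sum_{i<m}\lambda_i\geq -(m-1)C_K$. Combined with $\lambda_1\leq C_K$, this gives $|\lambda_j(\vartheta_u^\flat)|\leq C_K$ on $M$ (after enlarging the constant). Finally, the map $\mathbf{x}\mapsto\lambda(\vartheta_u^\flat(\mathbf{x}))$ has image contained in the closed ball $\overline{B_{C_K}(\mathbf{0})}$, and at every point $f(\lambda(\vartheta_u^\flat))=h\geq\inf_M h>\sup_{\partial\Gamma}f$ by Assumption \eqref{assum2}. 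Therefore this image lies in a compact subset $\mathcal{K}\subset\Gamma$ that is uniformly separated from $\partial\Gamma$. Since each $f_j$ is smooth on $\Gamma$ and strictly positive by Assumption \eqref{assum1}, the continuous functions $f_j$ attain both a positive minimum and a finite maximum on $\mathcal{K}$, which is exactly \eqref{fjbd}.

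The main obstacle is the first step: one must certify that the purely interior argument of Theorem \ref{thm2orderin}, stated there for manifolds without boundary, can be run on $M$ with boundary. This reduces to checking that the test function $H$ is controlled on $\partial M$, and this is precisely the content of Theorem \ref{thm2ndonbd}; the remaining compactness/positivity arguments for the bounds on $f_j$ are routine.
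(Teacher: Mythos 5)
Your proposal is correct and follows essentially the same route as the paper: the paper's proof of this corollary simply combines Theorem \ref{thm2ndonbd} and Theorem \ref{thm2orderin} to bound $|\sqrt{-1}\partial\bar\partial u|$ (the interior maximum-principle argument being applicable precisely because a boundary maximum of the test quantity is handled by Theorem \ref{thm2ndonbd}), and then uses \eqref{cone} and Assumption \eqref{assum2} to place $\lambda(\vartheta_u^\flat)$ in a compact subset of $\Gamma$, giving \eqref{fjbd}. You merely make explicit the boundary-versus-interior maximum dichotomy and the lower eigenvalue bound from $\Gamma\subset\Gamma_1$, which the paper leaves implicit.
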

\begin{proof}
From Theorem \ref{thm2ndonbd} and Theorem \ref{thm2orderin} it follows that
\begin{equation*}
 |\sqrt{-1}\partial\bar\partial u|_M\leq C_K.
\end{equation*}
This, together with  and Assumption \ref{assum2} of $f$ in the Introduction, yields that $\lambda\left(\vartheta_u^\flat\right)$ lies in the compact set
\begin{equation*}
 \left\{f\geq \inf_Mh>\sup_{\partial \Gamma}f\right\}\cap \bar B_R(\mathbf{0})\subset \Gamma,
\end{equation*}
for some uniform $R>0,$ where $B_R(\mathbf{0})\subset \mathbb{R}^m$ is a ball centered at the origin with radius $R,$ as desired.
\end{proof}
\section{First Order Estimate}\label{sec1st}
In this section,  prove the first order estimates of the solutions to the Dirichlet problem \eqref{cma}-\eqref{cmabv}   with $ f=\left(\sigma_m/\sigma_\ell\right)^{1/(m-\ell)},$  $\Gamma=\Gamma_m$ and $1\leq \ell\leq m-1.$

\begin{proof}
[Proof of Theorem \ref{thmhessianquotient}] It is sufficient to prove the upper bound of the gradient of the solution.
 It is equivalent to obtain the upper bound of $|\partial u|_g$, where $u$ is the solution to the Dirichlet problem
 \begin{equation}
\label{cma0}
\left\{\begin{array}{rl}
          F\left(\vartheta_u^{\flat}\right)=f\left(\lambda\left(\vartheta_u^{\flat}\right)\right)=h, &\quad\text{on}\quad M, \\
         u=0, & \quad\text{on}\quad \partial M,
       \end{array}
\right.
\end{equation}
where $0$ is an admissible subsolution to \eqref{cma0}. Note that $\chi$ plays the role of $\vartheta_{\underline{u}}$ in Theorem \ref{mainthm}.
Recall from \cite{cnsacta} that a cone $\Gamma$ is of type 1 if
all positive $\lambda_i$ axis belong to $\partial \Gamma$ (e.g., all the $\Gamma_k$ with $2\leq k\leq m$). Note that the cone in our theorem is also of type 1.
For the cone of type 1, it follows from \cite{guanarxiv2014} that the notion of $\mathcal{C}$-subsolution in \cite{gaborjdg} and the  notion of subsolution in \cite{guan2014} are equivalent. Hence by
\cite[Theorem 2.18]{guan2014} (see  \cite[Lemma 2.2]{guanshisui2015} for a refined version) there exist  constants $R>0$ and $\kappa>0$ such that if $|\lambda(\vartheta_{u}^\flat)|\geq R$, then there holds
\begin{equation}
\label{ijfijetaij}
\sum_{i,j}F^{ij}(\vartheta_{u}^\flat)\left((\chi^\flat)_i{}^j-(\vartheta_{u}^\flat)_i{}^j\right)\geq \kappa+\kappa\mathcal{F}.
\end{equation}
We assume that $\sup_M|\partial u|_g\gg 1;$ otherwise the conclusion follows.

Thanks to Assertion \eqref{gaborlem92} of Lemma \ref{lem9}, we deduce that
\begin{equation}
\label{ftau6}
\mathcal{F}\geq \tau.
\end{equation}
We use the auxiliary function in \cite{ctwjems} in the almost complex case
\begin{equation*}
 P:=e^{\rho(\eta)}|\partial u|_g^2,
\end{equation*}
where $\eta= -u+\sup_Mu+1$ and $\rho\in C^\infty(\mathbb{R},\mathbb{R})$ will be determined later.

We assume that $P$ attains its maximum at the interior point $\mathbf{x}_0;$ otherwise the conclusion follows from \eqref{equzeroorder}.
Let $e_1,\cdots,e_m$ be the local unitary frame with respect to $g$ near $\mathbf{x}_0$ such that $\left((\vartheta_u)_{i\bar j}\right)$ and $\left(F^{i\bar j}\right)$  are diagonal  by \eqref{fijexpression}.
At $\mathbf{x}_0,$ 
we have
\begin{equation}
 \label{0geqlp}
0\geq L(P)
=e^{\rho}L\left(|\partial u|_g^2\right)+|\partial u|_g^2L(e^\rho)+2\Re\left(\sum_iF^{i\bar i}\left(\nabla_i\left(|\partial u|_g^2\right)\right)\left(\nabla_{\bar i}\left(e^\rho\right)\right)\right),
\end{equation}
where $L$ is defined by \eqref{defnl}.

From \eqref{defnvartheta}, \eqref{defnl} and \eqref{ijfijaij}, it follows that
\begin{align}
\label{levarphi}
 L(e^\rho)
=&e^\rho\left(\rho''+(\rho')^2\right)F^{i\bar i}|\nabla_i \eta|^2+e^\rho \rho'\sum_{i}F^{i\bar i}\left(\nabla_i\nabla_{\bar i}\eta+W_{i\bar i}(\mathrm{d}\eta)\right)\\
=&e^\rho\left(\rho''+(\rho')^2\right)F^{i\bar i}|\nabla_i \eta|^2+e^\rho \rho'\sum_{i}F^{i\bar i}\left(\chi_{i\bar i}-(\vartheta_{u})_{i\bar i}\right).\nonumber
\end{align}

A direct calculation yields that
 \begin{align}
 \label{refii}
 &2\Re\left(F^{i\bar i}\nabla_i(e^\rho)\nabla_{\bar i}(|\partial u|_g^2)\right)\\
=&2\Re\left(\sum_{k}e^\rho \rho'F^{i\bar i}\left(\nabla_i\eta\right)\left(\nabla_k\nabla_{\bar i}u\right)\left(\nabla_{\bar k}u\right) \right)
+2\Re\left(\sum_{k}e^\rho\rho'F^{i\bar i}\left(\nabla_i\eta\right)\left(\nabla_{\bar i}\nabla_{\bar k}u\right)\left(\nabla_{k}u\right) \right),\nonumber
\end{align}
where
\begin{align}
\label{refii1}
 &2\Re\left(\sum_{k}e^\rho\rho'F^{i\bar i}\left(\nabla_i\eta\right)\left(\nabla_k\nabla_{\bar i}u\right)\left(\nabla_{\bar k}u\right) \right)\\
=&2\Re\left(\sum_{k}e^\rho\rho'F^{i\bar i}\left(\nabla_i\eta\right)\left((\vartheta_u)_{k\bar i}-\chi_{k\bar i}-W_{k\bar i}(\mathrm{d}u) \right)\left(\nabla_{\bar k}u\right) \right)\nonumber\\
\geq&2 \sum_{k}e^\rho\rho'F^{i\bar i}\left(\nabla_i\eta\right) (\vartheta_u)_{k\bar i} \left(\nabla_{\bar k}u\right)
-2 \sum_{k}e^\rho\rho'F^{i\bar i}\left(\nabla_i\eta\right) \chi_{k\bar i} \left(\nabla_{\bar k}u\right)  \nonumber\\
&-\varepsilon e^\rho(\rho')^2|\partial u|_g^2\sum_{i}F^{i\bar i}\left(\nabla_i\eta\right)\left(\nabla_{\bar i}\eta\right)
 -C\varepsilon^{-1}e^\rho|\partial u|_g^2\mathcal{F},\nonumber
\end{align}
and
\begin{align}
\label{refii2}
 &2\Re\left(\sum_{k}e^\rho\rho'F^{i\bar i}\left(\nabla_i\eta\right)\left(\nabla_{\bar i}\nabla_{\bar k}u\right)\left(\nabla_{k}u\right) \right)\\
\geq&-(1-\varepsilon)\sum_kF^{i\bar i}e^\rho|\nabla_i\nabla_ku|^2
-(1+2\varepsilon)e^\rho(\rho')^2|\partial u|_g^2\sum_iF^{ii}\left(\nabla_i\eta\right)\left(\nabla_{\bar i}\eta\right),\nonumber
\end{align}
where we use
\begin{equation*}
  2ab\geq -(1-\varepsilon)a^2-(1+2\varepsilon)b^2,\quad \forall \varepsilon\in(0,1/2].
\end{equation*}
Substituting \eqref{refii1} and \eqref{refii2} into \eqref{refii} implies that
\begin{align}
\label{refiiesti}
  &2\Re\left(F^{i\bar i}\nabla_i(e^\rho)\nabla_{\bar i}(|\partial u|_g^2)\right)\\
\geq&2 \sum_{k}e^\rho\rho'F^{i\bar i}\left(\nabla_iu\right) (\vartheta_u)_{k\bar i} \left(\nabla_{\bar k}u\right)
 -C_0\varepsilon^{-1}e^\rho |\partial u|_g^2\mathcal{F}-(1-\varepsilon)\sum_iF^{ii}e^\rho|\nabla_i\nabla_ku|^2\nonumber\\
&
-(1+3\varepsilon)e^\rho(\rho')^2|\partial u|_g^2\sum_iF^{ii}\left(\nabla_i\eta\right)\left(\nabla_{\bar i}\eta\right)
-2 \sum_{k}e^\rho\rho'F^{i\bar i}\left(\nabla_i\eta\right) \chi_{k\bar i} \left(\nabla_{\bar k}u\right)  .\nonumber
\end{align}
Thanks to \eqref{0geqlp}, \eqref{levarphi}, \eqref{lpartialug2} and \eqref{refiiesti}, one can deduce that
\begin{align}
\label{0geqevarphileft}
0\geq & \left(\rho''-3\varepsilon(\rho')^2\right)|\partial u|_g^2\sum_iF^{i\bar i}|\nabla_i \eta|^2
+ \rho'|\partial u|_g^2\sum_{i}F^{i\bar i}  \left(\chi_{i\bar i}-(\vartheta_{u})_{i\bar i}\right)
-C_0\varepsilon^{-1} |\partial u|_g^2\mathcal{F}\\
&+2 \sum_{k} \rho'F^{i\bar i}\left(\nabla_i\eta\right)(\vartheta_u)_{k\bar i} \left(\nabla_{\bar k}u\right)
+2 \Re\left(\sum_k\left(\nabla_ku\right)\left(\nabla_{\bar k}h\right)\right)\nonumber\\
&-2 \sum_{k} \rho'F^{i\bar i}\left(\nabla_i\eta\right) \chi_{k\bar i} \left(\nabla_{\bar k}u\right).  \nonumber
\end{align}
We take
\begin{equation*}
 \rho(\eta):=\frac{e^{A\eta}}{A},\quad \varepsilon:=\frac{Ae^{-A\eta(\mathbf{x}_0)}}{6}.
\end{equation*}
Then it follows from \eqref{0geqevarphileft} that
\begin{align}
\label{0geqevarphileftn}
0\geq & \frac{A}{2}\sum_iF^{i\bar i}|\nabla_i u|^2
+  \sum_{i}F^{i\bar i}  \left(\chi_{i\bar i}-(\vartheta_{u})_{i\bar i}\right)
-\frac{6C_0}{A}\mathcal{F}\\
&-\frac{2}{|\partial u|_g^2} \sum_{i} F^{i\bar i}\left(\nabla_iu\right)(\vartheta_u)_{i\bar i} \left(\nabla_{\bar k}u\right)
+\frac{2}{e^{A\eta(\mathbf{x}_0)}|\partial u|_g^2} \Re\left(\sum_k\left(\nabla_ku\right)\left(\nabla_{\bar k}h\right)\right)\nonumber\\
&+\frac{2}{|\partial u|_g^2} \sum_{k} F^{i\bar i}\left(\nabla_iu\right) \chi_{k\bar i} \left(\nabla_{\bar k}u\right).  \nonumber
\end{align}
The following argument from \cite{guansun2015} splits into two cases.

\textbf{Case 1:}$|\lambda(\vartheta_{u}^\flat|\geq R.$
Note that  $(\chi_{k\bar i})$ is positive definite and that
\begin{align}
\label{onekindestimate}
&-2 \sum_{k}  F^{i\bar i}\left(\nabla_iu\right)(\vartheta_u)_{k\bar i} \left(\nabla_{\bar k}u\right)\\
\geq &-2 |\partial u|_g^2\sum_iF^{i\bar i}(\vartheta_u)_{i\bar i}
=-2 |\partial u|_g^2 f(\lambda(\vartheta_u^\flat))
 \geq  -C |\partial u|_g^2,\nonumber
\end{align}
where we use the fact that
\begin{equation}
\label{filambdai2}
\sum_iF^{i\bar i}(\vartheta_u)_{i\bar i}= f(\lambda(\vartheta_u^\flat)).
\end{equation}
\textbf{Subcase 1.1:} there holds $F^{i\bar i}\geq K$ for some $i$ and $K$ sufficiently large.
Proof by contradiction, together with \eqref{ijfijetaij}, \eqref{0geqevarphileftn} and \eqref{onekindestimate}, yields that $|\partial u|_g\leq C.$

\textbf{Subcase 1.2:} there holds $F^{i\bar i}\leq K$ for all $i,$ then we assume that $(\vartheta_u)_{1\bar 1}\leq \cdots\leq (\vartheta_u)_{m\bar m}.$ For convenience, we set $$\mu_i=\frac{1}{\lambda_i}=\frac{1}{(\vartheta_u)_{i\bar i}},\quad \mbox{for}\quad 1\leq i\leq m.$$
Then we have
\begin{equation*}
f(\lambda)=(\sigma_{m-\ell}(\mu))^{-1/(m-\ell)}=(hC_m^{\ell})^{-1/(m-\ell)},
\end{equation*}
and
\begin{equation}
F^{i\bar i}=f_i=\frac{1}{m-\ell}f^{m-\ell+1}\mu_i^2\sigma_{m-\ell-1;i}(\mu),
\end{equation}
where $\sigma_{m-\ell-1;i_1\cdots i_s}(\mu):=\sigma_{m-\ell-1}(\mu_{|\mu_{i_1}=\cdots=\mu_{i_s}=0}).$
Note that
\begin{equation*}
  \Pi_{i=1}^{m-\ell}\mu_i\geq \frac{\sigma_{m-\ell}(\mu)}{C^\ell_m}= h,
\end{equation*}
which yields that
\begin{equation}
\label{mu1estimate}
\mu_1\leq (\mu_1)^2h^{-1}\Pi_{i=2}^{m-\ell}\mu_i\leq (\mu_1)^2h^{-1}\sigma_{m-\ell-1;1}(\mu)
\leq \frac{(m-\ell)K}{f^{m-\ell+1}h^{-1}}.
\end{equation}
It follows from \eqref{mu1estimate} that
\begin{equation}
\label{sigmamell}
\sigma_{m-\ell-1;i}(\mu)\leq C_m^{\ell+1}(\mu_1)^{m-\ell-1}\leq K',\quad \mbox{for}\quad 1\leq i\leq m.
\end{equation}
The Cauchy-Schwarz inequality, together with \eqref{sigmamell}, yields that
\begin{align}
\label{anotherkindestimate}
&-2 \sum_{k}  F^{i\bar i}\left(\nabla_iu\right)(\vartheta_u)_{k\bar i} \left(\nabla_{\bar k}u\right)\\
\geq &-\frac{1}{m-\ell}f^{m-\ell+1}\sum_i\sigma_{m-\ell-1;i}(\mu)-\frac{1}{m-\ell}f^{m-\ell+1}|\partial u|_g^2 \sum_i\sigma_{m-\ell-1;i}(\mu)\mu_i^2|\nabla_iu|^2\nonumber\\
\geq&-C-|\partial u|_g^2 \sum_iF^{i\bar i}|\nabla_iu|^2.\nonumber
\end{align}
Since $(\chi_{k\bar i})$ is positive definite, it follows from \eqref{ijfijetaij}, \eqref{0geqevarphileftn} and \eqref{anotherkindestimate} that
\begin{equation*}
0\geq   \left(\frac{A}{2}-1\right)\sum_iF^{i\bar i}|\nabla_i u|^2
+  \kappa\mathcal{F}+\kappa
-\frac{6C_0}{A}\mathcal{F}
 -\frac{C}{|\partial u|_g^2}  -\frac{C}{e^{A\eta(\mathbf{x}_0)}},
\end{equation*}
which yields $|\partial u|_g\leq C$ with $A$ sufficiently large.

\textbf{Case 2:} $|\lambda(\vartheta_u^\flat)|\leq R.$ One can deduce that
\begin{equation*}
\left\{\lambda\in\Gamma:\;f(\lambda)\geq \inf_{M}h>\sup_{\partial\Gamma} f\right\}\cap \overline{B_R(\mathbf{0})}\subset\Gamma
\end{equation*}
is a compact set, and hence there exists a constant $C_3$ depending on the background data such that
\begin{equation*}
C_3\geq f_i\geq C_3^{-1}>0,\quad1\leq j\leq m,\quad\text{on}\;M.
\end{equation*}
Similar arguments as in Subcase 1.2 yield that $|\partial u|_g\leq C$ provided that $A$ (resp. $\varepsilon$)  is chosen sufficiently large (resp. small).
\end{proof}
\section{Examples on Compact Hermitian Manifolds without Boundary}\label{secwubian}
In this section, we study Equation \eqref{cma} on compact Hermitian manifolds without boundary and mainly prove Theorem \ref{thmkellhessianquotient}.
\begin{proof}
[Proof of Theorem \ref{thmkellhessianquotient}]
We rewrite the $(k,\ell)$-Hessian quotient equation \eqref{hessianquotient}  and the $(k,\ell)$-$(m-1,m-1)$-Hessian quotient equation \eqref{tklcma} ($1\leq \ell<k\leq m$) as
\begin{equation}
\label{sec7kellhessianquotient}
f(\lambda)
=-\frac{\vartheta_{u}^\ell\wedge\omega^k}
{\vartheta_{u}^k\wedge\omega^k}
=-\frac{\binom{m}{\ell}^{-1}\sigma_\ell(\lambda)}{\binom{m}{k}^{-1}\sigma_k(\lambda)}
\end{equation}
and
\begin{equation}
\label{sec7kellm-1hessionquotient}
f(\lambda)
=-\frac{\left(P_\omega(\vartheta_u)\right)^\ell\wedge\omega^k}
{\left(P_\omega(\vartheta_u)\right)^k\wedge\omega^k}
=-\frac{\binom{m}{\ell}^{-1}\sigma_\ell(T(\lambda))}{\binom{m}{k}^{-1}\sigma_k(T(\lambda))} \end{equation}
where for simplicity we assume that $\chi$ in \eqref{sec7kellhessianquotient} and $P_\omega(\chi)$ in \eqref{sec7kellm-1hessionquotient} are $k$-positive forms. It follows that $g=\left(\frac{\sigma_k(\lambda)}{\sigma_\ell(\lambda)}\right)^{1/(k-\ell)}$ satisfies our structural assumption (i.e., Assumption \eqref{assum1}, Assumption \eqref{assum2} and Assumption \eqref{assum3} in Section \ref{secintro}) by \cite{spruck}. So does $g=\left(\frac{\sigma_k(T(\lambda))}{\sigma_\ell(T(\lambda))}\right)^{1/(k-\ell)}$ by the arguments in \cite{stw1503} (see also Section \ref{secintro}). Then our $f$ in \eqref{sec7kellhessianquotient} and  \eqref{sec7kellm-1hessionquotient} also satisfies our structural assumption since they are in the form of $f=-g^{-(k-\ell)}.$
Hence we will deal with \eqref{sec7kellhessianquotient} and  \eqref{sec7kellm-1hessionquotient} uniformly.
\subsection*{The Method of Continuity} We set
$$
h_0:=\left\{
       \begin{array}{ll}
         \frac{\vartheta_{0}^\ell\wedge\omega^k}
{\vartheta_{0}^k\wedge\omega^k}, & \mbox{for \eqref{sec7kellhessianquotient}},  \\
         \frac{\left(P_\omega(\vartheta_0)\right)^\ell\wedge\omega^k}
{\left(P_\omega(\vartheta_0)\right)^k\wedge\omega^k}, & \mbox{for \eqref{sec7kellm-1hessionquotient}}.\\
       \end{array}
      \right.
$$
We study a family of equations for $(u_t,b_t)\in C^{2,\alpha}(M,\mathbb{R})\times\mathbb{R}$
\begin{equation}
\label{sec7gutt}
G(u_t,b_t):= F(\vartheta_{u_t})+h_0^{1-t}h^te^{b_t}=0,\quad\lambda((\vartheta_{u_t})^\flat)\in\Gamma,\quad \sup_M u_t=0,\quad t\in[0,1].
\end{equation}
We consider
\begin{equation*}
\mathscr{T}:=\Big\{t\in [0,1]:\,\mbox{there exists}\, (u_t,b_t)\in C^{2,\alpha}(M,\mathbb{R})\times\mathbb{R}\,\mbox{solves \eqref{sec7gutt}}  \Big\}.
\end{equation*}
Note that $0\in \mathscr{T}.$ We wish to show that $\mathscr{T}$ is open. Assume that $\hat{t}\in\mathscr{T}.$  It suffices to show that, for some small $\varepsilon>0,$ there exists $v_t\in C^{2,\alpha}(M,\mathbb{R})$ for $[\hat t,\hat t+\varepsilon]$ with $v_{\hat t}=0$ and
$$
\frac{F(\hat{\chi}_{v_t})}{F(\vartheta_{u_{\hat t}})}-h_0^{\hat t-t}h^{t-\hat{t}}e^{b_t-b_{\hat{t}}} =0,\quad \lambda((\hat{\chi}_{v_t})^\flat)\in\Gamma,
$$
where
$$
\hat{\chi}_{v_t}:=\vartheta_{u_{\hat t}}+\sqrt{-1}\partial\bar\partial v_t+W(\mathrm{d}v_t).
$$
Indeed, if we can find such a $v_t$ then $u_t:=u_{\hat{t}}+v_t$ solves \eqref{sec7gutt}, and up to adding a time-dependent constant we can also arrange that $\sup_M u_t=0,$ as desired. We define
$$
\hat\omega=-\frac{\sqrt{-1}}{F(\vartheta_{u_{\hat t}})}\sum_{i,j=1}^mF_{i\bar j}(\vartheta_{u_{\hat t}})\mathrm{d}z_i\wedge\mathrm{d}\bar z_j,
$$
where $(F_{i\bar j}(\vartheta_{u_{\hat t}}))$ is the transport of the inverse of $(F^{i\bar j}(\vartheta_{u_{\hat t}})),$ i.e.,
$$\sum_{q=1}^mF_{i\bar q}(\vartheta_{u_{\hat t}})F^{j\bar q}(\vartheta_{u_{\hat t}})=\delta_{ij},\quad 1\leq i,\,j\leq m.$$
We consider the linear differential operator
$$
A(v)=-\frac{1}{F(\vartheta_{u_{\hat t}})}\sum_{i,j=1}^mF^{i\bar j}(\vartheta_{u_{\hat t}})(\partial_{\bar j}\partial_iv+W_{i\bar j}(\mathrm{d}v)),\quad \forall\,v\in C^2(M,\mathbb{R}).
$$
It is elliptic and its kernel are the constant functions. Denote by $A^*$ the adjoint of $A$ with respect to the $L^2$ inner product with volume form $\hat\omega^m.$
We need argue as  \cite{gauduchon1}(cf. \cite[Theorem 2.2]{ctwjems}). The index of $L$ is zero and hence the kernel of $A^*$ is one dimensional which is spanned by a smooth function $\sigma'$. The maximum principle yields that teach nonzero function in  the image of $A$ must change sign. Since $\sigma'$ is orthogonal to the image of $A,$ it follows that $\sigma'$ must have constant sign. We assume that $f\geq 0.$ The strong maximum principle yields that $\sigma'>0,$ and hence we can write $\sigma'=e^\sigma$ for some $\sigma\in C^\infty(M,\mathbb{R}).$ By adding a constant to $\sigma,$ we may assume that
\begin{equation}
\label{sigmaequ2}
\int_Me^{\sigma}\hat\omega^m=1.
\end{equation}
We will show that we can find $v_t\in C^{2,\alpha}(M,\mathbb{R})$ for $[\hat t,\hat t+\varepsilon)$ such that
\begin{equation*}
\frac{F(\hat{\chi}_{v_t})}{F(\vartheta_{u_{\hat{t}}})}=  \left(\int_M\frac{F(\hat{\chi}_{v_t})}{F(\vartheta_{u_{\hat{t}}})}e^{\sigma}  \hat\omega^m\right)
h_0^{\hat t-t}h^{t-\hat{t}}e^{c_t}
\end{equation*}
where $c_t$ is the normalization constant given by
$$
\int_Mh_0^{\hat t-t}h^{t-\hat{t}}e^{c_t}\hat\omega^m=1.
$$
We set $B_1$ and $B_2$ given by
\begin{align*}
B_1:=&\left\{v\in C^{2,\alpha}(M,\mathbb{R}):\,\int_Mve^{\sigma}\hat{\omega}^m=0,\quad\lambda((\hat{\chi}_{v})^\flat)\in\Gamma\right\},\\
B_2:=&\left\{w\in C^{\alpha}(M,\mathbb{R}):\,\int_Me^we^{\sigma}\hat{\omega}^m=1 \right\}.
\end{align*}
We define $\Psi:\,B_1\to B_2$ by
\begin{equation*}
\Psi(v):=\log \frac{F(\hat{\chi}_{v })}{F(\vartheta_{u_{\hat{t}}})}-\log\left(\int_M\frac{F(\hat{\chi}_{v })}{F(\vartheta_{u_{\hat{t}}})}e^{\sigma}  \hat\omega^m\right).
\end{equation*}
Our goal is to find $v_t$ solving $\Psi(v_t)=(t-\hat{t})(h-h_0)+c_t$ for $t\in[\hat{t},\hat{t}+\varepsilon).$ Note that $\Psi(0)=0 $ by \eqref{sigmaequ2}. It follows from the Inverse Function Theorem that it suffices to show the invertibility of
$$
(D\Psi)_0:\,T_0B_1\to T_0B_2,
$$
where
$$
T_0B_1:=\left\{\zeta\in C^{2,\alpha}(M,\mathbb{R}):\,\int_M\zeta e^{\sigma}\hat\omega^m=0\right\}
$$
and
$$
T_0B_2:=\left\{\rho\in C^{\alpha}(M,\mathbb{R}):\,\int_M\rho e^{\sigma}\hat\omega^m=0\right\}
$$
denote the tangent spaces to $B_1$ and $B_2$ at $0.$ Note that $T_0B_2$ consists of $C^\alpha$ functions orthogonal to the kernel of $L^*.$  A direct calculation yields that
\begin{align*}
(D\Psi)_0(\zeta)=&-\frac{1}{F(\vartheta_{u_{\hat t}})}F^{i\bar j}(\vartheta_{u_{\hat t}})(\partial_{\bar j}\partial_i\zeta+W_{i\bar j}(\mathrm{d}\zeta))-\int_M
\frac{ e^{\sigma}}{F(\vartheta_{u_{\hat t}})} F^{i\bar j}(\vartheta_{u_{\hat t}})(\partial_{\bar j}\partial_i\zeta+W_{i\bar j}(\mathrm{d}\zeta))\hat\omega^m \\
=& A(\zeta)+\int_M  e^\sigma A(\zeta) \hat\omega^m
= A(\zeta)+\int_M  A^*(e^\sigma)\zeta\hat\omega^m
= A(\zeta).
\end{align*}
It follows from the Fredholm alternative that $A$ and hence $(D\Psi)_0$ is an isomorphism, as required.
Assumption \eqref{assum1} of $f$ yields that
at the point $\mathbf{x}_{\mathrm{max}}$ where $u_t$ attains its maximum, there holds
\begin{equation}
\label{btlowerbd}
e^{b_t}\geq \left(\frac{h_0}{h}\right)^t(\mathbf{x}_{\mathrm{max}}).
\end{equation}
and at the point $\mathbf{x}_{\mathrm{min}}$ where $u_t$ attains its minimum, there holds
\begin{equation*}
e^{b_t}\leq \left(\frac{h_0}{h}\right)^t(\mathbf{x}_{\mathrm{min}}).
\end{equation*}
Hence there exists a uniform constant $C>0$ such that
\begin{equation}
\label{btbd}
|b_t|\leq C.
\end{equation}

\subsection*{A Priori Estimates} Given the openness of $\mathscr{T}$ and \eqref{btbd}, it suffices to deduce a priori estimates for \eqref{sec7gutt} to solve \eqref{sec7kellhessianquotient} and \eqref{sec7kellm-1hessionquotient}. For this aim, we need find $\mathcal{C}$-subsolutions of \eqref{sec7gutt} for all $t\in [0,1]$.
\begin{prop}
\label{propkellhessianquotient}
Let $(M,J,g)$ be a compact Hermitian manifold without boundary and $\dim_{\mathbb{C}}M=m,$ where $g$ is the Hermitian metric with respect to the complex structure $J.$ Suppose that $\chi$ is a $k$-positive form  with
\begin{align}
\label{kellconhermitian1}
 &h_0\geq h,\quad\mbox{on}\quad M,\\
 \label{kellconhermitian2}
 &k h  \chi^{k-1}\wedge\omega^{m-k}- \ell\chi^{\ell-1}\wedge\omega^{m-\ell}>0
\end{align}
for $(m-1,m-1)$ form.
The function $\underline{u}=0$ is a $\mathcal{C}$-subsolution of \eqref{sec7gutt} of  the $(k,\ell)$-Hessian quotient equation \eqref{sec7kellhessianquotient} for all $t\in[0,1].$

Suppose that $\chi$ is a real $(1,1)$ form such that $P_\omega(\chi)$ is a $k$-positive form  with
\begin{align}
\label{kellm-1conhermitian1}
 &h_0\geq h,\quad\mbox{on}\quad M,\\
 \label{kellm-1conhermitian2}
 &k h  P_\omega(\chi)^{k-1}\wedge\omega^{m-k}- \ell P_\omega(\chi)^{\ell-1}\wedge\omega^{m-\ell}>0
\end{align}
for $(m-1,m-1)$ form.
The function $\underline{u}=0$ is a $\mathcal{C}$-subsolution of \eqref{sec7gutt} of  the $(k,\ell)$-$(m-1,m-1)$-Hessian quotient equation \eqref{sec7kellhessianquotient}  for all $t\in[0,1].$
\end{prop}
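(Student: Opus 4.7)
The plan is to verify Definition \ref{csubsol} for $\underline{u}=0$ directly, using the standard characterization of $\mathcal{C}$-subsolutions of (generalized) Hessian quotient equations in terms of positivity of certain $(m-1,m-1)$-forms (see Sz\'ekelyhidi \cite{gaborjdg}, also Guan \cite{guan2014}). The two assertions will be proved in parallel, with $\chi$ replaced by $P_\omega(\chi)$ in the second case.

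Fix $t\in[0,1]$ and a point $\mathbf{p}\in M$; the right-hand side of \eqref{sec7gutt} at $\mathbf{p}$ is $\sigma:=-h_0^{1-t}h^te^{b_t}(\mathbf{p})$. For the Hessian quotient $f(\lambda)=-\binom{m}{k}\sigma_\ell(\lambda)/[\binom{m}{\ell}\sigma_k(\lambda)]$, a direct computation of
$$\lim_{s\to+\infty}f\bigl(\lambda(\chi^\flat)+s\mathbf{e}_i\bigr)=-\frac{\binom{m}{k}}{\binom{m}{\ell}}\cdot\frac{\sigma_{\ell-1;i}(\lambda(\chi^\flat))}{\sigma_{k-1;i}(\lambda(\chi^\flat))}$$
and pairing $\chi^{k-1}\wedge\omega^{m-k}$ against the basic $(1,1)$-forms $\sqrt{-1}\,\theta^{i}\wedge\bar\theta^{i}$ identify the boundedness of $(\lambda(\chi^\flat)+\Gamma_m)\cap\partial\Gamma^{\sigma}$ with the pointwise positivity of the $(m-1,m-1)$-form
$$k\,h_0^{1-t}h^te^{b_t}\,\chi^{k-1}\wedge\omega^{m-k}-\ell\,\chi^{\ell-1}\wedge\omega^{m-\ell}.$$
Thus the proposition reduces to showing this positivity for all $t\in[0,1]$.

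I would obtain it by a maximum principle argument for the coefficient $b_t$. At a point $\mathbf{x}_{\mathrm{max}}$ where $u_t$ attains its maximum, $\mathrm{d}u_t=0$ and $\sqrt{-1}\partial\bar\partial u_t\leq 0$, so $\vartheta_{u_t}(\mathbf{x}_{\mathrm{max}})\leq\chi(\mathbf{x}_{\mathrm{max}})$; the monotonicity $f_i>0$ of Assumption \eqref{assum1} then gives $F(\vartheta_{u_t})(\mathbf{x}_{\mathrm{max}})\leq F(\chi)(\mathbf{x}_{\mathrm{max}})=-h_0(\mathbf{x}_{\mathrm{max}})$, which is \eqref{btlowerbd}, i.e.\ $e^{b_t}\geq (h_0/h)^t(\mathbf{x}_{\mathrm{max}})$. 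Since $h_0\geq h$ by \eqref{kellconhermitian1} and $t\in[0,1]$, this yields $e^{b_t}\geq 1$, whence pointwise
$$h_0^{1-t}h^te^{b_t}\;\geq\;h_0^{1-t}h^t\;\geq\;h.$$
As $\chi$ is $k$-positive, $\chi^{k-1}\wedge\omega^{m-k}$ is a positive $(m-1,m-1)$-form, so $kh_0^{1-t}h^te^{b_t}\chi^{k-1}\wedge\omega^{m-k}\geq kh\,\chi^{k-1}\wedge\omega^{m-k}>\ell\,\chi^{\ell-1}\wedge\omega^{m-\ell}$ by \eqref{kellconhermitian2}, proving the first assertion.

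For the second assertion, the same scheme applies after composing with the bijection $T$ of \eqref{stwacta2017equ2.8}. Writing $f(\lambda)=\tilde f(T(\lambda))$ and using $T(\Gamma_m)\subset\Gamma_m$ together with the linearity of $T$, boundedness of $(\lambda(\chi^\flat)+\Gamma_m)\cap\partial\Gamma^{\sigma}$ is equivalent to boundedness of $(T(\lambda(\chi^\flat))+\Gamma_m)\cap\partial\tilde\Gamma^{\sigma}$, which under the same pairing computation becomes the pointwise positivity of $k|\sigma|P_\omega(\chi)^{k-1}\wedge\omega^{m-k}-\ell P_\omega(\chi)^{\ell-1}\wedge\omega^{m-\ell}$. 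The maximum principle step is identical, using now $k$-positivity of $P_\omega(\chi)$ from the hypothesis. The main technical point of the whole proof is the characterization of the $\mathcal C$-subsolution condition in terms of $(m-1,m-1)$-form positivity, but this is standard for the symmetric functions at hand; every other step is a straightforward maximum principle together with a telescoping use of $h_0\geq h$.
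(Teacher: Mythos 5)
Your proposal follows the same path as the paper's proof: reduce the $\mathcal{C}$-subsolution condition for $\underline{u}=0$ to the pointwise positivity of the $(m-1,m-1)$-form $k(h_0^{1-t}h^te^{b_t})\chi^{k-1}\wedge\omega^{m-k}-\ell\chi^{\ell-1}\wedge\omega^{m-\ell}$ (via the standard limit computation $\lim_{s\to\infty}f(\lambda(\chi^\flat)+s\mathbf{e}_i)$, which the paper attributes to \cite[Proposition 22]{gaborjdg}), then use the maximum-principle bound $e^{b_t}\geq(h_0/h)^t(\mathbf{x}_{\mathrm{max}})\geq1$ together with $h_0\geq h$ and the hypothesis \eqref{kellconhermitian2} (resp. \eqref{kellm-1conhermitian2}) to conclude. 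Both the paper and you re-derive the bound on $e^{b_t}$ that the paper had already recorded as \eqref{btlowerbd}. One small caution on the second assertion: since $T(\Gamma_m)\subsetneq\Gamma_m$, your claim that boundedness of $(\lambda(\chi^\flat)+\Gamma_m)\cap\partial\Gamma^\sigma$ is literally equivalent to boundedness of $(T(\lambda(\chi^\flat))+\Gamma_m)\cap\partial\tilde\Gamma^\sigma$ is not exactly right as stated — one should work with $T(\Gamma_m)$ on the right, or equivalently compute $\lim_{s\to\infty}\tilde f\bigl(T(\lambda(\chi^\flat))+\tfrac{s}{m-1}(\mathbf{1}-\mathbf{e}_i)\bigr)$ directly — but this does not affect the final form condition or the maximum-principle step, and the paper itself dispatches the second part with ``follows similarly.''
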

\begin{proof}[Proof of Proposition \ref{propkellhessianquotient}]
We follow the arguments of \cite[Proof of Proposition 22]{gaborjdg}. For the first conclusion, we just need to check that if $\mu'$ denotes any $(m-1)$-tuple from $\lambda(\chi^\flat),$ then
\begin{equation*}
-\frac{\binom{m}{\ell}^{-1}\sigma_{\ell-1}(\mu')}{\binom{m}{k}^{-1}\sigma_{k-1}(\mu')}
\geq-h_0^{1-t}h^te^{b_t},
\end{equation*}
which is equivalent to
\begin{equation}
\label{equivalentform}
k(h_0^{1-t}h^te^{b_t})\chi^{k-1}\wedge\omega^{m-k}-\ell\chi^{\ell-1}\wedge\omega^{m-\ell}>0
\end{equation}
for $(m-1,m-1)$ form.
Note that $G(u_t,b_t)$ satisfies our structural assumption (i.e., Assumption \eqref{assum1}, Assumption \eqref{assum2} and Assumption \eqref{assum3} in Section \ref{secintro}). It follows from \eqref{btlowerbd} and \eqref{kellconhermitian1} that
\begin{equation*}
  e^{b_t}\geq \left(\frac{h_0}{h}\right)^t(\mathbf{x}_{\mathrm{max}})\geq 1,
\end{equation*}
which, together with  \eqref{kellconhermitian2}, yields \eqref{equivalentform}, as desired.

The second conclusion follows similarly. This completes the proof of Proposition \ref{propkellhessianquotient}
\end{proof}
\subsubsection*{Zero Order Estimates}
We need prove
\begin{equation}
\label{thmzeroorderwubian}
\sup_M|u|\leq C.
\end{equation}
We modify the arguments in \cite{twjams,gaborjdg}. We define a second order elliptic operator $B$ by
$$
B(v):=\frac{m(\sqrt{-1}\partial\bar\partial v+W(\mathrm{d}v))\wedge\omega^{m-1}}{\omega^m},\quad \forall\,v\in C^2(M,\mathbb{R}).
$$
It is elliptic and its kernel are the constant functions. Denote by $B^*$ the adjoint of $B$ with respect to the $L^2$ inner product with volume form $\omega^m.$ It follows from the argument for $A$ to get $\sigma$ above that there exists a function $\tau\in C^\infty(M,\mathbb{R})$ such that $\ker B^*=\{r e^\tau:\,\forall\,r\in\mathbb{R}\}$ and
\begin{equation}
\label{regutau}
\int_Me^\tau\omega^m=1.
\end{equation}
Standard PDE theory (see, e.g., \cite[Appendix A]{as2}) yields that there exists a Green function $G$ for $B$ which satisfies
\begin{align}
\label{gxylowerbd}
&G(\mathbf{x},\mathbf{y})\geq -C,\\
\label{gxyl1bd}
&\|G(\mathbf{x},\cdot)\|_{L^1(M,e^\tau\omega^m)}\leq C,\quad \forall\,\mathbf{x}\in M,
\end{align} for a uniform constant $C>0,$
and
\begin{equation}
\label{greenformula}
u(\mathbf{x})=\int_Mu(\mathbf{y})e^{\tau(\mathbf{y})}\omega^{m}(\mathbf{y})
+\int_MG(\mathbf{x},\mathbf{y})(Bu)(\mathbf{y})e^{\tau(\mathbf{y})}\omega^m(\mathbf{y}),
\end{equation}
where we use \eqref{regutau}.
Since $e^\tau\in \ker B^*,$ we have
\begin{equation}
\label{buetauomegam=0}
\int_M(Bu)(\mathbf{y})e^{\tau(\mathbf{y})}\omega^m(\mathbf{y})
=\int_Mu (\mathbf{y})(B^*(e^{\tau}))(\mathbf{y}) \omega^m(\mathbf{y})=0.
\end{equation}
It follows from \eqref{gxylowerbd}, \eqref{greenformula} and \eqref{buetauomegam=0} that we can assume that \begin{equation}
\label{gxynonnegative}
G(x,y)\geq 0.
\end{equation}
Since $\sum_{i=1}^m\lambda((\vartheta_u)^\flat)>0,$ we have
\begin{equation}
\label{bulowerbd}
B(u)\geq -C
\end{equation}
for some uniform constant $C>0.$

Assume that $u(\mathbf{p})=0=\sup_Mu.$ We can deduce from \eqref{gxyl1bd}, \eqref{greenformula}, \eqref{gxynonnegative} and \eqref{bulowerbd} that
\begin{align*}
\int_{M}(-u)e^{\tau(\mathbf{y})}\omega^m(\mathbf{y})
=&u(\mathbf{p})-\int_MG(\mathbf{p},\mathbf{y})(Bu)(\mathbf{y})
e^{\tau(\mathbf{y})}\omega^m(\mathbf{y})\\
\leq&C\int_MG(\mathbf{p},\mathbf{y}) e^{\tau(\mathbf{y})}\omega^m(\mathbf{y})
\leq C,
\end{align*}
which, together with \eqref{regutau}, yields that
\begin{equation}
\label{ul1bd}
\int_M(-u)\omega^m\leq C.
\end{equation}
It suffices to obtain the lower bound of $L:=\inf_Mu<0$ to get the zero order estimate. Since $\underline{u}=0$ is the  $\mathcal{C}$-subsolution, it follows from Definition \ref{csubsol} that the set
\begin{equation*}
\left(\lambda\left(\vartheta_{0}^\flat\right)+\Gamma_m\right) \cap \Gamma^{h(\mathbf{x})},\quad
\forall\;\mathbf{x}\in M
\end{equation*}
is uniformly bounded. There exist $\delta>0$ and $R>0$ such that there holds
\begin{equation}
\label{eigenbd}
\left(\lambda\left(\vartheta_{0}^\flat\right)-\delta\mathbf{1}+\Gamma_m \right)\cap \Gamma^{h(\mathbf{x})}\subset B_R(\mathbf{0}),\quad\forall\;\mathbf{x}\in M.
\end{equation}
We assume that $u$ attains its minimum at the origin of the local coordinate  chart $ B_2(\mathbf{0})$; otherwise we can get this by a biholomorphic map. Let us work in $B_1(\mathbf{0})$. For $\epsilon>0$ sufficiently small, we set $v=u+\epsilon|\mathbf{z}|^2$. We have
\begin{equation*}
\inf_{|\mathbf{w}|\leq 1}v(\mathbf{w})=v(\mathbf{0})= L,\quad v (\mathbf{z})\geq L+\epsilon,\quad \forall\;\mathbf{z}\in \partial B_1(\mathbf{0}).
\end{equation*}
It follows from \cite[Proposition 11]{gaborjdg} that
\begin{equation}
\label{plebesgue}
c_0\epsilon^{2n}\leq \int_P\det(D^2v),
\end{equation}
where the integration is respect to the Lebesgue measure, and the set $P$ is given by
\begin{equation}
 \label{setp}
 P:=\left\{\mathbf{x}\in B_1(\mathbf{0}):|Dv(\mathbf{x})|\leq \frac{\epsilon}{2},\quad v(\mathbf{y})\geq v(\mathbf{x})+Dv(\mathbf{x})\cdot(\mathbf{y}-\mathbf{x}),\quad\forall\, \mathbf{y}\in B_1(\mathbf{0})\right\}.
 \end{equation}
For any $\mathbf{x}\in P$,  we have $|Dv(\mathbf{x})|\leq \frac{\epsilon}{2}$ and $D^2v(\mathbf{x})\geq 0$ which shows $\partial_i\partial_{\overline{j}}u(\mathbf{x})\geq -\epsilon \delta_{ij}$ and that
 \begin{equation}
   \label{blockiinequality}
 \det(D^2v)\leq 2^{2n}\left(\det(\partial_i\partial_{\overline{j}}v)\right)^2
 \end{equation}
 from the argument in \cite{blockiscience}. We choose $\epsilon$ sufficiently small depending only on $\delta$ and $\omega$ such that
 \begin{equation}
 \label{eigenbd1}
 \lambda\left(\vartheta_{u}^\flat\right)\in
 \lambda\left(\vartheta_{0}^\flat\right)-\delta\mathbf{1}+\Gamma_m,\quad \forall\;\mathbf{x}\in P.
 \end{equation}
 On the other hand, since $u$ is a solution to \eqref{sec7gutt}, we have
 \begin{equation}
 \label{eigenbd2}
 \lambda\left(\vartheta_{u}^\flat\right)\in\partial \Gamma^{h(\mathbf{x})},\quad \forall\;\mathbf{x}\in P.
 \end{equation}
 From \eqref{eigenbd}, \eqref{eigenbd1} and \eqref{eigenbd2}, we deduce that
 $|u_{i\bar j}|$ and hence $|v_{i\bar j}|$ is bounded from above at any point $\mathbf{x}\in P$.
 This, together with \eqref{plebesgue} and \eqref{blockiinequality}, yields that
 \begin{equation}
 \label{volpomegalower}
 c_0\epsilon^{2n}\leq C'\mathrm{Vol}_{\omega}(P).
 \end{equation}
 From \eqref{setp}, we have
 \begin{equation*}
 v(\mathbf{x})<L+\epsilon/2<0,
 \end{equation*}
 where without loss of generality we assume that $L\ll-1$, from which we have
 \begin{equation}
 \label{volpomegaupper}
 \mathrm{Vol}_{\omega }(P)\leq C''\frac{\int_M(-v) \omega^m}{|L+\epsilon/2|}.
 \end{equation}
 Thanks to \eqref{ul1bd}, \eqref{volpomegalower} and \eqref{volpomegaupper}, we get that $L$ is uniformly bounded from below, as required.

\subsubsection*{Second Order Estimates}
We need prove
\begin{equation}
\label{thmsecondorderwubian}
\sup_M|\sqrt{-1}\partial\bar\partial u|\leq C\left(1+\sup_M|\partial u|_g^2\right).
\end{equation}
See Theorem \ref{thm2orderin}.

\subsubsection*{First Order Estimates}
We need prove
\begin{equation}
\label{thmfirstorderwubian}
\sup_M|\partial u|_g\leq C.
\end{equation}
 We use the blowup argument in \cite{gaborjdg,twjams} originated from \cite{dkajm}.
Assume for a contradiction that \eqref{thmfirstorderwubian} does not hold. There exist a sequence of basic real $(1,1)$ form $\chi_{j}$ and basic smooth functions $h_j$ and $u_j$ such that
\begin{align}
&\|\chi_{j}\|_{C^2(M,g)}+\|h_{j}\|_{C^2(M,g)}\leq C\nonumber\\
\label{1ordercon}&\left[\inf_{(\mathbf{x},j)\in M\times\mathbb{N}^*}h_j,\sup_{(\mathbf{x},j)\in M\times\mathbb{N}^*}h_j\right]\subset\left(\sup_{\partial \Gamma}f,\sup_\Gamma f\right),\\
\label{fequj}
F(\vartheta_{j})=&h_j,\quad \text{with}\quad C_j:=\sup_M|\partial u_j|_{g}\to+\infty,\quad \text{as}\quad j\to +\infty,
\end{align}
where \begin{equation*}
\sup_Mu_j=0,\quad\vartheta_j=\chi_j+\sqrt{-1}\partial\bar\partial u_j+W(\mathrm{d}u_j).
\end{equation*}
We also assume that $0$ is a  $\mathcal{C}$-subsolution of  equations given by \eqref{fequj}.

From  \eqref{thmzeroorderwubian}, it follows that $\sup_M|u_j|\leq C$. For each $j$, there exists $\mathbf{p}_j\in M$ such that $\sup_M|\partial u|_g=|\partial u_j|(\mathbf{p}_j)=C_j\to+\infty$ as $j\to +\infty$.
Without loss of generality, we assume that $\lim_{j\to \infty}\mathbf{p}_j=\mathbf{p}_0\in M$ and that $\mathbf{p}_0$ is the center of the local coordinate chart $ B_{2}(0)\subset \mathbb{R}\times \mathbb{C}^n$ with all the points $\mathbf{p}_j\in   B_{1}(\mathbf{0})$.  Now we just need consider $\omega ,\,\chi_{ j},\,u_j,\,h_j,$ as quantities on $B_2(\mathbf{0})$. We also assume that $\mathbf{z}=(z_1,\cdots,z_n)$ is the coordinates on $\mathbb{C}^n$ and that $\omega(\mathbf{0})=\gamma$, where $\gamma$ is the standard Hermitian metric on $\mathbb{C}^n$.

We define
\begin{equation*}
\hat u_j(\mathbf{z}):=u_j(\mathbf{z}(\mathbf{p}_j)+\mathbf{z}/C_j),\quad \text{on}\quad B_{C_j}(\mathbf{0}),
\end{equation*}
which satisfies
\begin{equation*}
\sup_{B_{C_j}(\mathbf{0})}|\hat u_j|\leq C,\quad \text{and}\quad \sup_{B_{C_j}(\mathbf{0})}|\partial \hat u_j|\leq C,
\end{equation*}
where the gradient  and norm are the Euclidean ones. Moreover, from the definition of these functions, it follows that
\begin{equation*}
 \partial_k \hat u_j (\mathbf{0})=C_j^{-1} \partial_k u_j(\mathbf{z}(\mathbf{p}_j)) ,\quad k=1,\cdots,n,
\end{equation*}
which yields that
\begin{equation*}
C\geq |\partial \hat u_j| (\mathbf{0})>c>0.
\end{equation*}
From \eqref{thmsecondorderwubian}, it follows that
\begin{equation}
\sup_{B_{C_j}(\mathbf{0})}|\sqrt{-1}\partial \overline{\partial} \hat u_{j}|_{\gamma}\leq CC_j^{-2}\sup_M|\sqrt{-1}\partial \overline{\partial} u_j|_{g}\leq C.
\end{equation}
Thanks to the elliptic estimates for $\Delta_{\gamma}$ and the Sobolev embedding, we see that for each given compact set $K\subset \mathbb{C}^n$, each $\alpha\in (0,1)$ and $p>1$, there exists a constant $C$ such that
\begin{equation*}
\|\hat u_{j}\|_{C^{1,\alpha}(K)}+\|\hat u_{j}\|_{W^{2,p}(K)}\leq C.
\end{equation*}
This yields that there exists a subsequence of $\hat u_j$ that converges strongly in $C_{\mathrm{loc}}^{1,\alpha}(\mathbb{C}^n)$ as well as weakly in $W^{2,p}_{\mathrm{loc}}(\mathbb{C}^n)$ to a function $u\in C_{\mathrm{loc}}^{1,\alpha}(\mathbb{C}^n)\cap W^{2,p}_{\mathrm{loc}}(\mathbb{C}^n)$ with $\sup_{\mathbb{C}^n}(|u|+|\nabla u|)\leq C$ and $|\nabla u|(\mathbf{0})\geq c>0$. In particular, $u$ is non-constant.

We set
\begin{equation*}
\Phi_j:\,\mathbb{C}^n\to\mathbb{C}^n,\quad \mathbf{z}\mapsto C_j^{-1}\mathbf{z}+\mathbf{x}_j,\quad \mathbf{x}_j:=\mathbf{z}(\mathbf{p}_j).
\end{equation*}
Then we have
\begin{align*}
\hat u_j=&u_j\circ\Phi_j,\quad\text{on}\quad B_{C_j}(\mathbf{0}),\\
\gamma_j:=&C_j^2\Phi_j^*\omega \to \gamma,\quad\text{smoothly on compact set of $\mathbb{C}^n$ as}\quad j\to \infty.
\end{align*}
In particular, $\Phi_j^*\omega  \to 0$ smoothly. Similarly, $\beta_j:=\Phi_j^*\chi_{j}\to 0$ and $\hat W(\mathrm{d}\hat u_j)=\Phi_j^*(W(\mathrm{d}u_j))\to 0$ smoothly. This also shows that
\begin{equation}
\label{eigenvaluematrix}
  \left|\lambda((\gamma_j)^{\bar q s}\left((\beta_j)_{r\overline{q}}+(\hat u_j)_{r\overline{q}}+\hat W_{r\bar q}\right))-\lambda((\hat u_j)_{r\overline{s}})\right|\to 0,\quad\text{smoothly}.
\end{equation}
We rewrite \eqref{fequj} as
\begin{equation}
\label{fequj2}
F\left(C_j^{2}(\gamma_j)^{\bar q s}\left((\beta_j)_{r\overline{q}}+(\hat u_j)_{r\overline{q}}+\hat W_{r\bar q}(\mathrm{d}\hat u_j)\right)\right)=h_j.
\end{equation}
We claim that $u$ is a $\Gamma$-solution (see \cite[Definition 15]{gaborjdg}).
Indeed, we first suppose that there exists a $C^2$ function $v$ such that $v\geq u$ and $v(\mathbf{z}_0)=u(\mathbf{z}_0)$ for some point $\mathbf{z}_0$. By the construction of $u$, for any $\epsilon>0$, there exists a large $N\in\mathbb{N}$ such that if $j\geq N$, then there exist $a_j,\,\mathbf{z}_j$ with $|a_j|<\epsilon,\,|\mathbf{z}_j-\mathbf{z}_0|<\epsilon$ such that
\begin{equation*}
v+\epsilon|\mathbf{z}-\mathbf{z}_0|+a_j\geq \hat u_j,\quad\text{on}\;B_1(\mathbf{z}_0),\;\text{with equality at $\mathbf{z}_j$},
\end{equation*}
and that $\lambda( (\hat u_j)_{k\overline{\ell}})$ lies in the $2\epsilon$ neighborhood of $\Gamma\supset\Gamma_n$ by \eqref{eigenvaluematrix} and \eqref{fequj2}.
This means that $v_{k\overline{\ell}}(\mathbf{z}_j)+\epsilon\delta_{k\ell}\geq (\hat u_j)_{k\overline{\ell}}(\mathbf{z}_j)$ and hence  $v_{k\overline{\ell}}(\mathbf{z}_j)+\epsilon\delta_{k\ell}$ lies in the $2\epsilon$ neighborhood of $\Gamma$, from which we can deduce that $\lambda((v_{k\overline{\ell}}(\mathbf{z}_0)))\in\overline{\Gamma}$ by letting $\epsilon\to0$.

We second suppose that $v$ is a $C^2$ function such that $v\leq u$ and $v(\mathbf{z}_0)=u(\mathbf{z}_0)$. As above, for any $\epsilon>0$, there exists $N_1\in\mathbb{N}$ sufficiently large such that for any $j>N_1$, we can find $a_j,\,\mathbf{z}_j$ with $|a_j|<\epsilon,\,|\mathbf{z}_j-\mathbf{z}_0|<\epsilon$ satisfying
\begin{equation*}
v-\epsilon|\mathbf{z}-\mathbf{z}_0|+a_j\leq \hat u_j,\quad\text{on}\;B_1(\mathbf{z}_0),\;\text{with equality at $\mathbf{z}_j$},
\end{equation*}
This yields that $\left(v_{k\overline{\ell}}(\mathbf{z}_j)-\epsilon\delta_{k\ell}\right)\leq \left((u_j)_{k\overline{\ell}}(\mathbf{z}_j)\right)$. If $\lambda\left(\left(v_{k\overline{\ell}}(\mathbf{z}_j)-3\epsilon\delta_{k\ell}\right)\right)\in \Gamma$, then
$\lambda\left(\left((u_j)_{k\overline{\ell}}(\mathbf{z}_j)\right)\right)\in \Gamma+2\epsilon\mathbf{1}$.

From \eqref{eigenvaluematrix}, it follows that
\begin{equation}
\lambda\left((\gamma_j)^{\bar q s}\left((\beta_j)_{r\overline{q}}+(\hat u_j)_{r\overline{q}}+\hat W_{r\bar q}(\mathrm{d}\hat u_j)\right)\right)\in \Gamma+\epsilon\mathbf{1},
\end{equation}
 and hence
\begin{equation*}
f\left(C_j^2(\gamma_j)^{\bar q s}\left((\beta_j)_{r\overline{q}}+(\hat u_j)_{r\overline{q}}++\hat W_{r\bar q}(\mathrm{d}\hat u_j)\right)\right)>\sigma
\end{equation*}
for any $j>N_1$ ($N_1$ may be chosen larger if necessary), where  $\sigma\in\left(\sup_{(\mathbf{x},j)\in M\times\mathbb{N}^*}h_j ,\sup_\Gamma f\right)$ by Assertion \eqref{gaborlem91} in Lemma \ref{lem9}.
This is a contradiction to \eqref{fequj2}. Finally, we deduce that
$\lambda((v_{k\overline{\ell}}(\mathbf{z}_j)))\in \mathbb{R}^n\setminus(\Gamma+3\epsilon\mathbf{1})$
 and hence $\lambda((v_{k\overline{\ell}}(\mathbf{z}_0)))\in \mathbb{R}^n\setminus \Gamma$
by letting $\epsilon\to0.$

Now we get a non-constant Lipschitz $\Gamma$-solution $u$ since $|\nabla u|(\mathbf{0})\geq c>0$, which is a contradiction to Theorem \cite[Theorem 20]{gaborjdg}. This contradiction yields the desired \eqref{thmfirstorderwubian}.
\subsubsection*{$C^{2,\alpha}$-Estimates and Higher Order Estimates} Given \eqref{thmzeroorderwubian}, \eqref{thmsecondorderwubian} and \eqref{thmfirstorderwubian}, the $C^{2,\alpha}$ estimate for some $0<\alpha<1$ follows from the Evans-Krylov theory (see for example \cite{twwycvpde,chucvpde}). Differentiating the equations and using the Schauder theory (see for example \cite{gt1998}), we then deduce uniform a priori $C^k$ estimates for all $k\geq 0.$

This completes the proof Theorem \ref{thmkellhessianquotient}.
\end{proof}


\begin{thebibliography}{10}

\bibitem{as2}
Semyon Alesker and Egor Shelukhin.
\newblock On a uniform estimate for the quaternionic calabi problem.
\newblock {\em Israel Journal of Mathematics}, 197(1):309--327, 2013.

\bibitem{blockiscience}
Zbigniew B{\l}ocki.
\newblock On uniform estimate in {C}alabi-{Y}au theorem.
\newblock {\em Science in China Series A: Mathematics}, 4(Supplement
  1):244--247, 2005.

\bibitem{blocki09}
Zbigniew B{\l}ocki.
\newblock A gradient estimate in the {C}alabi-{Y}au theorem.
\newblock {\em Mathematische Annalen}, 344:317--327, 2009.

\bibitem{blocki2012}
Zbigniew B{\l}ocki.
\newblock On geodesics in the space of {K}\"ahler metrics.
\newblock In Lizhen Ji, Kefeng Liu, and Shing-Tung Yau, editors, {\em Advances
  in geometric analysis}, volume~21 of {\em Advanced Lectures in Mathematics
  (ALM)}, pages 3--19. Int. Press, 2012.

\bibitem{boucksombd}
S\'ebastien Boucksom.
\newblock {M}onge-{A}mp\`ere equations on complex manifolds with boundary.
\newblock In Vincent Guedj, editor, {\em Complex Monge-Amp\`ere Equations and
  Geodesics in the Space of K\"ahler Metrics}, volume 2038 of {\em Lecture
  Notes in Mathematics}, pages 257--282. Springer, Berlin, Heidelberg, 2012.

\bibitem{brendleetx2017}
Simon Brendle, Kyeongsu Choi, and Panagiota Daskalopoulos.
\newblock Asymptotic behavior of flows by powers of the {G}aussian curvature.
\newblock {\em Acta Mathematica}, 17(1):1--16, 2017.

\bibitem{ckns1985}
Luis~{\'A}ngel Caffarelli, Joseph~John Kohn, Louis Nirenberg, and Joel Spruck.
\newblock The {D}irichlet problem for nonlinear second order elliptic
  equations, \rm {II}: Complex {M}onge-{A}mp\`ere, and uniformly elliptic,
  equations.
\newblock {\em Communications in Pure and Applied Mathematics}, 38(2):209--252,
  1985.

\bibitem{cnsacta}
Luis~{\'A}ngel Caffarelli, Louis Nirenberg, and Joel Spruck.
\newblock The {D}irichlet problem for nonlinear second order elliptic
  equations,\rm {III}: Functions of the eigenvalues of the {H}essian.
\newblock {\em Acta Mathematica}, 155(1):261--301, December 1985.

\bibitem{chen2000imrn}
Xiuxiong Chen.
\newblock On the lower bound of the {M}abuchi energy and its application.
\newblock {\em International Mathematical Research Notices $($IMRN$)$},
  2000(12):607--623.

\bibitem{chen2000}
Xiuxiong Chen.
\newblock The space of {K}\"ahler metrics.
\newblock {\em Journal of Differential Geometry}, 56(2):189¨C234, 2000.

\bibitem{chen2004cag}
Xiuxiong Chen.
\newblock A new parabolic flow in {K}\"ahler manifolds.
\newblock {\em Communications in Analysis and Geometry}, 12(4):837--852, 2004.

\bibitem{chengyau1980}
Shiu-Yuen Cheng and Shing-Tung Yau.
\newblock On the existence of a complete {K}\"ahler metric on non-compact
  complex manifolds and the regularity of {F}effeman's equation.
\newblock {\em Communications on Pure and Applied Mathematics}, 33(4):507--544,
  1980.

\bibitem{ch1}
Pascal Cherrier.
\newblock {\'E}quations de {M}onge-{A}mp\`ere sur les vari\'et\'es
  {H}ermitiennes compoactes.
\newblock {\em Bulletin des sciences math\'matiques}, 111(4):343--385, 1987.

\bibitem{chouwang2001}
Kai-Seng Chou and Xu-Jia Wang.
\newblock A variational theory of the {H}essian equation.
\newblock {\em Communications on Pure and Applied Mathematics},
  54(9):1029--1064, 2001.

\bibitem{chucvpde}
Jianchun Chu.
\newblock {$C^{2,\alpha}$} regularities and estimates for nonlinear elliptic
  and parabolic equations in geometry.
\newblock {\em Calculus of Variations and Partial Differential Equations},
  55(1):Art. 8, 20pp, Feberuary 2016.

\bibitem{chuarxiv1807}
Jianchun Chu.
\newblock {$C^{1,1}$} regularity of degenerate complex {M}onge-{A}mp\`ere
  equations and some applications.
\newblock 2018.
\newblock arXiv:1807.06201.

\bibitem{ctwjems}
Jianchun Chu, Valentino Tosatti, and Ben Weinkove.
\newblock The {M}onge-{A}mp\`ere equation for non-integrable almost complex
  structures.
\newblock {\em Journal of the European Mathematical Society $($JEMS$)$},
  21(7):1949--1984, 2019.
\newblock arXiv:1603.00706.

\bibitem{collinsarxiv1903}
Tristan~C. Collins.
\newblock Concave elliptic equations and generalized khovanskii-teissier
  inequalities.
\newblock 2019.
\newblock arXiv:1903.10898.

\bibitem{collinspicardarxiv1909}
Tristan~C. Collins and Sebastien Picard.
\newblock The dirichlet problem for the $k$-hessian equation on a complex
  manifold.
\newblock 2019.
\newblock arXiv:1909.00447.

\bibitem{demaillybook1}
Jean-Pierre Demailly.
\newblock {C}omplex {A}nalytic and {D}ifferential {G}eometry.
\newblock e-book available on the author's webpage,
  https://www-fourier.ujf-grenoble.fr/~demailly/manuscripts/agbook.pdf, June
  2012.

\bibitem{dkajm}
S{\l}awomir Dinew and S{\l}awomir Ko{\l}odziej.
\newblock Liouville and {C}alabi-{Y}au type theorems for complex {H}essian
  equations.
\newblock {\em American Journal of Mathmematics}, 137(2):403--415, 2017.

\bibitem{donald1999asian}
Simon~Kirwan Donaldson.
\newblock Moment maps and diffeomorphisms.
\newblock {\em Asian Journal of Mathematics}, 3(1):1--15, 1999.

\bibitem{donaldson2012}
Simon~Kirwan Donaldson.
\newblock Holomorphic discs and the complex {M}onge-{A}mp\`ere equation.
\newblock {\em The Journal of Symplectic Geometry}, 1(2):171--196, 2002.

\bibitem{eh89}
Klaus Ecker and Gerhard Huisken.
\newblock Immersed hypersurfaces with constant {W}eingarten curvature.
\newblock {\em Mathematische Annalen}, 283(2):329--332, 1989.

\bibitem{fanglaima2011}
Hao Fang, Mijia Lai, and Xinan Ma.
\newblock On a class of fully nonlinear flows in {K}\"ahler geometry.
\newblock {\em Journal f\"ur die reine und angewandte Mathematik},
  2011(653):189¨C220, 2011.

\bibitem{fuwangwu1}
Jixiang Fu, Zhizhang Wang, and Damin Wu.
\newblock Form-type calabi-yau equations.
\newblock {\em Mathematical Research Letters}, 17(5):887--903, 2010.

\bibitem{fuwangwu2}
Jixiang Fu, Zhizhang Wang, and Damin Wu.
\newblock Form-type {C}alabi-{Y}au equations on {K}\"ahler manifolds of
  nonnegative orthogonal bisectional curvature.
\newblock {\em Calculus of Variations and Partial Differential Equations},
  52(1-2):327--344, 2015.

\bibitem{gauduchon1}
Paul Gauduchon.
\newblock Le th\'eor\`eme de l'excentricit\'e nulle.
\newblock {\em Comptes Rendus de l'Acad\'emie des Sciences Paris S\'eries A-B},
  285(5):A387--A390, 1977.

\bibitem{gauduchon2}
Paul Gauduchon.
\newblock La $1$-forme de torsion d'une vari\'et\'e hermitienne compacte.
\newblock {\em Mathematische Annalen}, 267:495--518, 1984.

\bibitem{gerhardt}
Claus Gerhardt.
\newblock Closed {W}eingarten hypersurfaces in {R}iemannian manifolds.
\newblock {\em Journal of Differential Geometry}, 43(3):612--641, 1996.

\bibitem{gt1998}
David Gilbarg and Neil~Sidney Trudinger.
\newblock {\em Elliptic Partial Differential Equations of Second Order}.
\newblock Classical in Mathematics. Springer-Verlag Berlin Heidelberg, 2001.

\bibitem{guan1998}
Bo~Guan.
\newblock The {D}irichlet problem for complex {M}onge-{A}mp\`ere equations and
  regularity of the pluri-complex {G}reen function.
\newblock {\em Communications in Analysis and Geometry}, 6(4):687¨C703, 1998.

\bibitem{guanarxiv2014}
Bo~Guan.
\newblock The {D}irichlet problem for fully nonlinear elliptic equations on
  {R}iemannian manifolds.
\newblock 2014.
\newblock arXiv:1403.2133v2.

\bibitem{guan2014}
Bo~Guan.
\newblock Second order estimates and regularity for fully nonlinear elliptic
  equations on {R}iemannain manifolds.
\newblock {\em Duke Mathematical Journal}, 163(8):1491--1524, 2014.

\bibitem{guanjiao2014}
Bo~Guan and Heming Jiao.
\newblock Second order estimates for hessian type fully nonlinear elliptic
  equations on riemannian manifolds.
\newblock {\em Calculus of Variations and Partial Differential Equations},
  54(3):2693--2712, 2015.

\bibitem{gl}
Bo~Guan and Qun Li.
\newblock Complex monge-amp\`ere equations on hermitian manifolds.
\newblock arXiv:0906.3548.

\bibitem{guanli2013advance}
Bo~Guan and Qun Li.
\newblock The {D}irichlet problem for a complex {M}onge-{A}mp\`ere type
  equation on {H}ermitian manifolds.
\newblock {\em Advances in Mathematics}, 246:351--367, 2013.

\bibitem{guanqiuyuan2019}
Bo~Guan, Chunhui Qiu, and Rirong Yuan.
\newblock Fully nonlinear elliptic equations for conformal deformations of
  {C}hern-{R}icci forms.
\newblock {\em Advances in Mathematics}, 343:538¨C566, 2019.

\bibitem{guanshisui2015}
Bo~Guan, Shujun Shi, and Zhenan Sui.
\newblock On estimates for fully nonlinear parabolic equations on {R}iemannian
  manifolds.
\newblock {\em Analysis \& PDE}, 8(5):1145--1164, 2015.

\bibitem{guansun2015}
Bo~Guan and Wei Sun.
\newblock On a class of fully nonlinear elliptic equations on {H}ermitian
  manifolds.
\newblock {\em Calculus of Variations and Partial Differential Equations},
  54(1):901¨C916, 2015.

\bibitem{ha}
Abdellah Hanani.
\newblock {\'E}quations du type de monge-amp\`ere sur les vari\'et\'es
  hermitiennes compactes.
\newblock {\em Journal of Functional Analysis}, 137(1):49--75, 10 April 1996.

\bibitem{harveylawson2011}
F.~Reese Harvey and H.~Blaine Lawson~Jr.
\newblock {D}irichlet duality and the nonlinear {D}irichlet problem on
  {R}iemannian manifolds.
\newblock {\em Journal of Differential Geometry}, 88(3):395--482, 2011.

\bibitem{houmawu}
Zuoliang Hou, Xi-Nan Ma, and Damin Wu.
\newblock A second order estimate for complex {H}essian equations on a compact
  {K}\"{a}hler manifold.
\newblock {\em Mathematical Research Letters}, 17(3):547--561, 2010.

\bibitem{Hwang}
Suk-Geun Hwang.
\newblock Cauchy's interlace theorem for eigenvalues of {H}ermitian matrices.
\newblock {\em American Mathematical Monthly}, 111(2):157--159, 2004.

\bibitem{ivochkina1980}
Nina~M. Ivochkina.
\newblock The integral method of barrier functions and the {D}irichlet problem
  for equations with operators of the {M}onge-{A}mp\`ere type (russian).
\newblock {\em Matematicheski\u{i} Sbornik. Novaya Seriya},
  112(154)(2(6)):193--206, 1980.

\bibitem{li1994}
Song-Ying Li.
\newblock On the {N}eumann problems for complex {M}onge-{A}mp\`ere equations.
\newblock {\em Indiana University Mathematics Journal}, 43(4):1099--1122, 1994.

\bibitem{lisongying2004}
Song-Ying Li.
\newblock On the {D}irichlet problems for symmetric function equations of the
  eigenvalues of the complex {H}essian.
\newblock {\em Asian Journal of Mathematics}, 8(1):87--106, 2004.

\bibitem{phongsongsturm2012}
Duong~Hong Phong, Jian Song, and Jacob Sturm.
\newblock Complex {M}onge-{A}mp\`ere equations.
\newblock In Huai-Dong Cao and Shing-Tung Yau, editors, {\em Surveys in
  differential geometry}, volume~17, pages 327--410. Int. Press, 2012.

\bibitem{ps09}
Duong~Hong Phong and Jacob Sturm.
\newblock The {D}irichlet problem for degenerate complex {M}onge-{A}mp\`ere
  equations.
\newblock {\em Communications in Analysis and Geometry}, 18(1):145--170, 2010.

\bibitem{popovici}
Dan Popovici.
\newblock Aeppli cohomology classes associated with gauduchon metrics on
  compact complex manifolds.
\newblock {\em Bulletion De La Soci\'{e}t\'{e} Math\'{e}matique De France},
  143:7763--800, 2015.

\bibitem{richter}
Von~Hans Richter.
\newblock Zur absch\"{a}tzung von matrizennormen.
\newblock {\em Mathematische Nachrichten}, 18(1-6):178--187, 1958.

\bibitem{songweinkove2008}
Jian Song and Benjamin Weinkove.
\newblock On the convergence and singularities of the ${J}$-flow with
  applications to the {M}abuchi energy.
\newblock {\em Communications on Pure and Applied Mathematics}, 61(2):210¨C229,
  2008.

\bibitem{spruck}
Joel Spruck.
\newblock Geometric aspects of the theory of fully nonlinear elliptic
  equations.
\newblock In David Hoffman, editor, {\em Global Theory of Minimal Surfaces},
  volume~2 of {\em Clay Mathematics Proceedings}, pages 283--309. the American
  Mathematical Society and Clay Mathematics Institute, 2005.

\bibitem{sun2017}
Wei Sun.
\newblock On a class of fully nonlinear elliptic equations on closed
  {H}ermitian manifolds {II}: ${L}^\infty$ estimate.
\newblock {\em Communications on Pure and Applied Mathematics}, 70(1):172--199,
  2017.

\bibitem{gaborjdg}
G{\'a}bor Sz{\'e}kelyhidi.
\newblock Fully non-linear elliptic equations on compact {H}ermitian manifolds.
\newblock {\em Journal Differential Geometry}, 109(2):337--378, 2018.
\newblock arXiv:1501.02762.

\bibitem{stw1503}
G{\'a}bor Sz\'ekelyhidi, Valention Tosatti, and Ben Weinkove.
\newblock {G}auduchon metrics with prescribed volume form.
\newblock {\em Acta Mathematica}, 219(1):181--211, 2017.
\newblock arXiv:1503.04491.

\bibitem{tosatticag2007}
Valentino Tosatti.
\newblock A general {S}chwarz lemma for almost-{H}ermitian manifolds.
\newblock {\em Communications in Analysis and Geometry}, 15(5):1063--1086,
  2007.

\bibitem{twwycvpde}
Valentino Tosatti, Yu~Wang, Ben Weinkove, and Xiaokui Yang.
\newblock ${C}^{2,\alpha}$ estimates for nonlinear elliptic equations in
  complex and almost complex geometry.
\newblock {\em Calculus of Variations and Partial Differential Equations},
  54(1):431--453, September 2015.

\bibitem{twjams10}
Valentino Tosatti and Ben Weinkove.
\newblock The complex {M}onge-{A}mp{\`e}re equation on compact {H}ermitian
  manifolds.
\newblock {\em Journal of the American Mathematical Society}, 23(4):1187--1195,
  2010.

\bibitem{twasian}
Valentino Tosatti and Ben Weinkove.
\newblock Estimates for the complex monge-amp\`ere equation on hermitian and
  balanced manifolds.
\newblock {\em Asian Journal of Mathematics}, 14(1):19--40, 2010.
\newblock arXiv:0909.4496.

\bibitem{twjams}
Valentino Tosatti and Ben Weinkove.
\newblock The {M}onge-{A}mp{\`e}re equation for $(n-1)$-plurisubharmonic
  functions on a compact {K}{\"a}hler manifold.
\newblock {\em Journal of the American Mathematical Society}, 30(2):311--346,
  2017.
\newblock arXiv:1305.7511.

\bibitem{twarxiv1906}
Valentino Tosatti and Ben Weinkove.
\newblock The complex {M}onge-{A}mp\`ere equation with a gradient term.
\newblock {\em to appear in Pure and Applied Mathematics Quarterly}, 2019.
\newblock arXiv:1906.10034.

\bibitem{twcrelle}
Valentino Tosatti and Ben Weinkove.
\newblock Hermitian metrics, $(n-1,n-1)$-forms and {M}onge-{A}mp\`ere
  equations.
\newblock {\em to appear in Journal f\"ur die reine und angewandte Mathematik
  $($Crelle's Journal$)$}, 2019(755):67--101, 2019.

\bibitem{trudinger1995}
Neil~Sidney Trudinger.
\newblock On the {D}irichlet problem for {H}essian equations.
\newblock {\em Acta Mathematica}, 175(2):151--164, 1995.

\bibitem{wang1994}
Xu-Jia Wang.
\newblock A class of fully nonlinear elliptic equations and related
  functionals.
\newblock {\em Indiana University Mathematics Journal}, 43(1):25--54, 1994.

\bibitem{yau1978}
Shing-Tung Yau.
\newblock On the {R}icci curvature of a compact {K}\"ahler manifold and the
  complex {M}onge-{A}mp\`ere equation,\textrm{I}.
\newblock {\em Communications on Pure and Applied Mathematics}, 31(3):339--411,
  1978.

\bibitem{yuancjm18}
Rirong Yuan.
\newblock On a class of fully nonlinear elliptic equations containing gradient
  terms on compact {H}ermitian manifolds.
\newblock {\em Canadian Journal of Mathematics}, 70(4):943--960, 2018.

\bibitem{yuanarxiv2020}
Rirong Yuan.
\newblock Regularity of fully non-linear elliptic equations on hermitian
  manifolds. {II}.
\newblock 2020.
\newblock arXiv:2001.09238.

\bibitem{yuancjm2018}
Riyong Yuan.
\newblock On a class of fully nonlinear elliptic equations containing gradient
  terms on compact {H}ermitian manifolds.
\newblock {\em Canadian Journal of Mathematics}, 70(4):943--960, 2018.

\bibitem{zhangpjm}
Dekai Zhang.
\newblock Hessian equations on closed {H}ermitian manifolds.
\newblock {\em Pacific Journal of Mathematics}, 291(2):485--510, 2017.

\bibitem{zhangimrn2010}
Xiangwen Zhang.
\newblock A priori estimates for complex {M}onge-{A}mp\`ere equation on
  {H}ermitian manifolds.
\newblock {\em International Mathematics Research Notices. IMRN},
  2010(19):3814--3836, 2010.

\bibitem{zhengimrn}
Tao Zheng.
\newblock A parabolic {M}onge-{A}mp\`ere type equation of {G}auduchon metrics.
\newblock {\em International Mathematics Research Notice. IMRN},
  2019(17):5497--5538, 2019.

\end{thebibliography}
\end{document}